\title{On approximate controllability of generalized KdV solitons}
\author{Claudio Mu\~noz\thanks{Laboratoire de Math\'ematiques d'{}Orsay, B\^at. 425, Universit\'e Paris-Sud  91405 Orsay Cedex
France, email: \texttt{claudio.munoz@math.u-psud.fr}}}
\begin{document}
\maketitle

\begin{abstract}
We consider the approximate control of solitons in generalized Korteweg-de Vries equations. By introducing a suitable internal bilinear control on the equation, we prove that any soliton is approximate null controllable, and moreover, any soliton can be accelerated to any particular positive velocity, after a suitable large amount of time. Precise estimates on the error terms and the rate of decay in the approximate null controllability result are also given. Our method introduces a new insight on the control of nonlinear objects, from the point of view of interaction and collision problems for nonlinear dispersive equations, recently developed by Y. Martel and F. Merle \cite{MMcol1,MMcol3}. It can be applied in principle, to several other models with soliton solutions.
\end{abstract}

%\email{}
\date{\today}
\begin{AMS}
Primary 35Q51, 35Q53; Secondary 37K10, 37K40
\end{AMS}
\begin{keywords}
gKdV equation, stabilization, approximate controllability, soliton
\end{keywords}
%\thanks{}

\pagestyle{myheadings}
\thispagestyle{plain}
 \markboth{Control of solitons} {Claudio Mu\~noz}

%%%%%%%%%%%%%%%%%%%%%%%%%%%%%%%%%%%%%%%%%%%%%%%%%%%%%%%%%

%    Some definitions useful in producing this sort of documentation:
\chardef\bslash=`\\ % p. 424, TeXbook
%    Normalized (nonbold, nonitalic) tt font, to avoid font
%    substitution warning messages if tt is used inside section
%    headings and other places where odd font combinations might
%    result.
\newcommand{\ntt}{\normalfont\ttfamily}
%    command name ? Q ? 2 

\newcommand{\cn}[1]{{\protect\ntt\bslash#1}}
%    LaTeX package name
\newcommand{\pkg}[1]{{\protect\ntt#1}}
%    File name
\newcommand{\fn}[1]{{\protect\ntt#1}}
%    environment name
\newcommand{\env}[1]{{\protect\ntt#1}}
\hfuzz1pc % Don't bother to report overfull boxes if overage is < 1pc

%       Theorem environments

%% \theoremstyle{plain} %% This is the default
\newtheorem{thm}{Theorem}[section]
\newtheorem{cor}[thm]{Corollary}
\newtheorem{lem}[thm]{Lemma}
\newtheorem{prop}[thm]{Proposition}
\newtheorem{ax}{Axiom}
\newtheorem{rem}{Remark}[section]
\newtheorem{Cl}{Claim}

\numberwithin{equation}{section}

\newcommand{\secref}[1]{\S\ref{#1}}
\newcommand{\lemref}[1]{Lemma~\ref{#1}}

%\newcommand{\bysame}{\mbox{\rule{3em}{.4pt}}\,}

%       Math definitions

%
%\newcommand{\A}{\mathcal{A}}
%\newcommand{\B}{\mathcal{B}}
%\newcommand{\st}{\sigma}
%\newcommand{\X}{\mathcal{X}}
%\newcommand{\wt}{\widetilde}
%\newcommand{\wh}{\widehat}

%\newcommand{\equ}[1]{(\ref{#1})}
%\newcommand{\ds}[1]{\displaystyle{#1}}
%\newcommand{\ind}{1\hspace{-NUMBERmm}{1}}
\newcommand{\R}{\mathbb{R}}
\newcommand{\N}{\mathbb{N}}
\newcommand{\Z}{\mathbb{Z}}
\newcommand{\T}{\mathbb{T}}
\newcommand{\Q}{\mathbb{Q}}
\newcommand{\Com}{\mathbb{C}}
\newcommand{\la}{\lambda}
\newcommand{\pd}{\partial}
\newcommand{\wqs}{{Q}_c}
\newcommand{\ys}{y_c}
\newcommand{\al}{\alpha}
\newcommand{\bt}{\beta}
\newcommand{\ga}{\gamma}
\newcommand{\de}{\delta}
\newcommand{\te}{\theta}
\newcommand{\lss}{\lesssim}
\newcommand{\gss}{\grtsim}
\newcommand{\arctanh}{\operatorname{arctanh}}
\newcommand{\spawn}{\operatorname{span}}
\newcommand{\sech}{\operatorname{sech}}
\newcommand{\dist}{\operatorname{dist}}
\newcommand{\re}{\operatorname{Re}}
\newcommand{\ima}{\operatorname{Im}}
\newcommand{\diam}{\operatorname{diam}}
\newcommand{\pv}{\operatorname{pv}}
\newcommand{\sgn}{\operatorname{sgn}}
\newcommand{\vv}[1]{\partial_x^{-1}\partial_y{#1}}

\newcommand{\Lp}{\mathcal{L}_{+,\ve}}
\newcommand{\Lm}{\mathcal{L}_{-,\ve}}
\def\bm{\left( \begin{array}{cc}}
\def\endm{\end{array}\right)}
\def\YY{{\mathcal Y}}

% \newcommand{\Dom}{\operatorname{Dom}}
% \newcommand{\Rec}{\operatorname{Rec}}
% \newcommand{\Int}{\operatorname{Int}}
% \newcommand{\Tr}{\operatorname{tr}}
%\left[ \left(  \newcommand{\meas}{\operatorname{meas}}
% \newcommand{\vol}{\operatorname{vol}}
 \providecommand{\abs}[1]{\lvert#1 \rvert}
 \providecommand{\norm}[1]{\lVert#1 \rVert}
 \newcommand{\sublim}{\operatornamewithlimits{\longrightarrow}}
 \newcommand{\ex}{{\bf Example}:\ }
\newcommand{\ve}{\varepsilon}
\newcommand{\fin}{\hfill$\blacksquare$\vspace{1ex}}

\newcommand{\be}{\begin{equation}}
\newcommand{\ee}{\end{equation}}
\newcommand{\ba}{\begin{equation*}}
\newcommand{\ea}{\begin{equation*}}
\newcommand{\bea}{\begin{eqnarray}}
\newcommand{\eea}{\end{eqnarray}}
\newcommand{\bee}{\begin{eqnarray*}}
\newcommand{\eee}{\end{eqnarray*}}
\newcommand{\ben}{\begin{enumerate}}
\newcommand{\een}{\end{enumerate}}
\newcommand{\nonu}{\nonumber}
\newcommand{\ds}{\displaystyle}

%\DeclareMathOperator{\per}{per} \DeclareMathOperator{\cov}{cov}
%\DeclareMathOperator{\non}{non} \DeclareMathOperator{\cf}{cf}
%\DeclareMathOperator{\add}{add} \DeclareMathOperator{\Cham}{Cham}
%\DeclareMathOperator{\IM}{Im}
%\DeclareMathOperator{\esssup}{ess\,sup}
%\DeclareMathOperator{\meas}{meas} \DeclareMathOperator{\seg}{seg}

%    \interval is used to provide better spacing after a [ that
%    is used as a closing delimiter.
\newcommand{\interval}[1]{\mathinner{#1}}

%    Notation for an expression evaluated at a particular condition. The
%    optional argument can be used to override automatic sizing of the
%    right vert bar, e.g. \eval[\biggr]{...}_{...}
\newcommand{\eval}[2][\right]{\relax
  \ifx#1\right\relax \left.\fi#2#1\rvert}

%    Enclose the argument in vert-bar delimiters:
\newcommand{\envert}[1]{\left\lvert#1\right\rvert}
\let\abs=\envert

%    Enclose the argument in double-vert-bar delimiters:
\newcommand{\enVert}[1]{\left\lVert#1\right\rVert}
\let\norm=\enVert

\renewcommand{\sectionmark}[1]{}
%\tableofcontents
%%%%%%%%%%%%%%%%%%%%%%%%%%%%%%%%%%%%%%%%%%%%%%%%%%%%%%%%%%
%%%%%%%%%%%%%%%%%%%%%%%%%%%%%%%%%%%%%%%%%%%%%%%%%%%%%%%%%%

\section{Introduction}

\medskip

In this paper we consider the problem of controlling solitons of subcritical, generalized Korteweg-de Vries equations (gKdV). More precisely, we look for an \emph{internal control} $f=f(t,x)$ applied to modify the dynamics of generalized \emph{solitons} of the equation
\be\label{gKdV}
u_{t}  +  (u_{xx} + u^p)_x = f, \quad p=2,3 \hbox{ or } 4.
\ee
Here $u=u(t,x)$ is a real-valued function, and $(t,x)\in \R^2$.  When $p=2$ and $f\equiv 0$ (\ref{gKdV}) is the well-known integrable Korteweg-de Vries equation (KdV).

\medskip

Additionally, if $f\equiv 0$, equation (\ref{gKdV}) is a standard gKdV equation. It has special solitary wave solutions called \emph{solitons},\footnote{Strictly speaking, we should say \emph{solitary waves} instead of solitons, but since we are not dealing with integrability issues, we will adopt the denomination soliton. This misunderstanding has spread out through the dispersive models community.} of the form
\be\label{Sol}
u(t,x) = Q_c (x-ct), \quad Q_c(s) := c^{\frac 1{p-1}} Q(\sqrt{c} s), \quad c>0,
\ee
with
\be\label{Q}
Q (s):= \Bigg[\frac{p+1}{2\cosh^2 (\frac 12(p-1)s)} \Bigg]^{\frac1{p-1}}.
\ee
The parameter $c>0$ is usually denoted as the \emph{scaling}, or in a equivalent way, as the \emph{velocity} of the soliton. Inserting the previous profile in (\ref{gKdV}) (recall that $f\equiv 0$), one has that $Q_c>0$ satisfies the nonlinear ODE
\be\label{ecQc}
Q_c'' -c\, Q_c +Q_c^p=0, \quad Q_c\in H^1(\R).
\ee
Moreover, standard conservation laws for (\ref{gKdV}) at the $H^1$-level are the \emph{mass}
\be\label{M}
M[u](t)  :=  \frac 12 \int_\R u^2(t,x)dx = M[u](0),
\ee 
and \emph{energy} 
\be\label{E}
E[u](t)  :=  \frac 12 \int_\R u_x^2(t,x)dx -\frac 1{p+1} \int_\R u^{p+1}(t,x)dx = E[u](0).
\ee 
A satisfactory Cauchy theory is also present at the $H^1$ level of regularity, see e.g. Kenig-Ponce-Vega \cite{KPV}. The condition on $p$ is necessary to get global existence for general $H^1$-data, cf. the paper by Martel and Merle \cite{MMblow} for the critical case $p=5$. When $p>5$, solitons are unstable \cite{BSS}.

\medskip

The control problem for the non inviscid KdV equation in a \emph{finite} length interval has been extensively studied in the last twenty years, starting from the works of Zhang \cite{BYZ}, Russell and Zhang in \cite{RZ,RZ1}, and \cite{RZ2} for a system with periodic boundary conditions and with an internal control. For the case of a boundary control, see \cite{RZ2} and  \cite{Sun}. Concerning the non periodic framework, Rosier studied \cite{R} the controllability of the KdV equation posed on a finite interval of $(0,L)$, under homogeneous Dirichlet boundary conditions and a control acting on the Neumann data at the right end-point of the interval. In particular, Rosier showed that if the length $L$ does not belong to a set of critical values, both the associated linear and the nonlinear systems are exactly controllable. When $L$ is critical, the linear system is not controllable because of the existence of a finite-dimensional subspace of unreachable states. In this case, the exact controllability of the KdV equation, in the case of critical domains, has been proven by Coron-Cr\'epeau \cite{CC}, Cerpa \cite{C}, and Cerpa-Cr\'epeau \cite{C1}. Concerning the exact boundary control problem in the half-line, see e.g. the work of Rosier \cite{R1}.

\medskip

In this paper, unlike the previous results, we are interested in the study of a control problem associated to a given gKdV soliton posed on the real line. The main motivation of our problem will come from the fact that usual techniques from control theory cannot handle some controllability problems posed in unbounded domains, and even worse, the emergency of very particular \emph{nonlinear} solutions cannot be treated using just linear techniques.  

\medskip

Let us explain in more detail  the problem. Given an initial datum $u(t_0,x) =u_0(x)= Q_c(x-ct_0-x_0)$ of soliton type, our objective is to introduce a control $f$ in the gKdV equation (\ref{gKdV}), during an interval of time $[0,T]$, with the purpose of accelerating the soliton to a new soliton state, with a different (positive) velocity. With no loss of generality,  we can assume $t_0=x_0=0$ and that the initial velocity satisfies $c=1$. In other words, our goal is to determine sufficient conditions on $f$ to ensure that, given any final scaling $c_f>0$, the system (\ref{gKdV}) with initial datum $u_0$ evolves to a soliton of the form $Q_{c_f}$, up to some small error terms, in a suitable time of interaction $T>0$. Moreover, we also want to estimate the position of the soliton, compared with the theoretically expected position $\sim c_f T$. 

\medskip

We will assume that the interior control $f$ is given by the  bilinear control (or feedback law)
\[
f(t,x) = a(t,x) u(t,x),
\]
with  $a$ an internal potential satisfying the a priori assumptions
\be\label{a}
a(t,\cdot) \in  C^3(\R) \cap L^2(\R) \cap L^\infty(\R).
\ee 
In other words, our control will use some explicit information of the soliton solution at each time, such as the \emph{scaling} and \emph{position} parameters. This problem has been also considered in a more physical context by Kaup-Newell \cite{KN1}, Grimshaw \cite{Gr1}, Ko-Kuehl \cite{KK}, and Lochak \cite{Lo}.

\medskip

Therefore, in what follows, we consider the initial value problem
\be\label{CP}
\begin{cases}
 u_t + (u_{xx} + u^p)_x =a(t,x)u \; \hbox{ in }\, \R, \quad p=2,3,4, \\
 u(0,x) = Q(x),
\end{cases}
\ee
 where $a$ is an unknown control. Our first result states that any gKdV soliton is approximate null controllability for sufficiently large time.

\medskip

\begin{thm}\label{Cor1}
Any gKdV soliton is approximate null-controllable in large time. More precisely, fix $\delta_0>0$ small. There is $\delta_1=\delta_1(\delta_0)>0$ small such that for all $0<\delta<\delta_1$, the following holds. There exist a time $T=T_\delta>0$, and a smooth control $a=a_\delta(t,Ê\cdot) \in L^\infty(\R) \cap L^2(\R)$, defined in $[0,T]$,  such that the unique solution $u=u_\delta(t)$ of (\ref{gKdV}) in $C([0,T], H^1(\R))$, with initial condition $u_0(x) =Q(x)$, satisfies
\[
\|u(T)\|_{H^1(\R)} \leq \delta.
\] 
Finally, one has $T\sim \delta^{-2(1+\delta_0)}$ and $\sup_t \|a_\delta(t)\|_{L^2\cap L^\infty} \lesssim \delta^2$.
\end{thm}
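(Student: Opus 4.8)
The plan is to realize null-controllability by \emph{decelerating} the soliton, i.e.\ by using the control to drive the scaling parameter $c(t)$ from $c(0)=1$ down to a value that is polynomially small in $\delta$. The point is that for the subcritical exponents $p\in\{2,3,4\}$ a direct computation gives
\[
\|Q_c\|_{L^2}^2 = c^{\frac{2}{p-1}-\frac12}\|Q\|_{L^2}^2,\qquad \|\partial_x Q_c\|_{L^2}^2 = c^{\frac{2}{p-1}+\frac12}\|Q'\|_{L^2}^2,
\]
and \emph{both} exponents are strictly positive precisely when $p<5$. Hence $\|Q_c\|_{H^1}\sim c^{\sigma_p/2}$ with $\sigma_p:=\tfrac{2}{p-1}-\tfrac12>0$, so reaching a final scaling $c(T)\sim\delta^{\kappa}$ with $\kappa=\kappa(p)>0$ large enough that $\|Q_{c(T)}\|_{H^1}\le\tfrac12\delta$ automatically produces a final state of $H^1$-size $\lesssim\delta$. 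The entire problem is thereby reduced to designing a small bilinear potential $a(t,x)$ that slowly and \emph{stably} bleeds mass out of the soliton while keeping the solution on the soliton manifold.

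I would take $a(t,x)$ to be a dissipative, localized potential of strength $\sim\delta^2$, built from a fixed smooth nonnegative bump $\varphi$ centered (via the unknown modulation parameters) at the instantaneous soliton position and roughly adapted to its width, arranged so that $\sup_t\|a(t)\|_{L^2\cap L^\infty}\lesssim\delta^2$. The favorable effect is exact at the level of the mass \eqref{M}: since the conservative part of \eqref{gKdV} preserves $M$,
\[
\frac{d}{dt}M[u](t) = \int_\R a(t,x)\,u^2(t,x)\,dx \le 0 ,
\]
so that choosing $a\le0$ forces the mass, and hence the scaling $c(t)$, to decrease monotonically. The quantitative content is that, along a solution staying close to the soliton, this negative mass flux has exactly the right size to push $c(t)$ down at the prescribed rate.

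Concretely, I would write $u(t,x)=Q_{c(t)}(x-\rho(t))+w(t,x)$, impose the two standard orthogonality conditions on $w$ against the scaling and translation directions of the linearized operator to define $c(t)$ and $\rho(t)$, and derive the modulation system. Inserting the mass identity into $\tfrac{d}{dt}M[Q_{c}]=\tfrac12\sigma_p c^{\sigma_p-1}\|Q\|_{L^2}^2\,c'(t)$ produces an ODE of the form $c'(t)\approx -C\,\delta^2\,c(t)^{\tau}$, with $\tau=\tau(p)\ge1$ and with error terms controlled by $\|w\|_{H^1}$. Integrating this from $c(0)=1$ pins down the relation between the control strength $\delta^2$, the exit value $c(T)\sim\delta^\kappa$, and the total time; this is what yields $T\sim\delta^{-2(1+\delta_0)}$, the extra margin $\delta_0$ absorbing the logarithmic/polynomial losses incurred both in the integration and in the balance between adapting the width of $a$ and keeping $\|a\|_{L^2}\lesssim\delta^2$ as $c\to0$.

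The hard part will be controlling the error $w$ uniformly over the \emph{very long} interval $[0,T]$ with $T\sim\delta^{-2(1+\delta_0)}$, on which a naive Gronwall argument is hopeless. To handle this I would pass to the rescaled variables $y=\sqrt{c(t)}(x-\rho(t))$, in which the reference profile is the fixed soliton $Q$ and the coercivity of the linearized operator $\mathcal L$ on the subspace cut out by the orthogonality conditions is \emph{uniform} — this is exactly the subcritical orbital stability of Weinstein and Benjamin--Bona--Weinstein, and it neutralizes the degeneration of constants as $c\to0$. Combining this coercivity with a Martel--Merle almost-monotonicity estimate for suitably localized mass/energy functionals, I expect to close a bootstrap $\|w(t)\|_{H^1}\lesssim\delta^{1+\theta}$ on all of $[0,T]$, the smallness of the control ($\sim\delta^2$) guaranteeing that the forcing it generates on $w$ stays within this budget. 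The delicate point is the bookkeeping that ensures the error neither overwhelms the monotone mass dissipation nor accumulates over the long time, which is where the real work lies.
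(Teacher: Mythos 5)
Your opening reduction is exactly the one the paper uses: since $\|Q_c\|_{H^1}\sim c^{\frac{5-p}{4(p-1)}}\to 0$ as $c\to 0$ for $p<5$, approximate null controllability follows once the control can drive the scaling from $1$ down to a value polynomially small in $\delta$, with an error at most $\delta/2$. The paper implements this in two lines by invoking Theorem \ref{MT} with $c_f\lss \delta^{4(p-1)/(5-p)}$ and $\ve\lss\delta^2$; you instead sketch the deceleration result from scratch, and it is there that a genuine gap appears. With the pure modulated-soliton ansatz $u=Q_{c(t)}(\cdot-\rho(t))+w$, the residual of the equation (after removing the modulation directions) is of size $O(\ve)=O(\delta^2)$ in $H^1$, localized only through the slowly varying factor $a_0'(\ve\rho(t))\sim e^{-\gamma\ve|\rho(t)|}$. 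Its contribution to the energy/Lyapunov functional over $[0,T]$ with $T\sim\ve^{-1-\delta_0}$ is $\int_0^T\ve e^{-\gamma\ve|\rho(s)|}\,ds=O(1)$, not $o(1)$, so the bootstrap closes only at the level $\|w\|_{H^1}=O(1)$ --- useless for the conclusion. This is precisely why the paper constructs the first-order corrector $A_c$ (the bounded, non-$L^2$ dispersive-tail profile solving $(\mathcal L A_c)_y=\tilde F_1$ in Lemma \ref{lem:omega}, then cut off), which lowers the residual to $O(\ve^{3/2}e^{-\gamma\ve|\rho(t)|})$ as in (\ref{SSS}) and hence the accumulated error to $O(\sqrt\ve)=O(\delta)$; the paper states explicitly that without this term a bound like (\ref{MT1}) is ``highly unlikely''. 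Your claimed bound $\|w\|_{H^1}\lss\delta^{1+\theta}$ is stronger than what even the full machinery with the corrector delivers ($O(\delta)$, which is already borderline for the triangle inequality), and nothing in your proposal --- orthogonality conditions, rescaled coercivity, Martel--Merle monotonicity --- substitutes for the corrector, since the obstruction is the size of the forcing itself, not the control of its projection onto the soliton directions.

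Two secondary points. First, your mass-flux derivation of $c'(t)\approx-C\delta^2 c^\tau$ is heuristically consistent with the paper's modulation equation $c'\sim\ve f_1^0=-\ve\la_p a_0'(\ve\rho)c^{p/(p-1)}$, but the paper obtains $f_1$ from an orthogonality condition in the construction of the approximate solution (Proposition \ref{prop:decomp}), not from the mass identity; the mass identity alone does not determine $\rho'$ nor the error equation. Second, driving $c(T)$ all the way to $\delta^\kappa$ makes every constant in the scheme ($c_m$, the coercivity constant, the traversal time of the control's support at speed $\sim c$) degenerate; you flag the coercivity issue but not the time-of-traversal one, which threatens the claimed $T\sim\delta^{-2(1+\delta_0)}$. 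The paper's route --- prove the fixed-$c_f$ statement first and only then specialize $c_f$ in terms of $\delta$ --- is how this bookkeeping is organized, and your proposal would need an explicit uniformity-in-$c_f$ argument to replace it.
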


\medskip

As far as we know, this is the first (partial) result on controllability of \emph{solitons} in unbounded domains, where dispersion plays a key role in the dynamics. Previous results are related to the study of the \emph{ground state} of the linear and the nonlinear problem in a finite interval, see e.g. the works of Lange and Teismann \cite{LT}, Beauchard and Mirrahimi \cite{BM}, Mirrahimi \cite{Mi}, or the control of a quantum particle under the action of a well shaped potential, obeying the linear Schr\"odinger equation in a bounded interval (Beauchard-Coron \cite{BC}), and Cr\'epeau \cite{Cre} in the KdV case. In this paper we study a nonlinear object instead; localized solitons are present due the non compact character of the domain, and the strength of the nonlinearity.

\medskip

It turns out that Theorem \ref{Cor1} is consequence of the following deeper result, a large time \emph{approximate controllability} of the initial soliton $Q$ of scaling one, to any final scaling $c_f>0$, $c_f\neq 1$ (the case $c_f=1$ is trivial). As previously stated, we pick {\bf any $\delta_0>0$ small, but fixed}.

\medskip

\begin{thm}\label{MT} Let $c_f>0$, $c_f\neq 1$. There exists $\ve_0(\delta_0,c_f)>0$ such that, for all $0<\ve<\ve_0$ the following holds. There exist a time $T=T_\ve>0$, a smooth in time and space control $a=a_\ve(t,Ê\cdot) \in L^\infty(\R) \cap L^2(\R)$ and a smooth translation parameter $\rho(t)$, both defined in $[0,T]$, and such that the unique solution $u=u_\ve(t)$ of (\ref{gKdV}) in $C([0,T], H^1(\R))$, with initial condition $u_0(x) =Q(x)$, satisfies  
\be\label{MT1}
\|u(T)- Q_{c_f}(\cdot - \rho(T)) \|_{H^1(\R)} + |\rho'(T)- c_f | \lesssim \sqrt{\ve}.
\ee
Finally, one has $T\sim \ve^{-1-\delta_0}$ and $\sup_t \|a_\ve(t)\|_{L^2\cap L^\infty} \lesssim \ve$.
\end{thm}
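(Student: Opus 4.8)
The plan is to realize the acceleration of the soliton by choosing the control $a_\ve(t,x)$ to be a small, slowly varying potential localized near the soliton, so that on each bounded time interval the solution behaves like a modulated soliton whose scaling parameter is slowly pushed from $1$ toward $c_f$. Concretely, I would look for a solution of the form $u(t,x)=Q_{c(t)}(x-\rho(t))+w(t,x)$, where $c(t)$ is a modulation of the scaling, $\rho(t)$ is the position, and $w$ is a small remainder orthogonal to the soliton's two-dimensional tangent space (spanned by $\partial_c Q_{c}$ and $\partial_x Q_{c}$). The key is to compute how the control term $a\,u$ feeds into the evolution of $c(t)$: plugging the ansatz into \eqref{gKdV} and projecting onto the direction $\partial_c Q_c$ should yield a modulation equation of the schematic form $c'(t) \sim \langle a\,Q_c,\partial_c Q_c\rangle$, so that a well-chosen $a$ produces a prescribed, slow increase (or decrease) of $c$. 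The natural choice is to take $a$ proportional to a fixed cutoff profile centered at $\rho(t)$ with amplitude $O(\ve)$, which explains both the bound $\sup_t\|a_\ve(t)\|_{L^2\cap L^\infty}\lesssim\ve$ and the long time $T\sim\ve^{-1-\delta_0}$: since $c$ moves at rate $O(\ve)$, reaching a fixed target change $|c_f-1|$ requires time $\sim\ve^{-1}$, and the extra $\delta_0$ absorbs logarithmic or error-accumulation losses.

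The heart of the argument is a bootstrap/modulation estimate controlling the remainder $w$ uniformly on $[0,T]$. I would set up modulation parameters $(c(t),\rho(t))$ by imposing the orthogonality conditions $\langle w(t),\partial_c Q_{c(t)}(\cdot-\rho(t))\rangle = \langle w(t),\partial_x Q_{c(t)}(\cdot-\rho(t))\rangle = 0$, derive the ODE system for $c'$ and $\rho'$ via the implicit function theorem, and then control $\|w(t)\|_{H^1}$ using a modulated energy-virial functional adapted to the coercivity properties of the linearized operator $\mathcal L_c = -\partial_x^2 + c - pQ_c^{p-1}$ around the soliton. The subcritical hypothesis $p=2,3,4$ is exactly what guarantees the needed coercivity (up to the two null/negative directions, which are neutralized by the orthogonality conditions and by a conserved-quantity correction), so that a Lyapunov functional $\mathcal F[w] \sim \|w\|_{H^1}^2$ can be shown to satisfy $\frac{d}{dt}\mathcal F \lesssim \ve\,\mathcal F + \ve^2$, giving $\|w(t)\|_{H^1}\lesssim\sqrt\ve$ on the whole interval by Gronwall, as required in \eqref{MT1}. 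The decomposition into soliton plus small remainder persists by a standard continuity (bootstrap) argument as long as $w$ stays small.

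I expect the main obstacle to be the simultaneous control of the modulation equation for $c(t)$ over the very long time scale $T\sim\ve^{-1-\delta_0}$ while keeping $w$ of size only $\sqrt\ve$. The difficulty is that over a time of order $\ve^{-1}$ the quadratic error terms $O(\ve^2)$ in $\frac{d}{dt}\mathcal F$ integrate up to $O(\ve)$, which is consistent with $\|w\|_{H^1}^2\lesssim\ve$, but this is a borderline balance: one must check that the effective growth rate in the Gronwall inequality, namely the coefficient $\ve$ multiplying $\mathcal F$, does not cause exponential blow-up $e^{\ve T}=e^{\ve^{-\delta_0}}$ of the remainder. This is precisely where the cushion $\delta_0>0$ and the careful design of $a_\ve$ (making it as localized and as slowly varying as possible, so that the coupling to $w$ is better than $O(\ve)$ in the dangerous direction) must be used. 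A secondary technical point is verifying that as $c(t)$ sweeps across the interval of scalings the coercivity constant of $\mathcal L_{c(t)}$ stays uniformly bounded below, and that the error terms arising from the time-dependence of the profile $Q_{c(t)}$ (the $c'\,\partial_c Q_c$ contributions) remain of the claimed order; these are handled by the subcriticality and by the smallness of $c'=O(\ve)$.
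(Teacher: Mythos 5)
Your skeleton (modulated soliton plus remainder, orthogonality conditions via the implicit function theorem, a Lyapunov functional with coercivity from $\mathcal L_c$, and a bootstrap over $[0,T]$) matches the paper's strategy, but the proposal is missing the single ingredient the paper singles out as indispensable: the first-order corrector. With the bare ansatz $u=Q_{c(t)}(x-\rho(t))+w$, the residual of the equation contains the term $\ve a_0'(\ve\rho)Q_c^2$ (and its relatives), which is of size $O(\ve)$ in $H^1$ and \emph{linear} as a source for $w$ — not the $O(\ve^2)$ quadratic error you assert in $\frac{d}{dt}\mathcal F\lss \ve\mathcal F+\ve^2$. The energy inequality then reads $\mathcal F'\lss \ve e^{-\ga\ve|\rho(t)|}\|w\|_{H^1}+\dots$, and since $\int_0^T\ve e^{-\ga\ve|\rho(s)|}ds\sim 1$, integration gives only $\|w\|^2\lss\sup\|w\|$, which closes at no positive power of $\ve$. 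The paper removes this obstruction by constructing the corrector $A_c$ solving $(\mathcal L A_c)_y=\tilde F_1$ (Lemma \ref{lem:omega}) — a bounded but non-$L^2$ profile modeling the dispersive tail, subsequently cut off — which pushes the residual down to $O(\ve^{3/2}e^{-\ga\ve|\rho(t)|})$ as in (\ref{SSS}); only then does the time-integrated source become $O(\sqrt\ve)\sup\|w\|$ and the bootstrap close at $\|w\|\lss\sqrt\ve$. Relatedly, your "main obstacle" ($e^{\ve T}$ Gronwall growth) is real but is not resolved by the cushion $\delta_0$: it is resolved because the Gronwall coefficient is $\ve e^{-\ga\ve|\rho(s)|}$ with $\rho'\geq c_m>0$, hence time-integrable uniformly in $\ve$; the role of $\delta_0$ is only to guarantee that $a_0(\ve\rho)$ has essentially completed its transition from $0$ to $a_\infty$ by time $T$.

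A second gap concerns the claim that $c(T)$ actually lands on $c_f$ up to $O(\sqrt\ve)$. Projecting onto the modulation directions gives $|c'-\ve f_1|\lss \|w\|_{loc}^2+\dots\lss\ve$ pointwise, but $\int_0^T\ve\,dt\sim\ve^{-\delta_0}$, so the crude modulation estimate loses control of the scaling over the long time scale; the paper needs the sharp virial/monotonicity estimate of Lemma \ref{VL} to upgrade this to $\int_0^T|c'-\ve f_1|\lss\ve$ (see (\ref{intc1})), and your proposal contains no substitute for it. Finally, "a well-chosen $a$" must be made precise: the paper's choice $a=-\ve a_0'(\ve x)Q_{c_0(t)}(x-\rho_0(t))$ is engineered so that the leading ODE $c_0^{-1/(p-1)}c_0'=-\ve\la_p a_0'(\ve\rho_0)\rho_0'$ integrates exactly to $c_0$ as an explicit function of $a_0(\ve\rho_0)$, which is how the asymptotic value $a_\infty$ in (\ref{ainf}) is reverse-engineered from $c_f$; a fixed-amplitude profile comoving with the soliton, as you propose, would require knowing when to switch the control off with an accuracy your estimates do not provide.
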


%
%From the point of view of applications, the previous result can be regarded as a first step for a rigorous study the controllability (or acceleration, in terms of physical literature) of stable particles under the action of external forces. 

\medskip

\begin{proof}[Proof of Theorem \ref{Cor1}, assuming Theorem \ref{MT}] The proof of Theorem \ref{Cor1} follows from Theorem \ref{MT}, and the subcritical character of solitons for $p<5$. Indeed, just take any $0<c_f \leq \frac 1{100}\delta^{\frac{4(p-1)}{5-p}}$ and $\ve_0 \lss \delta^2$ in Theorem \ref{MT}. From \eqref{Sol} we have, after integration and rescaling, 
\[
\|Q_{c_f}\|_{H^1(\R)} \sim c_f^{\frac{5-p}{4(p-1)}} < \frac12 \delta.
\]
Therefore, using \eqref{MT1},
\[
\|u(T) \|_{H^1(\R)}  \lesssim \sqrt{\ve} + \| Q_{c_f}(\cdot - \rho(T))\|_{H^1(\R)} \leq \frac 12\delta + \sqrt{\ve} \leq \delta.
\]
Note that Theorem \ref{Cor1} holds  even \emph{without destroying the soliton structure}.
\end{proof}

Some comments about Theorem \ref{MT}. 

\medskip

\begin{rem}
First of all, we point out that the control $a$ is not compactly supported, but it satisfies the following properties (see Section \ref{2} and (\ref{aexp}) for more details):
\ben
\item It is exponentially decreasing in any moving region far away form the soliton (in other words, it moves with the soliton);
\item It has slow variation in space, which actually explains the large time needed in order to drive the dynamics. 
\een
%Therefore from (\ref{MT1}), the solution $u(t)$ satisfies, after some direct verifications, the generalized observability inequality
%$$
%\|Q\|_{L^2(\R)} \lss \int_0^T \!\!\! \int_\R |u(t,x)|^2dx \, dt.
%$$
\end{rem}

\medskip

\begin{rem}
Second, as for the final position and time of control $T$ are concerned, we obtain  estimates of the following orders: for any $\delta_0>0$ small but fixed,
\[
T\sim \ve^{-1-\delta_0}, \qquad |\rho(T)- c_f T| \lesssim \ve^{-1/2-\delta_0},
\]
although the \emph{relative} error in the last estimate is $O(\ve^{1/2})$. The relative weakness in $\ve$ of the last estimates and the bound (\ref{MT1}) is mainly due to the emergence of \emph{dispersive tails} behind the soliton solution as the control acts; this phenomenon has been observed in several interaction problems involving gKdV equations, starting from the formal arguments in \cite{KN1,KK,Gr1}, and the more rigorous treatment given in \cite{MMcol1,MMcol3,Mu2,Mu3,H,HPZ}. This phenomenon does not appear in the case of nonlinear Schr\"odinger equations, where one expects better estimates (see e.g. \cite{Mu3}). Heuristically speaking, the lack of control on the position, compared to the extremely accurate control on the velocity, could be associated to a form of \emph{uncertainty principle} for solitons, regarded this time as almost point particles.
\end{rem}

\medskip

\begin{rem}
A necessary condition to obtain an estimate as in (\ref{MT1}) is the lack of conserved quantities (see Proposition \ref{Cau}). Indeed, it is not difficult to see that the soliton at time $T$ has lost or gained, depending on the sign of $a$ and $c_f$, a nontrivial $O(1)$ amount of mass (\ref{M}). A similar study can be applied to the case of the energy (\ref{E}), with similar conclusions. It is important to stress that, since solitons are stable under small $H^1$ perturbations \cite{Benj,BSS,We1} and the equation is not integrable unless $p=2$ or $p=3$, it is expected that the result above only holds if we introduce a sufficiently slowly varying potential.
\end{rem}

\medskip

On the other hand, problem (\ref{gKdV}) can be also regarded as a stabilization problem. In that sense, the recent literature concerns with the decay of solutions posed in a bounded interval \cite{RZ0,MMP,BZ,LRZ}, or the half line by Linares and Pazoto \cite{LP0,LP,PR}, and numerical schemes for the critical case $p=5$ (Pazoto et. al. \cite{PSV}). As for the decreasing mass case, and the approximate null controllability result stated in Theorem \ref{Cor1}, we have the following additional approximate stabilization result, without destroying the soliton:

\medskip

\begin{cor}\label{Cor2}
Under the assumptions of Theorem \ref{Cor1}, there exist $C,\mu_0>0$, independent of $\delta \in (0,\delta_1)$ such that, for all $t\in[0,T]$
\be\label{Stab}
\|u(t)\|_{H^1(\R)} \leq C (\delta + e^{-\mu_0 \delta^2 t}) \|Q\|_{H^1(\R)}, \qquad p=2,3,4.
\ee
\end{cor}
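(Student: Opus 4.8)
The plan is to read the stabilization bound off the modulated decomposition already constructed in the proof of Theorem \ref{MT}, rather than to redo any dynamical analysis from scratch. Recall that the proof of Theorem \ref{Cor1} applies Theorem \ref{MT} with final scaling $c_f\le \frac1{100}\de^{4(p-1)/(5-p)}$ and $\ve\sim\de^2$. Along the corresponding controlled evolution the solution admits, at every $t\in[0,T]$, a modulated decomposition
\[
u(t) = Q_{c(t)}(\cdot-\rho(t)) + \eta(t), \qquad \|\eta(t)\|_{H^1(\R)}\lesssim\sqrt{\ve}\sim\de,
\]
with $c(0)=1$, $c(T)=c_f$, and $\rho(t)$ the translation parameter. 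The first step is simply to invoke this decomposition, together with the fact that the continuity/bootstrap argument behind Theorem \ref{MT} furnishes the uniform error bound $\|\eta(t)\|_{H^1}\lesssim\sqrt\ve$ on the \emph{whole} interval $[0,T]$, not merely at the endpoint.

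The second, and main, step is to extract exponential decay of the scaling parameter $c(t)$. From the modulation equations governing the controlled flow, the leading-order evolution of $c(t)$ is driven by the projection of the bilinear term $au$ onto the scaling direction; differentiating the mass and using $M[u](t)\approx M[Q_{c(t)}]$ together with $M[Q_c]\sim c^{(5-p)/(2(p-1))}$ yields, at leading order,
\[
\frac{c'(t)}{c(t)} \approx \frac{4(p-1)}{5-p}\,a_0(t), \qquad a_0(t)\sim -\,\ve\sim-\,\de^2,
\]
where $a_0(t)$ denotes the (negative) amplitude of the slowly varying control near the soliton. Integrating this relation gives $c(t)\lesssim e^{-c_1\de^2 t}$ for some $c_1>0$ independent of $\de$, provided the error terms in the modulation equations remain of higher order than $\ve$, which is exactly what the estimates in the proof of Theorem \ref{MT} guarantee.

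The final step combines these two ingredients through the triangle inequality and the scaling law $\|Q_c\|_{H^1(\R)}\sim c^{(5-p)/(4(p-1))}\|Q\|_{H^1(\R)}$, valid for $0<c\le1$ (the $L^2$ part of the norm dominates for small $c$, cf. the computation in the proof of Theorem \ref{Cor1}). Indeed,
\[
\|u(t)\|_{H^1(\R)} \le \|Q_{c(t)}\|_{H^1(\R)} + \|\eta(t)\|_{H^1(\R)} \lesssim \big(e^{-\mu_0\de^2 t}+\de\big)\,\|Q\|_{H^1(\R)},
\]
with $\mu_0=c_1(5-p)/(4(p-1))>0$ independent of $\de$, which is precisely \eqref{Stab}; note that the factor $\|Q\|_{H^1(\R)}$ arises automatically from the scaling law and from bounding the fixed-size error $\de$. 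The delicate point is the second step: one must check that the sign and size of $a_0(t)$ produce genuine monotone exponential decay of $c(t)$ \emph{uniformly} in $\de$, and that the modulation errors accumulated over the long interval $T\sim\de^{-2(1+\delta_0)}$ do not overwhelm the exponentially small scaling. This is exactly where the sharp control of the dispersive tail established in the proof of Theorem \ref{MT} is needed.
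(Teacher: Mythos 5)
Your first and third steps are sound and agree with what the paper uses implicitly: Proposition \ref{prop:I} supplies the uniform-in-time decomposition with error $O(\sqrt{\ve})=O(\de)$ in $H^1$, and the scaling law $\|Q_c\|_{H^1(\R)}\sim c^{(5-p)/(4(p-1))}\|Q\|_{H^1(\R)}$ converts smallness of $c(t)$ into the desired bound. The genuine gap is in your second step. You assert that the effective control amplitude seen by the soliton is $\sim -\ve$ with a constant uniform in $t$, and integrate $c'/c\approx -c_1\de^2$ to conclude $c(t)\lesssim e^{-c_1\de^2 t}$. But the control actually constructed is $a(t,x)=-\ve a_0'(\ve x)Q_{c_0(t)}(x-\rho_0(t))$ (see (\ref{aexp})), so the drive on the scaling parameter is $c'(t)\approx -\ve\la_p a_0'(\ve\rho(t))\,c^{p/(p-1)}$, and the factor $a_0'(\ve\rho(t))$ is \emph{not} bounded below: by (\ref{ahyp}) it decays like $e^{-\ga_0|\ve\rho(t)|}$. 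Since $\rho(0)=-\ve^{-1-\delta_0}$ and $\rho'\approx c$, the soliton spends a long initial portion of $[0,T]$ far outside the active region of $a_0'$, where $c'(t)$ is exponentially small and $c(t)$ essentially does not move. Hence the uniform differential inequality $c'/c\le -c_1\de^2$ is false, and the integration step collapses; this is precisely the ``delicate point'' you flag at the end, and it is not supplied by the dispersive-tail estimates of Theorem \ref{MT}.

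The paper proceeds differently: it integrates the modulation ODE in closed form. Dividing $c'=\ve f_1+c_1'$ by $c^{1/(p-1)}$ and using $\rho'=c+O(\ve)$ turns the equation into an exact derivative, which yields $c(t)=e^{-\la_2 a_0(\ve\rho(t))}(1+O(\sqrt{\ve}))$ for $p=2$ and (\ref{Cp}) for $p=3,4$; that is, $c$ is slaved to the rescaled position $\ve\rho(t)$ rather than to $t$ directly. The time decay is then extracted from the spatial asymptotics (\ref{ahyp}) of $a_0$ combined with the lower bound $\rho(t)-\rho(0)\gtrsim t$. To repair your argument you would have to replace the uniform lower bound on the drive by this position-dependent description and then convert decay in $\ve\rho(t)$ into decay in $\de^2 t$ --- which is exactly where the remaining (and nontrivial) work of the corollary is concentrated.
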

\medskip

\begin{rem}
Finally, some words about the corresponding exact controllability problem. A nice exact controllability result could be obtained if we were able to prove e.g. exact null controllability of small solitons, and then combining Theorems \ref{Cor1} and \ref{MT1} in the standard way. However, based on some results about inelasticity of the dynamics for slightly perturbed solitons (cf. \cite{Mu2,Mu3}), we believe that in our model, and in more general situations, solitons are never exactly controllable, even in infinite time. 

Indeed,  note that if \eqref{CP} is exactly controllable to zero in finite time, say $u(T)=0$ for some bounded, smooth control $a(t,x)$, then using the reversibility in time of the equation, we have that $v(t,x):= u(T-t,-x)$ satisfies a slightly different equation,
\[
v_t + (v_{xx} + v^p)_x = b(t,x)v, \quad v(0)=0,
\]
for the potential $b(t,x) :=a(T-t,-x)$. It turns out that, under standard assumptions on the solvability of the Cauchy problem associated to $v$, the unique solution to the above problem is the identically zero solution, a contradiction. I thank Sylvain Ervedoza for this remark.
\end{rem}

\medskip

 \subsection{About the proofs}  Our proofs do not involve the usual methods employed in control theory, requiring e.g. the study of the linear problem, unique continuation properties and/or Carleman estimates. In order to study genuine nonlinear objects such as solitons, we need different dispersive methods. In particular, a suitable global well-posedness theory in the energy space for solutions of (\ref{gKdV}) in the real line requires modifications on the arguments of the fundamental work by Kenig, Ponce and Vega \cite{KPV}, in order to deal with the unbounded domain case. Second, our control is explicitly constructed, with the following properties: $(i)$ it has a slowly varying character, determined by the parameter $\ve$; $(ii)$ it is localized in a moving region of size $O(1)$, and is of strength $O(\ve)$ (but it is not compactly supported), and $(iii)$ the corresponding slowly varying part induces on the soliton parameters a finite dimensional dynamical system which governs the whole dynamics.  Concerning the time of control, since the dynamics is slowly varying, the time of interaction is $O(\ve^{-1})$ at least; a large control introduced in a smaller window of time could destroy the soliton.
 
 \medskip
 
 The second step of the proof is the following: since the introduction of the control induces on the soliton the action of an external potential, we can think such an interaction as a slowly varying collision between both objects. In Section \ref{3}, we construct an explicit approximate solution which describes the interaction, up to certain order of accuracy in $\ve$. This solution $\tilde u(t)$ has the form
\[
u(t,x) \sim Q_{c(t)} (x-\rho(t))  + \ve A(t,x-\rho(t)),
\]
where $\ve>0$ is a small, artificially introduced parameter, and $(c(t),\rho(t))$ are suitable scaling and translation parameters, depending on time. The parameters follow a suitable approximate finite-dimensional, \emph{slowly varying} in time dynamics, determined by the action of the control, described as follows:
\[
c'(t) \sim  a_x(t, \ve \rho(t)), \quad \rho'(t) \sim c(t) +O(\ve),
\]
where $a$ is the control introduced in (\ref{aexp}). We choose carefully $a$ such that the evolution of this system leads to the desired final velocity, at time $T\sim \ve^{-1-\delta_0}$,
\be\label{cpT}
c(T) \sim c_f, \quad \rho(T) \sim c_f T  + o(T),
\ee
however, a better control on the position has escaped to us.

\medskip

Concerning the function $A$, it corresponds to a first order correction term with support  of size $O(\ve^{-1})$ in the variable $x-\rho(t)$ (the soliton variable), and $L^\infty$-norm of order $O(e^{-\ga_0 \ve |\rho(t)| })$, for some constant $\ga_0>0$. Therefore, $A$ is a phantom term that disappears after the interaction, but which allows to improve the accuracy of the approximate solution. \emph{Finding $A$ is an absolutely necessary condition, otherwise a bound like \eqref{MT1} is highly unlikely}. Additionally, $A$ is in principle only bounded, but not localized,\footnote{In principle, $A$ models a \emph{dispersive tail} behind the soliton solution.} therefore we introduce a suitable cut-off function to recover a finite mass solution.
The error associated to this approximation is measured in terms of the $L^\infty_t H^1_x$ norm, and it has to be small enough in order to take into account the large time of interaction. In our case, we are able to prove that during the whole interaction, one has
\[
\hbox{error} \sim \ve^{3/2} e^{-\ga_0 \ve |\rho(t)| }, \quad t\in [0, T],
\]
(see (\ref{SSS})), therefore the propagation of this error during a time interval of order $\sim T$ formally leads to the bound $O(\sqrt{\ve})$ in Theorem \ref{MT}.  We remark that this method has been recently applied, in a different context, to several interaction problem, notably the two-soliton collision by Y. Martel and F. Merle \cite{MMcol1,MMcol3}, and the interaction of solitons with a potential \cite{Mu2,Mu3}. See also \cite{QL} for a related soliton-potential problem in a different context, in the easier case of the cubic nonlinearity, and for which the term $A$ is not needed. 
 
\medskip 
 
The third step of the proof is the following. In order to control the dynamics of the error terms, we introduce a suitable Lyapunov functional (Section \ref{4}), adapted this time to the genuine nonlinear dynamics of the problem  (see e.g. \cite{MMcol1}). This functional has very small variation in time, provided we control the size of some time dependent parameters of the soliton solution. We avoid that problem by using sharp virial estimates, in the spirit of \cite{MMnon}.  After this point, we can close the main argument by proving rigorously that the error terms can be assured to be smaller than $O(\sqrt{\ve})$, during the whole interaction region. 

\medskip

The final step of the proof is a rigorous analysis of the parameters $(c(t),\rho(t))$ of the soliton solution, in order to recover (\ref{cpT}). We prove that at time $t=T$, the solution has the desired behavior, up to an error of $O(\sqrt{\ve})$, finishing the proof of Theorem \ref{MT}. Finally, the proof of Corollary \ref{Cor2} follows after a detailed study of the scaling parameter $c(t)$.

\medskip

The weakness of our approach is precisely the approximate character of the controllability property, and the large time needed to reach an approximate final state. We believe that our results can be improved by adapting to this case, the standard and complex machinery of control theory.  Additionally, we believe that the moving profile of the support can be chosen to be compactly supported.
 
\medskip 

We point out that in order to describe the dynamics in a time of order $O(1)$, one formally needs a large control; in particular, it should be unbounded in space (more precisely, linearly growing in space). However, even the local in time Cauchy problem for such perturbations becomes a very difficult problem.

 \medskip
 
 Finally, some words about the organization of this paper. In Section \ref{2}, we introduce the explicit control system, the finite dimensional dynamical system and the corresponding local and global well-posedness theory. In Section \ref{3} we construct an approximate solution to a given order of accuracy. We continue this process up to the moment when we find an infinite mass correction term, which is up to date the best mathematical description of the dispersive tail behind the soliton, originated by the application of the control. Section \ref{4} is devoted to the introduction of a Lyapunov function, modulation theory and a key virial identity in order to control the dynamics of the oscillatory terms. Finally, in Section \ref{5} we prove the main theorem.
 
\medskip 
 
{\bf Notation.} Along this paper we use the convention $A\lss B$ if and only if there exists $K>0$, independent of $\ve$, such that  $A\leq KB$. Additionally, $\ga$ and $K^*$ will denote special positive constants, still independent of $\ve$, to be worried about. Finally, $\mathcal S(\R)$ denotes the Schwartz's class on $\R$.
 
\medskip

{\bf Acknowledgments.}  I would like to thank the referees for their useful and constructive critiscisms. I also thank Gunther Uhlmann and Axel Osses for their kind invitation to the PASI-CIPPDE 2012, Inverse Problems and PDE Control, held in Santiago-Chile, and where this project was originally conceived. Finally I'm grateful of Eduardo Cerpa and Sylvain Ervedoza, for many useful comments and suggestions to a first draft of this paper.

\bigskip

\section{First ingredients}\label{2} 
 
\medskip

Let  $p=2,3$ or $4$.  Given any $c_f>0$ fixed and $Q$ be the soliton defined in (\ref{Q}), we define the quantities
\be\label{lap}
\la_p:= \frac{4(p-1)}{5-p} \frac{\int_\R Q^3}{\int_\R Q^2}>0, 
\ee
%and
\be\label{ainf}
a_\infty:= -\frac 1{\la_2} \log c_f , \quad p=2, \qquad a_\infty:= \frac{(p-1)}{\la_p(p-2)}(1-c_f^{\frac{p-2}{p-1}} ), \quad p=3,4.
\ee
Note that, as expected, for every $p$ the value of $a_\infty(c_f)$ tends to zero as $c_f$ approaches the trivial case $c_f=1$ (i.e. no control is needed). 

\medskip

We introduce now the control $a(t,x)$. Given any $\ve>0$ small, we consider a smooth function $a_0$ satisfying the  following properties (recall that $A\lesssim B$ means that there is $C>0$ such that $A \leq CB$)
\bea\label{ahyp}
\begin{cases}
 a_0\in C^3(\R) \cap L^\infty(\R) , \\
 |a_0(x)| \lesssim e^{\ga_0 x}, \hbox{ for } x\leq -1,  \quad | a_\infty-a_0(x)| \lesssim e^{-\ga_0 x}   \hbox{ for } x\geq 1,  \\
 |a_0^{(k)}(x)| \lesssim e^{-\ga_0 |x|}, \quad x\in \R, \; k=1,2,3, \\
 a_0'(x)>0 \; \hbox{ if } a_\infty>0, \quad a_0'(x)<0 \;\hbox{ if } a_\infty<0,
\end{cases}
\eea
for a fixed, positive constant $\ga_0.$  Note that with this choice,
\be\label{Linf}
\|a_0\|_\infty =|a_\infty|.
\ee

Let $0<c_m:= \frac 12 \min \{ c_f,1\}$, $c_M:= 2\max \{ 1, c_f\}$, and $(c_0(t), \rho_0(t)) \in \R_+\times \R$ be a set of  $C^1$ parameters defined in $\{ t\geq 0\}$, with the following uniform, a-priori constraints
\be\label{r1}
0<c_m  \leq c_0(t) ,\, \rho_0'(t)  \leq c_M.
\ee
More precisely, consider $t\geq 0$ and $(c_0(t),\rho_0(t)) \in \R_+ \times \R$ be the unique solution of the nonlinear ODE system
\be\label{ODE}
\begin{cases}
c_0'(t) =\ve f_1^0(c_0(t),\rho_0(t)), \qquad  c_0(0)=1,\\
\rho_0'(t) = c_0(t),\qquad \rho_0(0)=-\ve^{-1-\delta_0}, %+\ve f_2^0(c_0(t),\rho_0(t))
\end{cases}
\ee
where $f_1^0$ is defined as follows (cf. (\ref{lap}))
\be\label{f10}
f_1^0(c,\rho) := -\la_p a_0'(\ve \rho) c^{\frac p{p-1}},
\ee
and $\delta_0>0$ is the small parameter of Theorems \ref{Cor1} and \ref{MT}. Additionally, we will need the following function
\be\label{f20}
 f_2^0(c,\rho) := \mu_p  a_0'(\ve \rho)c^{\frac{2(5-2p)}{7-3p}},
\ee
for some $\mu_p\in \R$, with $\mu_3 =0$ (note that $\frac{2(5-2p)}{7-3p}>0$ for $p=2,3,4$). 
\medskip

\begin{lem}\label{ODE0}
There exists a unique solution $(c_0(t),\rho_0(t)) \in \R_+ \times \R$ of (\ref{ODE}), defined for all $t\geq 0$. Moreover, we have
\be\label{ODE1}
\lim_{t\to +\infty} c_0(t) = c_f(1+O(\ve^{10})), \qquad \lim_{t\to +\infty} \rho_0(t) = +\infty,
\ee
and
\be\label{ODE2}
c_m  \leq c_0(t)  \leq c_M,
\ee
for $\ve$ small enough.
\end{lem}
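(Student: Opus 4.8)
The plan is to treat \eqref{ODE} as an autonomous planar system whose entire content is extracted from the fact that, along the flow, $c_0$ obeys a \emph{separable} scalar equation once $\rho_0$ is taken as the independent variable. First I would record local existence and uniqueness of a maximal solution on some interval $[0,T_{\max})$ by Cauchy--Lipschitz: the vector field $(c,\rho)\mapsto(\ve f_1^0(c,\rho),c)$ is $C^2$ on $\R_+\times\R$, since $a_0\in C^3$ by \eqref{ahyp} and $c\mapsto c^{p/(p-1)}$ is smooth for $c>0$.

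Next I would observe that as long as $c_0>0$ the map $t\mapsto\rho_0(t)$ is strictly increasing (because $\rho_0'=c_0>0$), hence invertible, so that viewing $c_0$ as a function of $\rho=\rho_0$ gives
\[
\frac{dc_0}{d\rho}=\frac{c_0'}{\rho_0'}=-\ve\la_p\,a_0'(\ve\rho)\,c_0^{1/(p-1)}.
\]
This is separable, and integrating from $\rho_0(0)$ to $\rho$ (using $\ve\,d\rho=ds$ on the right-hand side) yields an explicit relation $\Phi(c_0)=-\la_p\bigl(a_0(\ve\rho)-a_0(\ve\rho_0(0))\bigr)$, where $\Phi(c)=\log c$ for $p=2$ and $\Phi(c)=\tfrac{p-1}{p-2}\bigl(c^{(p-2)/(p-1)}-1\bigr)$ for $p=3,4$, using $c_0=1$ at $\rho=\rho_0(0)$. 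Since $a_0'$ has a fixed sign by \eqref{ahyp} (positive when $a_\infty>0$, i.e. $c_f<1$, negative when $a_\infty<0$, i.e. $c_f>1$), the map $\rho\mapsto c_0$ is monotone, running between $c_0=1$ and its limit as $\rho\to+\infty$.

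For the a priori bounds \eqref{ODE2} and global existence I would run a continuation argument. Set $T^*=\sup\{\tau<T_{\max}:c_m\le c_0\le c_M\text{ on }[0,\tau]\}$; since $c_m<1<c_M$ strictly, $T^*>0$. On $[0,T^*)$ the constraint \eqref{r1} holds, so the separable reduction above is licensed there, and combined with monotonicity and $\|a_0\|_\infty=|a_\infty|$ from \eqref{Linf}, the explicit formula confines $c_0$ to the closed interval between $1$ and the limit value, which lies \emph{strictly} inside $(c_m,c_M)$ for $\ve$ small. By continuity this forbids $c_0$ from reaching $c_m$ or $c_M$, forcing $T^*=T_{\max}$; and since then $c_0$ stays in a compact subset of $(0,\infty)$ with $\rho_0'=c_0\le c_M$ growing at most linearly, no blow-up is possible and $T_{\max}=+\infty$. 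The statement $\rho_0\to+\infty$ in \eqref{ODE1} is then immediate from $\rho_0'=c_0\ge c_m>0$.

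Finally, the limit of $c_0$ in \eqref{ODE1} follows by letting $\rho\to+\infty$ in the explicit relation: by \eqref{ahyp} one has $a_0(\ve\rho)\to a_\infty$, while $a_0(\ve\rho_0(0))=a_0(-\ve^{-\delta_0})=O(e^{-\ga_0\ve^{-\delta_0}})=O(\ve^{10})$ because exponential decay beats any power of $\ve$. Thus $\Phi(c_0(\infty))=-\la_p a_\infty+O(\ve^{10})$, and the point of the definition \eqref{ainf} is precisely that $\Phi(c_f)=-\la_p a_\infty$; inverting the smooth, strictly monotone $\Phi$ near $c_f>0$ gives $c_0(\infty)=c_f(1+O(\ve^{10}))$. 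I expect the only genuinely delicate point to be keeping the bootstrap noncircular: the separable reduction is valid only while $c_0>0$, so the monotone confinement and the continuation step must be interleaved. The integrations themselves, and the matching of $a_\infty$ to $c_f$, are routine once the separable structure is recognized.
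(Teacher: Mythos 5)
Your proposal is correct and follows essentially the same route as the paper: the separable reduction in the variable $\rho$ is exactly the paper's integration of $c_0^{-1/(p-1)}c_0'=-\ve\la_p a_0'(\ve\rho_0)\rho_0'$, leading to the same explicit formulas and the same matching of $\Phi(c_f)=-\la_p a_\infty$ via \eqref{ainf}. The only cosmetic difference is that the paper secures $c_0>0$ by uniqueness against the trivial solution $(0,\mathrm{const})$ rather than by your (equally valid, and slightly more careful) continuation argument.
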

\begin{proof}
The existence of a unique local solution to (\ref{ODE}) is a direct consequence of the Cauchy-Lipschitz-Picard theorem. The global character of the solution is directly determined by the boundedness of $a_0$. 

\medskip

Let us prove (\ref{ODE1}) and (\ref{ODE2}). First of all, note that $(c_0(t),\rho_0(t))\equiv (0, \hbox{constant})$ is a constant solution of (\ref{ODE}), without considering the initial conditions. Therefore, we have $c_0(t)>0$ for all $t\geq 0$. On the other hand, from the first equation in (\ref{ODE}),
\bee
c_0^{-\frac1{p-1}}(t) c_0'(t) & =&  -\ve \la_p a_0'(\ve \rho_0(t)) c_0(t)   = -\ve \la_p a_0'(\ve \rho_0(t)) \rho_0'(t) .%+  \ve^2 \la_p a_0'(\ve \rho_0(t)) f_2^0(t).
\eee
Hence, if $p=2$,
\[
\log c_0(t)  = -\la_2 [a_0(\ve \rho_0(t)) - a_0(-\ve^{-\delta_0})].
\]
from which we obtain for $\ve$ small, using (\ref{ahyp}),
\be\label{C20}
c_0(t) = e^{-\la_2 a_0(\ve \rho_0(t))}(1+O(\ve^{10})), \quad p=2,
\ee
with the term $O(\ve^{10})$ independent of time. Similarly, if $p=3$ or $4$,
\be\label{Cp0}
c_0(t) = \Big[  1-  \la_p \frac{(p-2)}{p-1} a_0(\ve \rho_0(t)) \Big]^{\frac{p-1}{p-2}} (1+O(\ve^{10})).
\ee
Note that from (\ref{C20})-(\ref{Cp0}), (\ref{ainf}) and (\ref{ahyp}), $c_0(t)$ satisfies the bounds
\[
0< c_m = \frac 12 \min\{ c_f,1\} \leq c_0(t) \leq 2 \max \{ c_f,1\} =c_M .
\]
This shows (\ref{ODE2}). We conclude that for $\ve>0$ small, $\rho_0(t)$ is increasing and $\rho_0(t) -\rho_0(0) \geq c_m t $,
which implies that $ \lim_{\infty}\rho_0=+\infty$. Moreover, from (\ref{ahyp}), 
\[
 \lim_{+\infty}a(\ve \rho_0(t)) = a_\infty .
 \]
 Therefore
\[
\lim_{+\infty}c_0(t) = e^{ -\la_2 a_\infty} (1+O(\ve^{10})) = c_f (1+O(\ve^{10})) , \quad p=2,
\]
and similarly for $p=3, 4$. This proves (\ref{ODE1}).
\end{proof}

\medskip

Finally, define
\be\label{aexp}
a(t,x):= -\ve a_0'(\ve x)Q_{c_0(t)}(x-\rho_0(t)),
\ee
where $Q_c$ is the solution of (\ref{ecQc}) and $(c_0(t),\rho_0(t))$ is the solution of (\ref{ODE}).
%Note that $a$, as a function of the pair $(c_0,\rho_0)$, is a contraction in any small vicinity of that point. Indeed, we have
%$$
%| a(c_0,\rho_0) -a(c_1,\rho_1)| \lss \ve (|c_0-c_1| +|\rho_0-\rho_1|).
%$$
Let us remark that this control takes into account important information of the soliton itself, namely the approximate scaling $c_0(t)$ and position $\rho_0(t)$, and it is in some sense of nonlinear character. In terms of numerical applications, these two parameters can be easily described by solving the ODE (\ref{ODE}). The non stationary character of this control will become essential in the proof. 

\medskip

It is not difficult to check that this control satisfies the following space-time bounds
\[
\|a\|_{L^\infty([0,\infty)\times \R)} + \|a_{xx}\|_{L^\infty([0,\infty)\times \R)} \lesssim  \ve.
\]
Under these estimates, we claim that the Cauchy problem associated to (\ref{gKdV}) is locally well-posed in a subspace of $H^1(\R)$.
\medskip

\begin{prop}\label{Cau}
Under the assumptions (\ref{ahyp}), (\ref{r1}), (\ref{ODE}) and (\ref{aexp}), the initial value problem (\ref{CP}) is locally well-posed in $H^1(\R)$. Moreover, the mass $M[u](t)$ and energy $E[u](t)$ defined in (\ref{M}) and (\ref{E}) satisfy the relations
\be\label{dME}
\partial_t M[u](t) =  \int_\R a(t,x) u^2, \qquad \partial_t E[u](t) =- \frac 12 \int_\R a_{xx}u^2 - \int_\R a u^{p+1} +\int_\R a u_x^2. 
\ee
%Finally, the map $(c,\rho) \to u(c,\rho)$ is a continuous contraction in a small neighborhood of $(c_0,\rho_0)$.
\end{prop}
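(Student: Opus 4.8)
The plan is to establish local well-posedness first, and then derive the two identities in \eqref{dME} as an a priori computation that is subsequently justified by a density argument. For the well-posedness, I would follow the strategy of Kenig-Ponce-Vega \cite{KPV}, viewing the right-hand side $a(t,x)u$ as a lower-order perturbation. The key observation is that under \eqref{ahyp}, \eqref{aexp} and Lemma \ref{ODE0}, the potential $a$ satisfies $\|a\|_{L^\infty_{t,x}} + \|a_{xx}\|_{L^\infty_{t,x}} \lesssim \ve$, and moreover $a(t,\cdot)$ together with its first two spatial derivatives lies in $L^\infty(\R)$ uniformly in $t$ (this uses that $Q_{c_0(t)}$ and $a_0'$ and their derivatives are bounded, with $c_0(t)$ confined to $[c_m,c_M]$). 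Treating the linear term $au$ as a bounded, smooth, time-dependent zeroth-order perturbation of the gKdV flow, I would set up the Duhamel formulation
\be\label{duh}
u(t) = W(t)Q - \int_0^t W(t-s)\big[(u^p)_x - a(s)u(s)\big]\,ds,
\ee
where $W(t)$ is the Airy (linear KdV) propagator, and run a contraction mapping argument in the usual Kenig-Ponce-Vega space based on the local smoothing, maximal function and Strichartz estimates for $W$. The extra term $\int_0^t W(t-s)a(s)u(s)\,ds$ is harmless since $a$ is bounded and smooth, so it contributes only favorable (non-derivative) terms that are controlled by the same estimates, and the fixed-point argument closes in $C([0,T^*],H^1(\R))$ for a short time $T^*$ depending on $\|Q\|_{H^1}$ and $\ve$.

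Once a local $H^1$ solution exists, I would derive the mass identity by differentiating $M[u](t)=\tfrac12\int_\R u^2$ formally: using the equation $u_t=-(u_{xx}+u^p)_x+au$, one gets $\partial_t M=\int_\R u\,u_t = -\int_\R u\,(u_{xx}+u^p)_x + \int_\R a u^2$. The first integral vanishes after integration by parts — it is the divergence of a conserved flux for the unforced flow — leaving exactly $\partial_t M[u](t)=\int_\R a u^2$. For the energy, starting from $E[u]=\tfrac12\int_\R u_x^2 - \tfrac1{p+1}\int_\R u^{p+1}$, the derivative $\partial_t E = \int_\R u_x u_{xt} - \int_\R u^p u_t = -\int_\R (u_{xx}+u^p) u_t$, and substituting $u_t = -(u_{xx}+u^p)_x + au$ yields one term $\int_\R (u_{xx}+u^p)(u_{xx}+u^p)_x$ that integrates to zero, plus a forcing contribution $-\int_\R a u (u_{xx}+u^p)$. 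I would then integrate the $a\,u\,u_{xx}$ piece by parts twice to split it into $\int_\R a u_x^2 + \tfrac12\int_\R a_{xx} u^2$ (with the right signs), and combine the remaining $-\int_\R a u^{p+1}$ term, arriving at the stated expression $\partial_t E = -\tfrac12\int_\R a_{xx} u^2 - \int_\R a u^{p+1} + \int_\R a u_x^2$.

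The main obstacle is not the formal computation but its rigorous justification at the $H^1$ level, since the integrations by parts for the energy identity require enough regularity to make sense of $\int_\R a u u_{xx}$ and $\int_\R u^p u_t$. I would handle this by first proving the identities for smooth, rapidly decaying data (e.g. in $\mathcal S(\R)$ or a dense subspace of $H^1$), where all manipulations are legitimate and all boundary terms vanish by decay, and then passing to the limit using continuous dependence of the $H^1$ solution on the data together with the uniform bounds $\|a\|_{L^\infty}, \|a_{xx}\|_{L^\infty}\lesssim \ve$ and the Sobolev embedding $H^1(\R)\hookrightarrow L^\infty(\R)$ to control the nonlinear integrals $\int_\R a u^{p+1}$. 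A mild additional care is needed because the right-hand sides of \eqref{dME} involve $a_{xx}$ and $u^{p+1}$, so I would verify that each functional $t\mapsto M[u](t)$ and $t\mapsto E[u](t)$ is genuinely $C^1$ on $[0,T^*]$ using the continuity of the flow in $H^1$ and the fact that $u\in C([0,T^*],H^1)$ makes all the integrands continuous in $t$; since $a$ is itself smooth in $t$ (through the $C^1$ parameters $c_0(t),\rho_0(t)$), no regularity is lost from the potential.
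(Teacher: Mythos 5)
Your proposal is correct and follows essentially the same route as the paper: a Kenig--Ponce--Vega contraction argument on the Duhamel formulation with $a(t,x)u$ treated as a bounded zeroth-order perturbation (using $\|a\|_{L^\infty}\lesssim\ve$ and the integrability of $a$ in $x$), followed by the direct computation of $\partial_t M$ and $\partial_t E$ via integration by parts. Your integrations by parts reproduce \eqref{dME} exactly, and your remark about justifying them by density is a reasonable (indeed more careful) supplement to what the paper only sketches.
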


\begin{rem}
Later we will prove that our solution is well-defined, for all $t\leq T\sim \ve^{-1-\delta_0}$, as a consequence of the stability property (\ref{MT1}).
\end{rem}

\medskip

\begin{proof}
This result is classical, see e.g. Merle-Vega \cite{MV} in the case where $p=3$ (the so called mKdV equation) and the nonlinearity $u^p$ has the opposite sign. For the sake of completeness, we sketch the main details. We use the machinery developed by Kenig, Ponce and Vega \cite{KPV} to prove local well-posedness for gKdV in low regularity Sobolev spaces. Since we only need an $H^1$ local theory, our proof will be simpler than the original one.

\medskip

Recall that we want to solve 
\[
u_t + u_{xxx} = - [p u^{p-1}u_x + a(t,x)u], \qquad u(0) = u_0 \in H^1(\R).   
\]
If we denote by $e^{-t\partial_x^3}$ the free Airy propagator, we have to solve fixed point problem
\bee
u(t)  & = &  \mathcal T[u](t) := e^{-t\partial_x^3}u_0 - \int_0^t  e^{-(t-s)\partial_x^3}[p u^{p-1}u_x + a u](s)ds\\
&  := & \mathcal T_0[u_0] (t)+ \mathcal T_1[u](t).
\eee
Note that, since we have chosen $(c,\rho)$ following (\ref{r1}), 
\[
\int_\R \sup_{ 0\leq t\leq 1} a^2 \lss \ve^2 \int_\R a_0'^2(\ve x) dx \lss \ve, \qquad \sup_{0\leq t \leq 1} \|a\|_{L_x^\infty(\R)} \lss \ve.
\]
Using \cite[Theorem 3.5]{KPV} and the maximal function estimate \cite[(3.9)]{KPV}, we have, for any $S\in (0,1)$,
\bee
\sup_{0\leq t \leq S}\| \partial_x \mathcal T_1[u]\|_{L^2(\R)} & \leq&  \int_{\R_x} \| p u^{p-1}u_x + a u\|_{L^2(0\leq t \leq S)} dx \\
& \lss & \Big( \int_\R \sup_{0\leq t \leq S} |u|^{2p-2} dx\Big)^{1/2} \|u_x\|_{L^2([0,S] \times \R)}  \\
& & \qquad +\Big( \int_\R \sup_{0\leq t\leq S} a^2 dx\Big)^{1/2} \|u\|_{L^2([0,S] \times\R)} \\
& \lss &   S^{1/2}\|u\|^{p}_{L^\infty((0,S); H^1(\R))}  +  \sqrt{\ve} S^{1/2}\|u\|_{L^\infty((0,S); L^2(\R))}.
\eee
On the other hand,
\bee
\|\mathcal T_1[u]\|_{L^2(\R_x)} & \lss&  \int_0^S [\| u^{p-1}u_x \|_{L^2(\R_x)}+ \| a u\|_{L^2(\R_x)} ]dt \\
& \lss & S  \sup_{0\leq t\leq S}\Big( \int_0^S \|u\|^{p-1}_{L^\infty(\R_x)} \Big)^{1/2} \|u_x\|_{L^2(\R_x)} \\
& &   + S\sup_{0\leq t\leq S}  \|a\|_{L^\infty(\R_x)} \|u\|_{L^2(\R)}.
\eee
It is not difficult to check that these estimates give that, for $S$ small, $\mathcal T$  maps a ball of $H^1$ into itself.
The contraction follows in a similar way.

\medskip

Let $T_0>0$ be the maximal time of existence of a solution $u(t)$. It is not difficult to check that the mass and energy (\ref{M})-(\ref{E}) satisfy, for all $t\in [0, T_0)$,
\[
\partial_t M[u](t) =  \int_\R a(t,x) u^2 \lesssim  \ve M[u](t),
\]
therefore $M[u](t) \lesssim e^{C\ve t}$. On the other hand, the energy (\ref{E}) satisfies the relation
\[
\partial_t E[u](t)  = - \frac 12 \int_\R a_{xx}(t,x)u^2 - \int_\R a(t,x) u^{p+1} +\int_\R a(t,x) u_x^2.
\]
\end{proof}

\bigskip

\section{Approximate solution}\label{3}

\medskip

Given any $\ve>0$ and $\delta_0>0$ small, we introduce the time of interaction
\be\label{T}
T := \min\{ T_0, \ve^{-1-\delta_0}\},
\ee
where $T_0>0$ is the maximal time of existence of the solution $u(t)$ with initial condition $Q(x)$.

\medskip

Let $t\in [0, T]$. In what follows, we \emph{fix} a couple of dynamical parameters $(c(t), \rho(t))$, a perturbation of the couple $(c_0(t),\rho_0(t))$, and satisfying  the same estimates (\ref{r1}) in the same subinterval of $\{t\geq 0\}$. Additionally, we will assume that 
\be\label{r2}
|c(t) -c_0(t)| +|\rho(t) -\rho_0(t)| \leq \ve^{1/2-\delta_0}, 
\ee
for all $t\in [0,T].$ 

\medskip

Now we define the modulated soliton solution as follows. Let
\[
y:= x-\rho(t), \qquad  R(t,x) := Q_{c(t)} (y). 
\]
Finally, we introduce the approximate solution
\be\label{defW}
\tilde u(t,x) := R(t,x) + w(t,x), \quad w(t,x):= \ve d(t) A_{c(t)}(t,y), \quad d(t):= a_0'(\ve \rho(t));
\ee
for some $L^\infty(\R)$ function $A_c(t,\cdot)$, to be introduced later. In order to simplify some computations, we will assume that for $c_m\leq c(t)\leq c_M$ and $t$ fixed, $A_{c(t)}(t,\cdot)$ satisfies the estimates
\be\label{Ac}
A_c'(t,\cdot ) \in L^2(\R), \qquad \partial_c A_c(t,\cdot) \in L^\infty(\R),
\ee
that will be verified below. Finally, we define the scaling operator
\be\label{LaQc}
\Lambda Q_c(y) := \partial_{c'} {Q_{c'}}\big|_{ c'=c} (y)= \frac 1c \Big[\frac 1{p-1} Q_c(y) + \frac 12 yQ'_c(y) \Big] \in \mathcal S(\R).
\ee

\medskip

We want to measure the size of the error induced by inserting $\tilde u$ as defined in (\ref{defW}) in the equation (\ref{CP})-(\ref{aexp}). Let
\be\label{2.2bis}
S[\tilde u](t,x) := \tilde u_t + (\tilde u_{xx}  + \tilde u^{p})_x + \ve a_0'(\ve x) Q_{c_0(t)}(x-\rho_0(t))\tilde u .
\ee
From (\ref{r2}) we have
\bea\label{Estimate}
 Q_{c_0(t)}(x-\rho_0(t)) &=& Q_{c(t)}(x-\rho(t)) + \Lambda Q_{c(t)}(x-\rho(t))(c_0(t) -c(t))\nonu  \\
 & &   + Q_{c(t)}'(x-\rho(t))(\rho_0(t) -\rho(t))   + O_{H^1(\R)}(\ve^{1-2\delta_0}).
\eea
Our first result is the following

\medskip

\begin{prop}\label{prop:decomp} Let $(c,\rho)$ be satisfying (\ref{r1}) and (\ref{r2}). There exists a function $A_c\in L^\infty(\R)$ such that $\tilde u$, defined in (\ref{defW}), satisfies\footnote{The first two terms in (\ref{Sdeco}) are often referred as the finite-dimensional \emph{dynamical system} associated to the soliton dynamics.}
\be\label{Sdeco}
S[\tilde u](t,x) = (c'(t)- \ve f_1(t)) \partial_c \tilde u - (\rho'(t) -c(t) -\ve  f_2(t))\partial_y\tilde u + \tilde S[\tilde u],
\ee
where $ f_1(t) =  f_1(c(t),\rho(t))$ and $ f_2(t) =  f_2(c(t),\rho(t))$ are given by
\bea\label{f1}
 f_1(c(t),\rho(t)) & =&  f_1^0(c(t),\rho(t)) + O(a_0'(\ve\rho(t))|c_0(t)-c(t)| ), \\
 f_2(c(t),\rho(t)) &=& f_2^0(c(t),\rho(t)) \label{f2}  \\
 & & + O(a_0'(\ve\rho(t))(|\rho_0(t)-\rho(t)| +|c_0(t)-c(t)| )), \nonu
\eea
with $ f_1^0(t), f_2^0(t)$ defined in (\ref{f10})-(\ref{f20}). Moreover,
\be\label{Rig}
\| \tilde S[\tilde u](t)\|_{H^1(y>-\frac 2\ve)} \lss \ve^{3/2}e^{-\ga \ve |\rho(t)|} + \ve |c(t)-c_0(t)|  e^{-\ve\ga|\rho(t)|} +\ve^3,
\ee
and
\be\label{SIn0}
\abs{\int_\R Q_c \tilde S[\tilde u]} +\abs{\int_\R yQ_c \tilde S[\tilde u]} \lss \ve^2 e^{-\ve\ga|\rho(t)|} +  \ve |c(t)-c_0(t)| e^{-\ve\ga|\rho(t)|}+ \ve^3.
\ee
\end{prop}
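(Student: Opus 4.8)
The plan is to compute $S[\tilde u]$ explicitly by substituting the ansatz $\tilde u = R + \ve d(t) A_{c(t)}(t,y)$ into the operator defined in \eqref{2.2bis}, expand everything in the soliton variable $y = x-\rho(t)$, and organize the result by powers of $\ve$. First I would treat the leading-order piece coming from the pure soliton $R(t,x) = Q_{c(t)}(y)$. Using the chain rule, $\partial_t R = c'(t)\,\Lambda Q_c(y) - \rho'(t)\,Q_c'(y)$, while $(R_{xx}+R^p)_x = (Q_c''+Q_c^p)'(y)$, which by the profile equation \eqref{ecQc} equals $c\,Q_c'(y)$. Collecting these, the soliton contributes $c'(t)\,\Lambda Q_c - (\rho'(t)-c(t))\,Q_c'$ plus the potential term $\ve a_0'(\ve x)Q_{c_0}(x-\rho_0)\cdot R$. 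The key move is to rewrite the potential evaluated at the modulated parameters using \eqref{Estimate}, replacing $Q_{c_0}(x-\rho_0)$ by $Q_c(y)$ at leading order; the frozen-coefficient replacement $a_0'(\ve x)\mapsto a_0'(\ve\rho) = d(t)$ is legitimate because $a_0'$ varies on scale $\ve^{-1}$ while $Q_c$ is localized on scale $O(1)$, the error being $O(\ve)$ in the relevant norms.

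Next I would extract the definition of $f_1$ and $f_2$. The term $\ve d(t) Q_c^2(y)$ (from the potential acting on $R$) and the term $c'\Lambda Q_c$ must be matched so that, after projecting against the generalized kernel of the linearized operator $\mathcal L$ (that is, against $Q_c$ and $yQ_c$, or equivalently $\Lambda Q_c$ and $Q_c'$), the resonant parts are absorbed into the two modulation equations $c'-\ve f_1$ and $\rho'-c-\ve f_2$. This is exactly where the constants $\la_p$ and $\mu_p$ in \eqref{lap}, \eqref{f10}, \eqref{f20} arise: $f_1^0 = -\la_p a_0'(\ve\rho)c^{p/(p-1)}$ is forced by requiring orthogonality of the remainder to $Q_c$, and the power $c^{p/(p-1)}$ comes from the scaling $Q_c(s)=c^{1/(p-1)}Q(\sqrt{c}s)$ applied to $\int Q_c^3/\int Q_c^2$. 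The $O(\cdot)$ correction terms in \eqref{f1}--\eqref{f2} are precisely the controlled discrepancies coming from the $\Lambda Q_c(c_0-c)$ and $Q_c'(\rho_0-\rho)$ pieces of \eqref{Estimate}, each multiplied by $a_0'(\ve\rho)$.

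The construction of $A_c$ is the heart of the matter and the main obstacle. After removing the resonant parts, there remains at order $\ve$ a source term of the form $\ve d(t)\,F_c(y)$ with $F_c$ a specific Schwartz function built from $Q_c, Q_c^2, yQ_c$; the correction $w = \ve d(t) A_{c(t)}(t,y)$ is chosen so that the linearized flow applied to $w$ cancels this source, i.e. $A_c$ solves an equation of the schematic form $(\mathcal L A_c)' \sim F_c$, equivalently $\partial_y(A_c'' - cA_c + pQ_c^{p-1}A_c) = F_c$ after one integration in $y$. The difficulty is twofold: this elliptic-type problem has a solvability (Fredholm) condition tied to the kernel $\{Q_c', \Lambda Q_c\}$, which is exactly what the choice of $f_1,f_2$ guarantees, and the resulting $A_c$ is \emph{not} localized — it is only bounded, exhibiting the dispersive tail advertised in the introduction (hence the footnote about modeling a dispersive tail and the restriction to the region $y>-2/\ve$ in \eqref{Rig}). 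I would construct $A_c$ by integrating the source against the explicit kernel, verify the regularity and $c$-dependence bounds \eqref{Ac} directly from the smooth dependence of $Q_c$ on $c$, and then confront the boundedness-but-not-integrability head on: the cut-off to $y>-2/\ve$ confines the estimate to the region where $A_c$ and its derivatives are controlled, with the tail for $y<-2/\ve$ being handled separately (it lies outside the stated norm).

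Finally, for the remainder bounds \eqref{Rig} and \eqref{SIn0} I would collect all leftover terms: the quadratic and higher interactions $\ve^2 d^2 A_c^2$, $\ve d\,(RA_c)$-type cross terms from the nonlinearity $(\tilde u^p)_x$, the $O(\ve^{1-2\delta_0})$ tail from \eqref{Estimate}, and the $\partial_t w$ contributions proportional to $c'$ and $\rho'$ (which are themselves $O(\ve)$ by \eqref{r1}). The exponential factor $e^{-\ga\ve|\rho(t)|}$ in both bounds is inherited from $d(t) = a_0'(\ve\rho(t))$ through the decay hypothesis \eqref{ahyp} on $a_0'$, so I would carefully track that every term carrying the control carries this factor. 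The weighted estimates \eqref{SIn0} against $Q_c$ and $yQ_c$ are better ($\ve^2$ rather than $\ve^{3/2}$) because pairing against the localized, exponentially decaying $Q_c$ kills the non-integrable dispersive tail of $A_c$ and converts an $L^2$-type bound into a stronger scalar estimate; I expect this gain to follow from the orthogonality conditions imposed in the construction of $A_c$ together with integration by parts to move the $y$-derivative in $\tilde S$ onto the weight.
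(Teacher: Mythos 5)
Your overall architecture matches the paper's: split $S[\tilde u]$ into the pure-soliton part, the linear part acting on $w=\ve d(t)A_c$, and the nonlinear remainder; Taylor-expand $a_0'(\ve x)$ and $Q_{c_0}(x-\rho_0)$ around the modulated parameters; fix $f_1$ by the orthogonality $\int_\R F_1Q_c=0$; solve $(\mathcal L A_c)_y=\tilde F_1$ with $A_c$ bounded but not localized; and read off (\ref{Rig})--(\ref{SIn0}) from the structure of the leftovers. One point is misstated, though it is repairable: $f_2$ is \emph{not} forced by a Fredholm condition of the ODE. Since $(\mathcal L\,\Lambda Q_c)_y=-Q_c'$, the component of the source along $Q_c'$ lies in the range, so the only solvability constraint is $\int_\R\tilde F_1Q_c=0$, which determines $f_1$ alone; in the paper $f_2$ is fixed afterwards by imposing $\int_\R A_cQ_c=0$ (through the coefficient $\delta_c$ of $\Lambda Q_c$ in the decomposition of $A_c$), its explicit value being extracted by pairing the equation with $\int_{-\infty}^y\Lambda Q_c$. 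Your ``project on the generalized kernel'' shortcut conflates these two mechanisms.

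The genuine gap is your treatment of $\partial_t w$. You dismiss it as ``proportional to $c'$ and $\rho'$, which are themselves $O(\ve)$ by (\ref{r1})''; but $\rho'$ is $O(1)$, and, more to the point, $\partial_t A_c$ (at frozen $c$) contains $c_0'-c'$ and $\rho_0'-\rho'$ through the $(c_0-c)$- and $(\rho_0-\rho)$-dependence of the source $\tilde F_1$. These carry the modulation discrepancies $c'-\ve f_1$ and $\rho'-c-\ve f_2$, which are \emph{unknowns} at this stage and admit no a priori pointwise bound. The paper resolves this (it is flagged there as the most problematic term) by decomposing $A_c=A_{c,s}+a_0'(\ve\rho)(\rho_0-\rho)D_c+a_0'(\ve\rho)(c_0-c)E_c$, reabsorbing the pieces proportional to $c'-\ve f_1$ and $\rho'-c-\ve f_2$ into the first two terms of (\ref{Sdeco}) as lower-order corrections to $\partial_c\tilde u$ and $\partial_y\tilde u$, and retaining the residual $\ve a_0'(\ve\rho)(c_0-c)D_c$ --- which is precisely the origin of the middle term $\ve|c-c_0|e^{-\ve\ga|\rho|}$ in (\ref{Rig}), a term your sketch does not account for. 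Without this step the estimate (\ref{Rig}) does not close.
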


\medskip

\begin{proof}
We follow the strategy described in \cite{Mu3}. Suppose that the parameters $(c(t),\rho(t))$ satisfy (\ref{r1}) and (\ref{r2}). From  (\ref{2.2bis}), we have 
\begin{equation}\label{eq:sion}
S[\tilde u]= \bf I +II +III,
\end{equation}
where (we omit the dependence on $t$ and $x$ if it is not necessary) 
\be\label{eq:sion1}
{\bf I} := S[R], \quad {\bf II} = {\bf II}(w) := w_t  +  (w_{xx}  + p R^{p-1} w)_x + \ve a_0'(\ve x) Q_{c_0}(x-\rho_0) w, 
\ee
and for $p=2,3$ or $4$,
\be\label{eq:sion2}
{\bf III} : = \left\{ (R + w)^p - R^p - pR^{p-1}w \right\}_x.
\ee
Recall that $w$ is given by (\ref{defW}). In the next results, we expand the terms in \eqref{eq:sion}. Note that $R(t,x) = Q_{c(t)} (y)$ and $y= x-\rho(t). $

\medskip

\begin{lem}\label{lem:SQ} 
\[
{\bf I}= F_0^{\bf I}(t,y) + \ve  F_1^{\bf I}(t, y)  + \ve^{2(1-\delta_0)}  F_c^{\bf I}(t,y),
\]
where 
\[
F_0^{\bf I}(t,y) := ( c'(t) -\ve f_1(t) ) \partial_c R(t) - (\rho'(t) - c(t)-\ve f_2(t))  \partial_x R(t),
\]
$f_1(t)$ and $f_2(t)$ are given by (\ref{f1})-(\ref{f2}), and
\bea\label{F1Q}
F_1^{\bf I}(t; y) & := & f_1(t) \Lambda Q_c(y)  + a_0'(\ve \rho) Q_c^2(y)  - f_2(t) Q_c'(y)  \nonu \\
& & + (c_0-c)a_0'(\ve \rho) \Lambda Q_c Q_c(y) + (\rho_0-\rho)a_0'(\ve \rho) Q_c' Q_c(y).
\eea
Finally, for all $t\in [0, T] $, one has  $\|F_c^{\bf I}(t,\cdot)\|_{H^1(\R)} \lss e^{-\ve\ga|\rho(t)|} +\ve $.
\end{lem}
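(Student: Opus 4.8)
The plan is to compute $\mathbf{I} = S[R]$ directly by substituting $R(t,x) = Q_{c(t)}(y)$, $y = x - \rho(t)$, into the definition \eqref{2.2bis} of $S[\tilde u]$, and then to group the resulting terms by their order in $\ve$ and their structure. First I would expand the time derivative $R_t$ using the chain rule: since $R = Q_{c(t)}(x-\rho(t))$, we get $R_t = c'(t)\,\Lambda Q_c(y) - \rho'(t)\,Q_c'(y)$ (using the definition \eqref{LaQc} of $\Lambda Q_c$). Next, the dispersive term $(R_{xx} + R^p)_x$ simplifies dramatically because $Q_c$ solves the profile ODE \eqref{ecQc}: indeed $Q_c'' - cQ_c + Q_c^p = 0$ gives $(R_{xx} + R^p)_x = (cR)_x = c\,R_x = c\,Q_c'(y)$. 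Combining these two contributions, the "free" part of $S[R]$ reduces to $c'(t)\,\Lambda Q_c(y) - (\rho'(t) - c(t))\,Q_c'(y)$, which I expect to absorb into $F_0^{\mathbf I}$ after the control term is accounted for.

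The next step is to handle the control contribution, namely $\ve a_0'(\ve x)\,Q_{c_0(t)}(x - \rho_0(t))\,R$. The key idea is a Taylor-type expansion in the slow variable. Because $a_0'$ varies slowly (its argument is $\ve x$) and $R$ is localized near $y = 0$, i.e.\ near $x = \rho(t)$, I would write $a_0'(\ve x) = a_0'(\ve \rho) + \ve(x-\rho)a_0''(\ve \rho) + \dots = a_0'(\ve\rho) + O(\ve |y|)$ on the support of $R$; the exponential localization of $Q_c$ controls the error contributions, giving the $\ve$-gain that feeds the higher-order remainder. Simultaneously I would use the decomposition \eqref{Estimate}, which expands $Q_{c_0}(x-\rho_0)$ around the modulated parameters $(c,\rho)$ to first order in $(c_0 - c)$ and $(\rho_0 - \rho)$, with an $O_{H^1}(\ve^{1-2\delta_0})$ remainder (this is exactly where the constraint \eqref{r2} enters and explains the $\ve^{2(1-\delta_0)}$ order of the $F_c^{\mathbf I}$ term, since the whole control carries a prefactor $\ve$). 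Multiplying these expansions out, the leading $O(\ve)$ piece produces the terms in $F_1^{\mathbf I}$: the $a_0'(\ve\rho)Q_c^2$ term comes from the product of the frozen control with $R$, while the $\Lambda Q_c$ and $Q_c'$ terms are precisely the corrections $f_1(t)\Lambda Q_c$ and $-f_2(t)Q_c'$ that, when moved to the left, complete $F_0^{\mathbf I}$ into the stated form $(c' - \ve f_1)\partial_c R - (\rho' - c - \ve f_2)\partial_x R$. The cross terms $(c_0 - c)a_0'\Lambda Q_c Q_c$ and $(\rho_0 - \rho)a_0' Q_c' Q_c$ arise from pairing the first-order pieces of \eqref{Estimate} against the frozen control, and I would verify that their coefficients match \eqref{f1}--\eqref{f2}.

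The final step is to collect everything not already placed in $F_0^{\mathbf I}$ and $\ve F_1^{\mathbf I}$ into the remainder $\ve^{2(1-\delta_0)}F_c^{\mathbf I}$, and to bound $\|F_c^{\mathbf I}(t,\cdot)\|_{H^1(\R)} \lesssim e^{-\ve\gamma|\rho(t)|} + \ve$. The exponential factor is the crucial point: since the support of the control overlaps the soliton region $y \approx 0$, i.e.\ $x \approx \rho(t)$, the value $a_0'(\ve\rho(t))$ is itself exponentially small when $\rho(t)$ is large and negative (by the decay hypothesis in \eqref{ahyp}), and every term in $F_c^{\mathbf I}$ that carries the control inherits this $e^{-\ve\gamma|\rho|}$ weight; the residual $\ve$ comes from the genuinely second-order-in-slow-variable Taylor remainders that do not carry the full exponential. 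I expect the main obstacle to be bookkeeping rather than deep analysis: one must carefully track which pieces of the two expansions (the slow-variable Taylor expansion of $a_0'$ and the parameter expansion \eqref{Estimate}) land at order $\ve$ versus order $\ve^{2(1-\delta_0)}$, and confirm that the cross terms coming from multiplying an $O(\ve)$ factor by an $O(\ve^{1/2-\delta_0})$ parameter difference (via \eqref{r2}) indeed produce the claimed $\ve^{2(1-\delta_0)}$ scale. The estimates \eqref{Ac} on $A_c$ are not needed here since $\mathbf I = S[R]$ involves only the soliton profile $R$ and not the correction $w$; the $H^1$ bounds follow from the Schwartz-class regularity of $Q_c$, $\Lambda Q_c$, and their $c$-derivatives recorded in \eqref{LaQc}.
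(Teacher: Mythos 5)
Your proposal is correct and follows essentially the same route as the paper: chain rule for $R_t$, the profile ODE \eqref{ecQc} to reduce $(R_{xx}+R^p)_x$ to $cQ_c'$, a Taylor expansion of $a_0'(\ve x)$ about $\ve\rho(t)$ against the localized soliton, the parameter expansion \eqref{Estimate}, and then regrouping with $\pm\ve f_1\Lambda Q_c$, $\pm\ve f_2 Q_c'$ to form $F_0^{\bf I}$ and $F_1^{\bf I}$, with the $\ve^{2(1-\delta_0)}$ remainder coming from the $\ve\cdot O_{H^1}(\ve^{1-2\delta_0})$ cross term exactly as you identify. The only minor imprecision is attributing the smallness of $a_0'(\ve\rho)$ to $\rho$ being ``large and negative''; by \eqref{ahyp} the derivatives $a_0^{(k)}$ decay exponentially in $|x|$ on both sides, which is what the bound $e^{-\ve\ga|\rho(t)|}$ actually uses.
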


\medskip

\begin{proof}[Proof of Lemma \ref{lem:SQ}.] We have
\bee
{\bf I}  & =&   R_t + (R_{xx} +  R^p )_x + \ve a_0'(\ve x)Q_{c_0}Q_c \\
& =&   c'\Lambda Q_c - \rho'  Q_c' +  Q_c^{(3)} + (Q_c^p)' +  \ve a_0'(\ve x) Q_{c_0}Q_c.
\eee
On the other hand, note that via a Taylor expansion,
\bee
a_0'(\ve x)Q_c  =  a_0'(\ve \rho) Q_c + \ve a_0''(\ve \rho) yQ_c + O_{H^1(\R)}(\ve^2).
\eee
Therefore, using the equation satisfied by $Q_c$, namely, $Q_c'' -cQ_c +Q_c^p=0$, (\ref{Estimate}) and (\ref{r2}), we have
\bee
{\bf I} &= & c'\Lambda Q_c - (\rho' -c) Q_c' + Q_c^{(3)} -cQ_c' + (Q_c^p)' + \ve a_0' Q_{c_0}Q_c  \\
& &   +  \ve^2 a_0''y Q_{c_0} Q_c + O_{H^1(\R)}(\ve^3) \\
&  = & (c' -\ve f_1) \Lambda Q_c - (\rho' -c -\ve f_2)Q_c'  \\
& &  +  \ve \big[ f_1 \Lambda Q_c  + a_0' Q_c^2  - f_2 Q_c'  +(c_0-c)a_0' Q_c\Lambda Q_c +(\rho_0-\rho)a_0' Q_c'Q_c \big]  \\
& & + \ve^{2(1-\delta_0)} F_c^{\bf I}(t,y),
\eee
with  $F_c^{\bf I}(t,\cdot )\in \mathcal S(\R)$ and $\| F_c^{\bf I}(t,\cdot) \|_{H^1(\R)} \lss e^{-\ve\ga|\rho(t)|} +\ve.$
\end{proof}

Now we recall the linearized elliptic gKdV operator. Fix $c>0$,  $p=2,3$ or 4, and let
\be\label{defLy}
 \mathcal{L} \bar w := - \bar w_{yy} + c \bar w - p Q_c^{p-1}(y) \bar w, \quad\hbox{ where }\quad  Q_c(y) := c^{\frac 1{p-1}} Q(\sqrt{c} y).
\ee
Here $\bar w=\bar w(y)$. 

\medskip

\begin{lem}\label{lem:dSKdVw} Suppose that $A_c$ satisfies (\ref{Ac}). Let $w$ given by (\ref{defW}). Then the following expansion holds:
\bee
{\bf II} & = &  (c' -\ve f_1)\partial_c w - (\rho' -c -\ve f_2) w_y  - (\mathcal{L} w)_y   \\
& & + \ \ve^2 \big[ a_0'' c A_{c} +  f_1 a_0'  \partial_c A_{c}  \big] + \ve^2  F_c^{\bf II}(t; y),
\eee
with 
\bee
F_c^{\bf II}(t; \cdot) & =&  \ve  a_0''(\ve\rho(t)) f_2(t) A_c + a_0''(\ve\rho(t))  (\rho'(t) - c(t) - \ve f_2(t)) A_c \\ 
& & +\ve^{-1} d(t) \partial_t A_c + O_{H^1(\R)} ( e^{-\ve \ga |\rho(t)|}),
\eee
for some fixed $\ga>0$.
\end{lem}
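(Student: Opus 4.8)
The plan is to expand the expression $\mathbf{II}$ defined in (\ref{eq:sion1}) by inserting the ansatz $w = \ve d(t) A_{c(t)}(t,y)$ from (\ref{defW}), and to organize the resulting terms by powers of $\ve$, identifying at each order the natural dynamical terms, the linearized operator $\mathcal{L}$, and a remainder that is genuinely $O(\ve^2)$ in $H^1$. First I would compute each of the three summands in $\mathbf{II}$ separately. The transport term $w_t$ carries the time dependence of $w$ through three channels: the explicit time variable in $A_{c(t)}(t,y)$, the scaling $c(t)$ entering via $A_{c(t)}$, and the position $\rho(t)$ entering through $y = x-\rho(t)$ and through $d(t) = a_0'(\ve\rho(t))$. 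Differentiating carefully and using $\rho'(t) = c(t) + (\rho'(t)-c(t))$ to split off the transport contribution, the dominant pieces will reorganize into $(c'-\ve f_1)\partial_c w - (\rho'-c-\ve f_2)w_y$ plus the lower-order terms collected into $F_c^{\mathbf{II}}$; note that $\partial_t w$ produces the factor $\ve^{-1} d(t)\partial_t A_c$ after extracting the overall $\ve$ from $w$, which is exactly the term appearing in $F_c^{\mathbf{II}}$.

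Next I would treat the spatial term $(w_{xx} + pR^{p-1}w)_x$. Writing $w = \ve d(t) A_c$ and recalling $R = Q_c(y)$, this equals $\ve d(t)\,\partial_y\big( A_c'' \ve^{?}\big)$ --- more precisely, since $\partial_x = \partial_y$ on functions of $y$, one gets $\ve d(t)\,(A_c{}'' + pQ_c^{p-1}A_c)_y = -\ve d(t)(\mathcal{L}A_c - cA_c)_y$ after substituting the definition (\ref{defLy}) of $\mathcal{L}$. I would use $d(t)=a_0'(\ve\rho)$ together with $-\ve d(t)\,cA_c{}_y$ recombining against the transport contribution, while $-\ve d(t)(\mathcal{L}A_c)_y = -(\mathcal{L}w)_y$ gives the stated linearized term. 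The remaining term $\ve a_0'(\ve x)Q_{c_0}(x-\rho_0)w$ is quadratically small in $\ve$, being $\ve \cdot O(\ve)$, and I would Taylor-expand $a_0'(\ve x)$ about $\ve\rho$ and use (\ref{Estimate}) to replace $Q_{c_0}(x-\rho_0)$ by $Q_c(y)$ up to controlled errors, producing the $\ve^2 a_0'' c A_c$ contribution together with pieces of $F_c^{\mathbf{II}}$.

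The main obstacle I anticipate is the bookkeeping of the $O(\ve^2)$ remainders rather than any conceptual difficulty: one must verify that all the genuinely second-order terms --- those arising from the Taylor expansion of $a_0'(\ve x)$, from $c'(t) = \ve f_1^0 + O(\ve\cdots)$ entering $\partial_c w$, and from the spatial variation of the potential --- assemble exactly into the displayed $\ve^2[a_0'' cA_c + f_1 a_0'\partial_c A_c]$ plus an $F_c^{\mathbf{II}}$ whose $H^1$ size is $O(e^{-\ve\ga|\rho(t)|})$. The exponential decay $e^{-\ve\ga|\rho(t)|}$ is inherited from the fact that $d(t)=a_0'(\ve\rho(t))$ and its derivatives decay exponentially away from the soliton core by the hypothesis $|a_0^{(k)}(x)|\lesssim e^{-\ga_0|x|}$ in (\ref{ahyp}), so that wherever the potential factors evaluate at $\ve\rho(t)$ they supply this factor. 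The verification that $\partial_t A_c$ and $\partial_c A_c$ belong to the right spaces follows from the a priori assumptions (\ref{Ac}), which I would invoke to legitimize all the differentiations under the norm, thereby closing the expansion.
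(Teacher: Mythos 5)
Your overall strategy is the paper's: expand ${\bf II}(w)$ directly for $w=\ve d(t)A_c(t,y)$, recognize $-\ve d(t)(\mathcal L A_c)_y=-(\mathcal L w)_y$ in the spatial part, and let the $+\ve d(t)\,c\,\partial_y A_c$ piece recombine with the transport term $-\rho'(t)w_y$ coming from $y=x-\rho(t)$ (the sign you write in front of $c\,\partial_y A_c$ is a slip, but the mechanism is right). However, there is one concrete misattribution that would prevent the bookkeeping from closing. You claim that the term $\ve^2 a_0''\,c\,A_c$ (and part of $F_c^{\bf II}$) is produced by Taylor-expanding the potential term $\ve a_0'(\ve x)Q_{c_0}(x-\rho_0)w$. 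It is not: that term equals $\ve^2 d(t)\,a_0'(\ve x)Q_{c_0}(x-\rho_0)A_c$, is fully localized by the factor $Q_{c_0}$, and its leading contribution is $\ve^2 (a_0'(\ve\rho))^2 Q_{c_0}A_c$ --- proportional to $(a_0')^2$, carrying a $Q_{c_0}$ weight and no factor of $c$. In the paper it is absorbed wholesale into the remainder $O_{H^1(\R)}(\ve^2 e^{-\ve\ga|\rho(t)|})$.

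The actual source of $\ve^2 a_0''\,c\,A_c$, and of the first two (non-localized) terms of $F_c^{\bf II}$, is the time derivative of the coefficient $d(t)=a_0'(\ve\rho(t))$: the piece $\ve d'(t)A_c$ of $w_t$, with
\[
d'(t)=\ve a_0''(\ve\rho(t))\rho'(t)=\ve a_0''(\ve\rho(t))c(t)+\ve^2 a_0''(\ve\rho(t))f_2(t)+\ve a_0''(\ve\rho(t))\big(\rho'(t)-c(t)-\ve f_2(t)\big).
\]
You do list $d(t)$ as one of the channels of time dependence of $w$, but you never process it, and you assign its output to the potential term instead. As written, your expansion would produce neither $\ve^2 a_0''cA_c$ nor $a_0''(\rho'-c-\ve f_2)A_c$ nor $\ve a_0''f_2A_c$, so the identity in the lemma would not be reached. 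The distinction matters beyond bookkeeping: these terms involve the merely bounded, non-$L^2$ profile $A_c$ with no $Q$-localization, which is precisely why they must be displayed explicitly rather than hidden inside an $O_{H^1}$ remainder. The remaining ingredients of your plan --- $(c'-\ve f_1)\partial_c w$ obtained by splitting $c'(t)\,\ve d(t)\partial_c A_c$, the resulting $\ve^2 f_1 a_0'\partial_c A_c$, and $\ve d(t)\partial_t A_c$ going into $F_c^{\bf II}$ --- agree with the paper.
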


\medskip

\begin{rem}
It turns out that the term $\ve^{-1} d(t) \partial_t A_c$ will be a very problematic term to estimate; for a delicate treatment of this term see (\ref{Marca}).
\end{rem}

\medskip

\begin{proof}
Let $D:=D_c(t,y)$, $y= x-\rho(t)$, be a general, smooth function. We compute
\[
{\bf II}(D) := D_t + (D_{xx}  + p   R^{p-1} D)_x + \ve a_0'(\ve x) Q_{c_0} D.
\]
We have
\bee
{\bf II}(D)& =& c'(t) \partial_c D + D_t  - (\rho'(t)-c(t))D_y \\
& &  +\big[ D_{yy} - c(t) D+ pQ_c^{p-1} D \big]_x + \ve a_0'(\ve x) Q_{c_0} D \\
& = &   D_t  -(\mathcal L D)_y    + (c'(t) - \ve  f_1(t) ) \partial_c D \\
& &   - (\rho'(t) -c(t) -\ve f_2(t) ) D_y  +  O(\ve a_0'(\ve x) Q_{c_0} D) \\
& & + \ve  f_1(t)  \partial_c D - \ve  f_2(t)   D_y.
\eee
We apply this last identity to the function $w=\ve d(t)A_c(t,y)$. We have
\bea
{\bf II}(w) & = &  \ve d'(t) A_c +\ve d(t) \partial_t A_c  - \ve d(t)(\mathcal L A_c)'    + (c'(t) - \ve  f_1(t)) \ve d(t)\partial_c A_c \nonu\\
& &    - (\rho'(t) -c(t) -\ve f_2(t) ) \ve d(t) A_c'  + \ve^2 d (t) f_1(t) \partial_c A_c  + O_{H^1(\R)} (\ve^2 e^{-\ve \ga |\rho(t)|} ) \nonu\\
& = & \ve d(t) (c'(t) - \ve  f_1(t)) \partial_c A_c    - \ve d(t) (\rho'(t) -c(t) -\ve f_2(t) ) A_c'  - \ve d(t)(\mathcal L A_c)' \nonu\\
& &  +\ve^2 [ \ve^{-1} d'(t) A_c + d (t) f_1(t) \partial_c A_c +\ve^{-1} d(t) \partial_t A_c ]  \label{dp}\\
& & + O_{H^1(\R)} (\ve^2 e^{-\ve \ga |\rho(t)|} ). \nonu
\eea
(Recall that $A_c'\in \mathcal S$.)  Now we use the fact that $d(t) =a_0'(\ve \rho(t))$ to compute $d'(t)$. We have
\bee
d'(t) & =&  \ve  a_0''(\ve\rho(t)) \rho'(t) \\
& =&   \ve  a_0''(\ve\rho(t))c(t) + \ve^2  a_0''(\ve\rho(t)) f_2(t)  + \ve a_0''(\ve\rho(t)) (\rho'(t) - c(t) - \ve f_2(t)).
\eee
Replacing in (\ref{dp}) we conclude.
\end{proof}

\medskip

\begin{lem}\label{lem:SintIII} Suppose that $A_c$ satisfy (\ref{Ac}). Then
\be\label{33}
{\bf III} = O_{H^1(\R)} ( \ve^2 e^{-\ve\ga|\rho(t)|}).
\ee
\end{lem}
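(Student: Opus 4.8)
The plan is to prove that the nonlinear remainder term $\mathbf{III}$ defined in (\ref{eq:sion2}) is of size $O_{H^1(\R)}(\ve^2 e^{-\ve\ga|\rho(t)|})$. The key observation is that $\mathbf{III}$ is the quadratic-and-higher part of the Taylor expansion of $(R+w)^p$ around $R$, so schematically $\mathbf{III}$ is the $x$-derivative of a sum of terms of the form $R^{p-j}w^j$ with $j\ge 2$. Since $w = \ve d(t) A_{c(t)}(t,y)$ with $d(t) = a_0'(\ve\rho(t))$, each factor of $w$ carries one power of $\ve$ and, crucially, one factor of $a_0'(\ve\rho(t))$. First I would write out $\mathbf{III}$ explicitly for each $p\in\{2,3,4\}$: for $p=2$ one gets simply $\mathbf{III} = (w^2)_x$, while for $p=3,4$ there are finitely many monomials $R^{p-j}w^j$, $2\le j\le p$, each differentiated in $x$.

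Next I would estimate the leading term, which is the $j=2$ contribution $(R^{p-2}w^2)_x = \ve^2 d(t)^2 (R^{p-2}A_c^2)_x$ (for $p\ge 3$; for $p=2$ the leading term is $(w^2)_x = \ve^2 d(t)^2 (A_c^2)_x$). The two powers of $\ve$ are manifest. To obtain the factor $e^{-\ve\ga|\rho(t)|}$ I would exploit the behaviour of $d(t) = a_0'(\ve\rho(t))$: from the decay hypothesis (\ref{ahyp}), namely $|a_0^{(k)}(x)| \lesssim e^{-\ga_0|x|}$ for $k=1,2,3$, we have $|d(t)| = |a_0'(\ve\rho(t))| \lesssim e^{-\ga_0\ve|\rho(t)|}$, so $d(t)^2 \lesssim e^{-2\ga_0\ve|\rho(t)|} \le e^{-\ve\ga|\rho(t)|}$ for an appropriate choice of $\ga$. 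Thus a single factor $d(t)$ already supplies the required exponential smallness in $\ve|\rho(t)|$, and the second factor of $w$ only helps. For the $H^1$ norm I would use that $R = Q_{c(t)}$ is a fixed Schwartz profile with $c_m\le c(t)\le c_M$, so $\|R^{p-2}\|_{W^{1,\infty}}$ is uniformly bounded, and that $A_c$ together with $A_c'$ are controlled by the assumptions (\ref{Ac}), i.e. $A_c'(t,\cdot)\in L^2(\R)$ and $A_c(t,\cdot)\in L^\infty(\R)$; differentiating $R^{p-2}A_c^2$ and applying the product rule, the resulting terms are all bounded in $L^2$ uniformly in $t$.

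The remaining monomials with $j\ge 3$ are even smaller, carrying at least $\ve^3$ and at least $d(t)^3$, so they are absorbed into the same bound once one notes $\ve^3 e^{-3\ga_0\ve|\rho(t)|} \lesssim \ve^2 e^{-\ve\ga|\rho(t)|}$. I would therefore handle the generic term $R^{p-j}w^j$ by the crude estimate $\|(R^{p-j}w^j)_x\|_{H^1} \lesssim \ve^j |d(t)|^j \lesssim \ve^2 e^{-\ve\ga|\rho(t)|}$, collect the finitely many such terms, and conclude (\ref{33}). The main obstacle I anticipate is purely a bookkeeping one: ensuring that every derivative hitting the various factors (one must go up to two derivatives in $y$ to control the $H^1$ norm of an $x$-derivative) lands on a factor that is actually controlled by (\ref{Ac}) — in particular, the hypotheses only give $A_c'\in L^2$ and $A_c\in L^\infty$, so I would need to verify that no more than one $y$-derivative falls on $A_c$ in each term, using the product/Leibniz structure, or else invoke the additional regularity of $A_c$ that will be established when $A_c$ is constructed explicitly later. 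Since $R$ is Schwartz and contributes arbitrarily many derivatives freely, this is a manageable accounting exercise rather than a genuine difficulty.
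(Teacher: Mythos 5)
Your proposal is correct and follows essentially the same route as the paper: write out ${\bf III}$ explicitly as the $x$-derivative of the finitely many monomials $R^{p-j}w^j$, $j\ge 2$, for each $p=2,3,4$, extract the factor $\ve^2 d^2(t)$ with $d(t)=a_0'(\ve\rho(t))\lesssim e^{-\ga_0\ve|\rho(t)|}$ supplying the exponential gain, and control the $H^1$ norm via the regularity of $A_c$. Your closing concern about derivatives landing on $A_c$ is resolved exactly as you anticipate: the explicit construction in Lemma~\ref{lem:omega} gives $A_c'\in\mathcal S(\R)$ (not merely $L^2$), which is what the paper invokes when it notes $(A_c^p)'\in\mathcal S(\R)$.
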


\begin{proof}
First of all, define ${\bf \tilde{III} } := (R+ w)^p - R^p - p R^{p-1} w$. Then, 
\[
{\bf \tilde{III} }  = 
\begin{cases}
 \ve^2  d^2(t) A_c^2  & \hbox{ if } p=2;\\
 \ve^2 d^2(t) A_c^2[ 3Q_c + \ve d(t) A_c ] & \hbox{ if } p=3;\\  
\ve^2 d^2(t) A_c^2[6   Q_c^2 + 4\ve  d(t) Q_c A_c + \ve^2 d^2(t) A_c^2]&  \hbox{ in the case }  p=4.
\end{cases} 
\] 
Thus taking space derivative we obtain (\ref{33}) (note that $ (A_c^p)'\in \mathcal S$ because $A_c$ satisfies (\ref{Ac})).
\end{proof}

\medskip

\begin{center}
\line(1,0){50}
\end{center}

\medskip

Now we collect the estimates from Lemmas \ref{lem:SQ}, \ref{lem:dSKdVw} and \ref{lem:SintIII}. We obtain that, for all $t$ in a given interval,
\bea\label{Stt}
S[\tilde u] & = &   (c'(t) - \ve f_1(t))\partial_c\tilde u - (\rho'(t) -c(t) -  \ve f_2(t) )  \partial_y \tilde u + \tilde S[\tilde u], 
\eea
with
\bea\label{tStu}
& & \qquad  \tilde S[\tilde u]    =     \ve [F_1(t,y) -d(t)(\mathcal L A_{c})_y]  \\
& & \qquad \quad  + \ \ve^2 \big[ a_0'' (\ve \rho(t))c A_{c} +  f_1 a_0' (\ve \rho(t))\partial_c A_{c}  \big]  +  \ve^2a_0''(\ve \rho(t))  (\rho'(t) -c(t) -  \ve f_2(t) ) A_c \label{a09}\\
& &  \qquad \quad + \ \ve d(t) \partial_t A_c+ \ve^3  a_0''(\ve\rho(t)) f_2(t)A_c +   \ve^2O_{H^1(\R)}(e^{-\ve \ga |\rho(t)|} +\ve). \label{a10}
\eea
In addition, $f_1 (t), f_2(t)$ are given (for the moment) by (\ref{f1})-(\ref{f2}), and
\be\label{F1}
F_1  :=  F_1^{\bf I}  = f_1(t) \Lambda Q_c    - f_2(t)Q_c'  + a_0'(\ve \rho(t)) [ Q_c^2 + (c_0-c) \Lambda Q_c Q_c + (\rho_0-\rho) Q_c' Q_c],
\ee
(cf. (\ref{F1Q})). Now we give an explicit value of $f_1(t)$, satisfying (\ref{f1}). It is not difficult to check that, for any $t$, there is a well-defined $f_1(t) \in \R$ such that 
\be\label{Or}
\int_\R F_1(t, y) Q_c(y) dy= 0.
\ee
More explicitly, using (\ref{LaQc}), we have
\bee
f_1(t) &=& - a_0'(\ve \rho(t)) \Big( \int_\R \Lambda Q_c Q_c\Big)^{-1} \int_\R [ Q_c^3 + (c_0-c) \Lambda Q_c Q_c^2 ] \\
&=& f_1^0(t) -\frac{2(7-p)}{3(5-p)}  (c_0-c)a_0'(\ve \rho(t)) c^{\frac 1{p-1}} \frac{\int_\R Q^3}{\int_\R Q^2},
\eee
with $f_1^0$ defined in (\ref{f10}). This and (\ref{r2}) proves (\ref{f1}).

\medskip

The next step is the resolution of the linear differential equation involving the first order terms in $\ve$. Indeed, from (\ref{tStu}), we want to solve
\[
 d(t)(\mathcal L A_{c})_y (y) = F_1(t, y),  \quad \hbox{ for all  } y\in \R, \; \hbox{ and}  \ t \hbox{  fixed;}
\]
with $d(t)$ given by (\ref{defW}). Note that from (\ref{f1})-(\ref{f2}) and (\ref{F1}) one has
\bea\label{F1new}
F_1(t; y) &  := & a_0' [ -\la_p  c^{\frac p{p-1}} \Lambda Q_c   + Q_c^2 +(c_0-c)\Lambda Q_c Q_c + (\rho_0-\rho) Q_c' Q_c ] - f_2(t)Q_c', \nonumber \\
& = & a_0' [ -\la_p  c^{\frac p{p-1}} \Lambda Q_c  + Q_c^2  - (a_0')^{-1} f_2   Q_c' +(c_0-c)\Lambda Q_c Q_c + (\rho_0-\rho) Q_c' Q_c]  \nonu\\
& =: &d(t) \tilde F_1(t,y).
\eea 
Therefore, we are reduced to solve the following simple problem,
\be\label{Simpl}
(\mathcal L A_{c})_y (y) =  \tilde F_1(t,y),
\ee
with $\tilde F_1$ defined in (\ref{F1new}), and from (\ref{Or}),
\[
\int_\R \tilde F_1(t,y) Q_c(y) =0.
\]
Let us recall the following results (see e.g. \cite{MMcol1}):

\medskip

\begin{lem}\label{surL} The operator $\mathcal{L}$ defined (on $L^2(\R)$) by \eqref{defLy}  has domain $H^2(\R)$, it is self-adjoint and satisfies the following properties:
\begin{enumerate}
\item The kernel of $\mathcal{L}$ is spanned by $Q'_c$. Moreover, $\Lambda Q_c$ defined in (\ref{LaQc}) satisfies $\mathcal{L} (\Lambda Q_c)=- Q_c$. Finally, the continuous spectrum of $\mathcal L$ is given by $\sigma_{cont}(\mathcal L) =[c,+\infty)$.

\smallskip

\item For all   $h=h(x) $ \emph{ polynomially growing}  function such that $\int_\R h Q_c'=0$, there exists a unique  \emph{ polynomially growing}  function $\hat h $   such that $\int_\R \hat hQ'_c=0$ and $\mathcal{L} \hat h = h$. Moreover,  if $h$ is even (resp. odd), then $\hat h$ is even (resp. odd).

\smallskip

\item  For $h\in H^2(\mathbb{R})$,  $\mathcal{L} h \in \mathcal{S}(\R)$ implies $h\in \mathcal{S}(\R)$.
\end{enumerate}
\end{lem}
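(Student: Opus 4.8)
The plan is to regard $\mathcal{L}$ as a one-dimensional Schr\"odinger-type operator $-\partial_y^2 + c - pQ_c^{p-1}(y)$ whose potential is smooth and exponentially decaying, and to read off the algebraic facts in item (1) directly from the soliton ODE \eqref{ecQc}. Differentiating $Q_c'' - cQ_c + Q_c^p = 0$ in $y$ gives $\mathcal{L}Q_c' = 0$, so $Q_c' \in \ker \mathcal{L}$; differentiating the same identity in the scaling parameter $c$ and using the definition \eqref{LaQc} of $\Lambda Q_c = \partial_c Q_c$ yields $\mathcal{L}(\Lambda Q_c) = -Q_c$. To see that $Q_c'$ spans the kernel, I would use that the homogeneous ODE $\mathcal{L}\phi = 0$ is second order, hence its solution space is two-dimensional; since $Q_c'$ decays exponentially and the Wronskian of any two solutions is constant, a second independent solution must grow like $e^{\sqrt{c}|y|}$ at infinity and therefore leaves $L^2(\R)$. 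Finally, because $pQ_c^{p-1} \to 0$ exponentially, $\mathcal{L}$ is a relatively compact perturbation of $-\partial_y^2 + c$, so by Weyl's theorem on the essential spectrum $\sigma_{cont}(\mathcal{L}) = \sigma_{cont}(-\partial_y^2 + c) = [c,+\infty)$.

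For item (2) I would solve the ODE $\mathcal{L}\hat h = h$ by variation of parameters using the basis $\phi_1 = Q_c'$ (the decaying kernel element) and a second solution $\phi_2$ growing like $e^{\sqrt{c}|y|}$. Writing the particular solution through the standard Green representation, the coefficient multiplying the growing mode $\phi_2$ is governed by an integral of the form $\int^y \phi_1 h$; since $\phi_1 = Q_c'$ decays exponentially while $h$ grows only polynomially, these tail integrals converge, and the global orthogonality condition $\int_\R Q_c' h = 0$ is exactly what cancels the would-be exponential growth at one of the two infinities. Choosing the free constants of integration to suppress the remaining growing contribution then produces a solution $\hat h$ that is at most polynomially growing. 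Uniqueness under the normalization $\int_\R \hat h Q_c' = 0$ follows because the difference of two such solutions lies in the kernel of $\mathcal{L}$ within the class of polynomially growing functions --- and that kernel is just $\R Q_c'$, since $\phi_2$ grows exponentially --- so the difference is $\alpha Q_c'$ with $\alpha \int_\R (Q_c')^2 = 0$, forcing $\alpha = 0$. The parity statement is then immediate: $\mathcal{L}$ commutes with the reflection $y \mapsto -y$ (as $Q_c$ is even), so if $h$ is even then $\hat h(-y)$ solves the same problem with the same normalization, and uniqueness forces $\hat h$ even; the odd case is identical.

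For item (3), I would bootstrap regularity and then decay. Rewriting the equation as $h'' = ch - pQ_c^{p-1}h - \mathcal{L}h$ and using that $Q_c^{p-1}$ is smooth with all derivatives bounded and that $g := \mathcal{L}h \in \mathcal{S}(\R)$, the right-hand side lies in $H^2$ whenever $h \in H^2$; hence $h \in H^4$, and iterating gives $h \in \bigcap_k H^k(\R)$, in particular $h \in C^\infty$ with bounded derivatives. For the decay, I observe that the forcing $g + pQ_c^{p-1}h$ decays exponentially (since $g$ is Schwartz and $Q_c^{p-1}$ decays exponentially while $h$ is bounded), so the bounded solution of $-h'' + ch = g + pQ_c^{p-1}h$ is the convolution of this forcing with the exponentially decaying Green kernel $\tfrac{1}{2\sqrt{c}}e^{-\sqrt{c}|y|}$, giving exponential decay of $h$; differentiating the equation propagates this decay to all derivatives, so $h \in \mathcal{S}(\R)$.

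The main obstacle I expect is item (2): one must carefully control the two variation-of-parameters integrals at $+\infty$ and $-\infty$ and verify that the single scalar condition $\int_\R Q_c' h = 0$ suffices to remove the exponentially growing mode and leave only polynomial growth. This is a Fredholm-type alternative \emph{outside} the $L^2$ framework, so the bookkeeping of the constants of integration and of the asymptotics of $\phi_2$ is the delicate point; items (1) and (3) are comparatively routine consequences of the ODE identities and of elliptic bootstrapping.
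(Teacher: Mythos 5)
Your proof is correct. The paper itself does not prove this lemma---it is simply recalled from Martel--Merle \cite{MMcol1} with a citation and no argument---and your proof (differentiating the soliton ODE in $y$ and in $c$ for the algebraic identities, the constant-Wronskian/growth dichotomy to show the second homogeneous solution leaves $L^2$, Weyl's theorem for the spectrum, variation of parameters in which the single condition $\int_\R hQ_c'=0$ removes the exponentially growing mode, and elliptic bootstrap plus convolution with the kernel $\frac{1}{2\sqrt c}e^{-\sqrt c|y|}$ for item (3)) is essentially the standard argument found in that reference. The one point worth tightening is in item (1): Weyl's theorem identifies the \emph{essential} spectrum as $[c,+\infty)$, and to equate this with the continuous spectrum one should also note the absence of embedded eigenvalues and of singular continuous spectrum, which holds for exponentially decaying potentials by standard one-dimensional scattering theory.
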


Let $c>0$ and
\be\label{varfi}
\varphi(x):=-\frac {Q'(x)}{Q(x)}, \qquad  \varphi_c (x) := -\frac{Q_c'}{Q_c} = \sqrt{c} \varphi(\sqrt{c} x).
\ee
Note that $\varphi$ is an odd function, with
\be\label{surphi}
\lim_{x\to \pm \infty} \varphi(x)=\pm 1; \quad \varphi^{(k)} \in \mathcal{S}(\R), \; k\geq 1.
\ee
We recall the form of the solution $A_c$ that we are looking for. We seek for a bounded solution satisfying
\be\label{eq:st}
A_{c(t)}(t,y) = \beta_c(t) (\varphi_c(y) - \sqrt{c(t)}) + \hat A_c(t,y) + \mu_c(t) Q_c'(y) + \delta_c(t) \Lambda Q_c(y),
\ee
for some $\beta_c(t), \mu_c(t),  \delta_c(t) \in \R$, $\varphi_c$ defined in (\ref{varfi}), and $ \hat A_c(t,\cdot)\in \mathcal S(\R)$. The parameters $\mu_c$ and $\delta_c$ will be chosen in order to find the \emph{unique} solution $A_c$ satisfying some orthogonality conditions. 

\medskip

\begin{lem}\label{lem:omega} Suppose $(c(t), \rho(t))$ satisfying (\ref{r1}) and (\ref{r2}), and $f_1(t)$, $f_2(t)$ given by (\ref{f1})-(\ref{f2}). There exists a unique solution $A_{c}= A_{c(t)}(t,y)$ of
\be\label{A10}
(\mathcal{L}A_{c} )_y(t,y) = \tilde F_1(t,y), 
\ee
such that, for every $t$,  
\bea\label{Acy}
& & A_{c} (t,y) :=  \beta_c(t) (\varphi_c(y) -\sqrt{c}) + \hat A_{c}(t,y) + \mu_c(t) Q_c' (y)+  \delta_c(t) \Lambda Q_c(y), \\
& & \lim_{-\infty} A_c = -2\sqrt{c} \beta_c; \quad  |A_{c}(y)|\leq K e^{-\ga y}, \; \hbox{ as } y\to +\infty,   \label{LI}% \frac{3\la_0 -\la c^{-1}}{5-m}\int_y^{\infty} Q_c  + \hat A_{1,c}(y),  
\eea
with $\hat A_{c}(t) \in \mathcal S(\R)$ for all $t$.  In addition, we have\footnote{Note that $\beta_c=0$ implies $A_c\in L^2(\R)$.}
\be\label{constants}
\beta_c(t) := \frac 1{2c^{3/2}(t)} \int_\R  \tilde F_1(t,y)dy \neq 0, \quad  |\beta_c(t)|+ |\mu_c(t)| +|\delta_c(t)| \lss 1.
\ee
Finally, $A_{c}$ satisfies
\be\label{Or3}
\int_\R A_{c} (t,y)Q_c(y)dy =\int_\R A_{c}(t,y) y Q_c(y)dy =0.
\ee
\end{lem}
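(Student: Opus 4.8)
The plan is to solve the linear ODE in $y$, namely $(\mathcal{L}A_c)_y = \tilde F_1$, by integrating once and then applying the structural properties of $\mathcal{L}$ from Lemma \ref{surL}. First I would integrate \eqref{A10} in $y$ and use the orthogonality $\int_\R \tilde F_1 Q_c = 0$ (which holds by the choice of $f_1(t)$ in \eqref{Or}) to identify the right-hand side of the integrated equation. The key point is to understand the behavior at $\pm\infty$: since $\tilde F_1$ is a combination of Schwartz functions ($\Lambda Q_c$, $Q_c^2$, $Q_c'$, $\Lambda Q_c Q_c$, $Q_c' Q_c$), its primitive $\int_{-\infty}^y \tilde F_1$ tends to a nonzero constant as $y\to+\infty$ (namely $\int_\R \tilde F_1$) and to $0$ as $y\to-\infty$. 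This constant tail is precisely what forces the solution $A_c$ to be nonlocalized, and explains the ansatz \eqref{eq:st} built from $\varphi_c(y)-\sqrt{c}$, whose nonzero asymptotic limits absorb the constant.

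Next I would substitute the ansatz $A_c = \beta_c(\varphi_c - \sqrt c) + \hat A_c + \mu_c Q_c' + \delta_c \Lambda Q_c$ into $\mathcal{L}A_c$. Since $Q_c'$ lies in the kernel of $\mathcal{L}$ (Lemma \ref{surL}(1)) and $\mathcal{L}(\Lambda Q_c) = -Q_c$, the terms $\mu_c Q_c'$ and $\delta_c \Lambda Q_c$ contribute controllably and serve only to enforce the two orthogonality conditions \eqref{Or3}. The term $\varphi_c - \sqrt c$ should be computed explicitly: using \eqref{varfi} and the equation for $Q_c$, I expect $\mathcal{L}(\varphi_c - \sqrt c)$ to be an explicit Schwartz function whose integral is nonzero, which pins down $\beta_c$. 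Matching the constant-at-infinity part of $\int^y \tilde F_1$ against $\mathcal{L}(\varphi_c)$ then yields the formula $\beta_c = \frac{1}{2c^{3/2}}\int_\R \tilde F_1$; the factor $2\sqrt{c}$ comes from the jump $\varphi_c(+\infty)-\varphi_c(-\infty)$ via \eqref{surphi}. After subtracting the $\beta_c$ piece, the remaining equation $\mathcal{L}\hat A_c = (\text{Schwartz RHS})$ with a Schwartz right-hand side orthogonal to $Q_c'$ is solvable with $\hat A_c \in \mathcal{S}(\R)$ by Lemma \ref{surL}(2)--(3).

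The orthogonality conditions \eqref{Or3} are then imposed to fix $\mu_c$ and $\delta_c$: projecting $A_c$ against $Q_c$ and $yQ_c$ gives a $2\times 2$ linear system, and I would check its solvability by noting that $\int_\R Q_c' \cdot Q_c = 0$ while $\int_\R \Lambda Q_c \cdot Q_c \neq 0$ (from \eqref{LaQc}), so the system decouples or is at least invertible. The uniform bounds $|\beta_c| + |\mu_c| + |\delta_c| \lesssim 1$ in \eqref{constants} follow from the explicit formulas together with the a priori constraints \eqref{r1}, \eqref{r2}, which keep $c(t)\in[c_m, c_M]$ and the coefficients $(c_0-c)$, $(\rho_0-\rho)$ bounded. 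The asymptotics \eqref{LI} are read off directly from the ansatz: as $y\to-\infty$ the Schwartz pieces vanish and $\varphi_c\to-\sqrt c$, giving $A_c\to -2\sqrt c\,\beta_c$, while as $y\to+\infty$ the combination $\varphi_c-\sqrt c$ decays exponentially (by \eqref{surphi}), so $A_c$ decays like $e^{-\gamma y}$.

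The main obstacle I anticipate is the careful bookkeeping at spatial infinity: one must verify that $\beta_c\neq0$ (so that $A_c$ is genuinely nonlocalized and cannot be taken in $L^2$, consistent with the dispersive-tail interpretation), and that the constant tail of the primitive is matched \emph{exactly} by the $\varphi_c$ ansatz, leaving a Schwartz remainder for $\hat A_c$. A secondary subtlety is confirming that $A_c$ satisfies the regularity assumptions \eqref{Ac}, i.e. $A_c'(t,\cdot)\in L^2(\R)$ and $\partial_c A_c(t,\cdot)\in L^\infty(\R)$; the first holds because differentiating kills the constant and leaves Schwartz-plus-exponentially-decaying terms, while the second requires tracking the smooth $c$-dependence of $\beta_c$, $\mu_c$, $\delta_c$ and of $\varphi_c$, $\hat A_c$, which I would justify by the smooth dependence of all ingredients on the parameter $c\in[c_m,c_M]$.
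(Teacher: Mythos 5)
Your overall route is the same as the paper's: integrate $(\mathcal L A_c)_y=\tilde F_1$ once, absorb the resulting jump at spatial infinity with the $\beta_c(\varphi_c-\sqrt c)$ piece (whence $\beta_c=\frac1{2c^{3/2}}\int_\R\tilde F_1$ from the jump $\varphi_c(+\infty)-\varphi_c(-\infty)=2\sqrt c$), solve for the Schwartz remainder via Lemma \ref{surL}, and use $\mu_c,\delta_c$ for the orthogonality conditions. However, there are concrete gaps. First, a local error: $\mathcal L(\varphi_c-\sqrt c)$ is \emph{not} a Schwartz function --- a direct computation using $\varphi_c'=-c+Q_c^{p-1}+\varphi_c^2$ gives $\mathcal L\varphi_c=3c\varphi_c-2\varphi_c^3-3Q_c^{p-1}\varphi_c\to\pm c^{3/2}$, so $\mathcal L(\varphi_c-\sqrt c)\to-2c^{3/2}$ as $y\to-\infty$. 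That nonzero limit is precisely what matches the constant tail of the primitive of $\tilde F_1$; your next sentence uses the correct mechanism, but the "Schwartz with nonzero integral" claim, taken literally, would break the reduction to $\mathcal L\hat A_c=(\hbox{Schwartz})$. Second, and more seriously, $\delta_c$ is \emph{not} a free parameter on the same footing as $\mu_c$: since $(\mathcal L(\delta_c\Lambda Q_c))_y=-\delta_c Q_c'$, adding $\delta_c\Lambda Q_c$ changes the equation unless it is compensated by the term $-(a_0')^{-1}f_2\,Q_c'$ inside $\tilde F_1$ (see (\ref{F1new})); i.e.\ $\delta_c$ and $f_2$ are the same degree of freedom. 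Your $2\times 2$ system therefore implicitly \emph{determines} $f_2$, and one must still verify that this $f_2$ has the form (\ref{f2}) with leading term $f_2^0$ from (\ref{f20}) --- otherwise the hypothesis of the lemma is not met, the bound $|\delta_c|\lss 1$ is unproven, and the value $\mu_p$ (with $\mu_3=0$) used in the dynamical system elsewhere in the paper is never obtained. The paper does this by a duality computation: multiply (\ref{A10}) by $\int_{-\infty}^y\Lambda Q_c$, integrate by parts using $\int_\R Q_cA_c=0$, and extract an explicit formula for $f_2$. This step is absent from your proposal.

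Third, the nonvanishing $\beta_c\neq 0$ is only flagged as an "anticipated obstacle" but never established; it requires actually computing $\int_\R\tilde F_1= c^{2\theta}\big[(\int_\R Q^2)^2-\frac{2(3-p)}{5-p}\int_\R Q\int_\R Q^3\big](\int_\R Q^2)^{-1}+O(|c_0-c|)$ and checking it is nonzero. For $p=3,4$ this is immediate, but for $p=2$ the combination nearly degenerates and one needs the explicit identities $\int_\R Q=\int_\R Q^2$ and $\int_\R Q^3=\frac65\int_\R Q^2$ (from $Q''=Q-Q^2$ and $Q'^2=Q^2-\frac23Q^3$) to land on the positive value $\frac15(\int_\R Q)^2$. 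Without this, the structural claim that $A_c\notin L^2$ (and hence the necessity of the cut-off in Proposition \ref{CV}) is not justified. These three items --- especially the coupling of $\delta_c$ with $f_2$ --- need to be supplied before the argument is complete.
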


\begin{proof}
First of all, the existence of a solution $A_c(t,\cdot) \in L^\infty(\R)$ of the form (\ref{Acy}) for this equation was established in \cite{Mu2}, provided 
\[
\int_\R \tilde F_1(t , y)Q_c(y)dy =0,
\]
which is indeed the case (cf. (\ref{Or})). Note  that  the inclusion of the term proportional to $f_2(t)Q_c'$ in (\ref{F1}) induces the new term $\delta_c \Lambda Q_c$ in (\ref{Acy}) (recall that from Lemma \ref{surL}  $(\mathcal L \Lambda Q_c)' =-Q_c' $.) Furthermore, the limits in (\ref{LI}) are straightforward from (\ref{surphi}).

\medskip

Now, let us prove (\ref{constants}). Indeed, from (\ref{A10}), integrating over $\R$ and using (\ref{LI}), we get
\be\label{Acminf}
2\beta_c \ c \sqrt{c}   = c A_c(-\infty)= \mathcal L A_c(+\infty) -\mathcal L A_c(-\infty)   = \int_\R\tilde F_1 ,
\ee
which gives the value of $\beta_c$, and the corresponding bound. Moreover,
\bee
\int_\R\tilde F_1 & =&  c^{2\theta} \int_\R Q^2 -\la_p c^{2\theta} (\frac 1{p-1} -\frac 12 )\int_\R Q  +(c_0-c) \theta c^{2\theta-1} \int_\R Q^2  \\
& =&  c^{2\theta} \Big[\big(\int_\R Q^2\big)^2 - \frac{2(3-p)}{(5-p)} \int_\R QÊ\int_\R Q^3 \Big] \big(\int_\R Q^2\big)^{-1} + O(|c_0-c|)\neq 0,
\eee
for $p=3,4$. The case $p=2$ requires more care, but a simple computation gives a nonzero final value: note that from the identities $Q'' =Q -Q^2$ and $Q'^2 = Q^2 -\frac 23 Q^3$, one has
\[
\int_\R Q = \int_\R Q^2, \quad \int_\R Q^3 = \frac 65\int_\R Q^2 =\frac 65\int_\R Q.
\]
Therefore $\big(\int_\R Q^2\big)^2 - \frac{2}{3} \int_\R QÊ\int_\R Q^3 = (1-\frac 45)\big(\int_\R Q\big)^2>0$.
\medskip

On the other hand, we choose the terms $\mu_c$ and $\delta_c$ in order to satisfy (\ref{Or3}). The parameter $\mu_c(t)$ is chosen to satisfy the condition
\[
\int_\R yQ_c A_c =0,
\]
and it does not give any problem. In order to deal with  $\delta_c$, we need more information about $f_2(t)$. Since we do not explicitly know $A_c$, we need another method to compute an explicit expression for $f_2(t)$, satisfying (\ref{f2}) (and therefore, the corresponding bounds for  $\delta_c(t)$).  Indeed, multiplying (\ref{A10}) by $\int_{-\infty}^y \Lambda Q_c \in L^\infty(\R)$ and integrating, one has
\be\label{Beg}
\int_\R (\mathcal L A_c)_y \int_{-\infty}^{y} \Lambda Q_c = \int_\R \tilde F_1 \int_{-\infty}^{y} \Lambda Q_c .
\ee
Integrating by parts, we get
\[
(\mathcal L A_c) \int_{-\infty}^{y} \Lambda Q_c \Big|_{-\infty}^{+\infty} +  \int_\R (\mathcal L A_c)_y \int_{-\infty}^{y} \Lambda Q_c = -\int_\R \Lambda Q_c \mathcal L A_c  =\int_\R Q_c A_c =0.
\]
Using (\ref{Ac}), we have $(\mathcal L A_c) \int_{-\infty}^{y} \Lambda Q_c \Big|_{-\infty}^{+\infty} =0$. Therefore, from (\ref{F1new}), 
\bee
& & -f_2 \int_\R Q_c \Lambda Q_c = \\
& & \qquad = a_0'\int_\R \Big[ -\la_p  c^{\frac p{p-1}}  \Lambda Q_c  + Q_c^2 +(c_0-c)\Lambda Q_c Q_c + (\rho_0-\rho) Q_c' Q_c  \Big]\int_{-\infty}^y \Lambda Q_c.
\eee
A simple computation using the scaling of $Q_c$, $\Lambda Q_c$ and its derivatives, and integration by parts show that, for  $\theta = \frac1{p-1}-\frac 14$,
\bee
- \theta  f_2 c^{2\theta -1} \int_\R Q^2  & = &  a_0' \Big[  -\frac 12\la_p  c^{\frac p{p-1}}   \big(\int_\R \Lambda Q_c\big)^2  +\int_\R  Q_c^2 \int_{-\infty}^y \Lambda Q_c  \\
& & \qquad +(c_0-c) \int_\R \Lambda Q_c Q_c\int_{-\infty}^y \Lambda Q_c -\frac12 (\rho_0-\rho)\int_\R Q_c^2 \Lambda Q_c \Big] \\
&  =&  \frac{(3-p)}{2(p-1)} a_0' c^{\frac{5-2p}{p-1}} \Big[ -\frac 14 \la_p\frac{(3-p)}{p-1}   \big(\int_\R Q\big)^2  +\int_\R Q^2 \int_{-\infty}^y Q  \Big] \\
& & +(c_0-c)a_0'  c^{\frac{3(2-p)}{p-1}} \int_\R Q\Lambda Q \int_{-\infty}^y \!\!\! \Lambda Q  \\
& &  -\frac{(7-p)}{12(p-1)} (\rho_0-\rho) a_0' c^{\frac{3(3-p)}{2(p-1)}} \int_\R Q^3 .
\eee
We finally obtain
\bee
& & f_2(t) =\\
&  & \quad = \frac{2(p-3)}{(5-p)} a_0' (\ve \rho(t)) [c(t)]^{\frac{2(5-2p)}{7-3p}} \Big[ -\frac 14 \la_p\frac{(3-p)}{p-1}   \big(\int_\R Q\big)^2  +\int_\R Q^2 \int_{-\infty}^y Q\Big] \big(\int_\R Q^2\big)^{-1}\\
& & \quad  -\frac{5-p}{4(p-1)}(c_0(t)-c(t)) a_0' (\ve \rho(t))  [c(t)]^{\frac{6(2-p)}{7-3p}}  \big(\int_\R Q\Lambda Q \int_{-\infty}^y \!\!\! \Lambda Q \big) \big(\int_\R Q^2\big)^{-1}  \\
& & \quad  + \frac{(7-p)}{3(5-p)} (\rho_0(t)-\rho(t)) a_0' (\ve \rho(t)) [c(t)]^{\frac{3(3-p)}{7-3p}}  \big(\int_\R Q^3 \big)  \big(\int_\R Q^2\big)^{-1} \\
& & \quad = :\mu_p a_0'(\ve \rho(t)) [c(t)]^{\frac{2(5-2p)}{7-3p}}  +  O(a_0' (\ve \rho(t))  (|c_0(t)-c(t)| +  |\rho_0(t)-\rho(t)| ) ),
\eee
as desired\footnote{Note that the exponents $\frac{6(2-p)}{7-3p}$ and $\frac{3(3-p)}{7-3p}$ are both nonnegative for $p=2,3$ and 4.} (cf. (\ref{f2}) and (\ref{f20}), and note that $\mu_3=0$). Note that from (\ref{f2}) we have that $\delta_c(t)$ satisfies the required estimates.

\end{proof}

\medskip

Having solved the  linear problem, from (\ref{Stt}) and (\ref{tStu}) we have
\[
S[\tilde u](t,x)  =    (c'(t) - \ve f_1(t) )\partial_c\tilde u - (\rho'(t) -c(t) -  \ve f_2(t))  \partial_y \tilde u + \tilde S[\tilde u](t,x), 
\]
where $\tilde S[\tilde u]$ given in (\ref{a09})-(\ref{a10}) will be of second order in $\ve$, as we show in the following lines.

\medskip

Let us describe the dependence on $c$ and $t$ of the solution $A_c$. From (\ref{F1new}) (see also Lemma 4.5 in \cite{Mu2}), one has
\[
\tilde F_1 (t,y)  = c^{\frac 2{p-1}}  \tilde F_1^1(\sqrt{c}y)  + c^{\frac{32p-13-11p^2}{2(p-1)(7-3p)}}  \tilde F_1^2(\sqrt{c}y) + O_{H^1(\R)}(|c-c_0| + |\rho-\rho_0|), 
\]
where
\[
\tilde F^1_1(y)  :=  -\la_p  \Lambda Q(y)   +Q^2(y), \quad \tilde F^2_1(y) := Q'.
\]
Therefore, Claim 3 in \cite{Mu2} allows to conclude that $A_c$ satisfies the following decomposition:
\be\label{ScaA}
A_c(t,y) = c^{\frac {7-3p}{2(p-1)}}  \tilde A_1^1(t,\sqrt{c}y) + c^{\frac{4 +p-p^2}{(p-1)(7-3p)}}  \tilde A_1^2(t,\sqrt{c}y) +  O_{L^\infty(\R)}(|c-c_0| + |\rho-\rho_0|), 
\ee
with $ \tilde A_1^1$ bounded solution of $(\mathcal L \tilde A_1^1)' =  \tilde F_1^1$, and $ \tilde A_1^2(\sqrt{c}y)\in \mathcal S(\R)$. Moreover, one has $ (\tilde A_1^1)' \in \mathcal S(\R)$. Using this decomposition we have that, avoiding the terms proportional to  $|c-c_0|$, $\partial_c A_c$ has the same behavior as $A_c$: it is bounded, it is not $L^2$-integrable, and satisfies $\lim_{+\infty} \partial_c A_c =0$, $\lim_{-\infty} \partial_c A_c \neq0$. The same result holds for $\partial_c^2 A_c$.

\medskip

We consider now the term $\partial_t A_c$, avoiding the terms with usual derivatives with respect to $c$ and $\rho$.
%\be\label{Marca}
%\partial_t A_c = \beta_c'(t) (\varphi_c(y) -\sqrt{c}) + \partial_t \hat A_{c}(t,y) + \mu_c'(t) Q_c' (y)+  \delta_c'(t) \Lambda Q_c(y).
%\ee
In fact, $\partial_t A_{c}(t,y)$ involves derivatives with respect to $t$ of $(c_0-c)$ and $(\rho_0-\rho)$. More specifically, from the explicit composition of $F_1$ in (\ref{F1new}) the solution $A_c$ can be decomposed as follows
\bee
A_c(t,y) & =&  A_{c,s}(t,y)  + a_0'(\ve \rho(t)) (\rho_0(t)-\rho(t)) D_c(t,y) \\
& & + a_0'(\ve \rho(t)) (c_0(t)-c(t)) E_c(t,y),  
\eee
where $A_{c,s}$ is the solution of 
\[
(\mathcal L A_{c,s})_y = a_0' [ -\la_p  c^{\frac p{p-1}} \Lambda Q_c+ Q_c^2 ],
\]
$D_c$ solves 
\[
\mathcal LD_c = \frac 12 Q_c^2  - (a_0')^{-1} f_2   Q_c,
\]
and $E_c$ is the solution of 
\[
(\mathcal L E_c)_y = \Lambda Q_c Q_c.
\]
It is clear that  $D_c (t,\cdot)\in \mathcal S(\R)$ and $E_c(t,\cdot) \in L^\infty(\R)$. Since the term $A_{c,s}(t,y)$ only contains derivatives in time already computed in (\ref{Stt}),  we get
\be\label{Marca}
\partial_t A_c = a_0'(\ve \rho(t)) (\rho_0'(t)-\rho'(t)) D_c(t,y) + a_0'(\ve \rho(t)) (c_0'(t)-c'(t)) E_c(t,y).
\ee
Note that 
\bee
c_0'-c' & =& \ve (f_1^0(c_0,\rho_0) -f_1(c,\rho)) + O(|c' -\ve f_1|) \\
& =&  O(\ve e^{- \ga\ve|\rho_0|}+\ve e^{- \ga\ve|\rho|} + |c' -\ve f_1|).
\eee
Note that the second term can be added to the dynamical system (\ref{Stt}) without perturbing the dynamics. The worst case is with no doubt the first one. We have
\[
|\rho_0'-\rho'| \lss |c_0-c| + |\rho' -c -\ve f_2| + O(\ve).    
\]
The term $|\rho' -c -\ve f_2| $ can be added to the dynamical system (\ref{Stt}) as in the previous case. In concluding, without considering the terms proportional to $|\rho' -c -\ve f_2|$ and $|c'-\ve f_1|$,
\bee
\tilde S[\tilde u] & =&  (\ref{a09}) + (\ref{a10}) \\
& =&  O(\ve^2 e^{- \ga\ve|\rho_0|}+\ve^2 e^{- \ga\ve|\rho|} ) + \ve a_0'(\ve \rho(t)) (c_0(t)-c(t)) D_c(t,y),
\eee
with exponential decay as $y\to +\infty$, and $D(t,\cdot) \in \mathcal S(\R)$. These estimates will be useful when computing (\ref{a03}).

\medskip

Let us conclude the proof of Proposition \ref{prop:decomp}. Using the decay on the right of $A_c$ (see (\ref{LI})), estimate (\ref{Rig}) is direct. In addition, from Lemma \ref{lem:omega} we have (\ref{Ac}), and  $f_1(t)$ and $f_2(t)$ are well determined by (\ref{f1})-(\ref{f2}). Finally, from (\ref{a09})-(\ref{a10}) one has (\ref{SIn0}). These facts prove Proposition \ref{prop:decomp}.

%
%
%\begin{rem}[Cubic case]
%In the special case $m=3$, the algebra of  functions involved in the linear problem (\ref{A10}) is well understood, and it can be computed explicitly. Indeed, from (\ref{ScaA}) one has 
%$$
%A_c(y) = \tilde A^0(\sqrt{c} y) + \frac{\la}{c} \hat A^0(\sqrt{c} y),
%$$
%with
%$$
%\tilde A^0(s) := \frac 12 (1-Q^2)\int_s^{+\infty} \!\!Q -\frac 1{12}y^2Q' -\frac 23 yQ +Q'\ln Q + \tilde \mu_0 Q',  
%$$
%and
%$$
%\hat A^0(s) := -\frac 12 (1-Q^2) \int_s^{+\infty} \!\! Q  + \frac 1{4}y^2Q' + \frac 12 yQ +Q'\ln Q + \hat \mu_0 Q'.  
%$$
%See Appendix \ref{AidQ} for the main ingredients of the proof of this result.  In particular, we have $\lim_{-\infty}A_c = \frac 12  (1-\frac \la c) \int_\R Q$, which is different from zero provided $c(t) \neq \la$. Finally, the constants $\tilde \mu_0$ and $\hat \mu_0$ are chosen such that 
%$$
%\int_\R yQ_c A_c(y) =0.
%$$
%\end{rem}

\end{proof}

The next results are similar to those proved in \cite{Mu2, Mu3}, but for the sake of completeness, we include them. Recall that $\tilde u$ does not belong to $L^2(\R)$. In order to solve this problem, consider a cut-off function $\eta \in C^\infty (\R)$ satisfying the following properties:
\be\label{eta}
\begin{cases}
0\leq \eta (s) \leq 1, \quad 0\leq  \eta' (s) \leq 1, \; \hbox{ for any } s\in \R;\\
\eta(s)\equiv 0 \; \hbox{ for } s\leq -1, \quad  \eta(s)\equiv 1 \; \hbox{ for } s\geq 1,
\end{cases}
\ee 
and define 
\be\label{etac}
\eta_\ve (y) := \eta( \ve y +  2 ),
\ee
and for $w=w( t, y)$ the first order correction constructed in Lemma \ref{lem:omega}, \emph{redefine}
\be\label{hatu}
\tilde u(t,x) :=  \eta_\ve (y) \tilde u(t,x) =\eta_\ve (y) (R(t,x) + w(t,x)),
\ee
and similarly for $R(t)$ and $w(t)$. Note that, by definition, 
\be\label{newb}
\tilde u(t, x) = 0 \quad \hbox{ for all } y\leq -\frac 3\ve.
\ee
The following Proposition deals with the error associated to this cut-off function, and the new approximate solution $\tilde u$.

\medskip

\begin{prop}\label{CV} There exist constants $\ve_0,\ga>0$ such that for all $0<\ve <\ve_0$ the following holds. 

\medskip

\begin{enumerate}

\item Consider the localized function $\tilde u(t) = R(t) +w(t)$ defined in (\ref{etac})-(\ref{hatu}), for $t$ in a given interval. Then we have

\begin{enumerate}
\item \emph{$L^2$-solution}. $w(t, \cdot ) \in H^1(\R)$, with
\be\label{H1}
\|w(t, \cdot ) \|_{H^1(\R)} \lss \sqrt{\ve} e^{-\ga \ve |\rho(t)|}.
\ee
\item \emph{Almost orthogonality}. 
\be\label{AO}
\abs{\int_\R w(t,x)Q_c(y)dx} + \abs{\int_\R yw(t,x)Q_c(y)dx} \lss \ve^{10}.
\ee
\end{enumerate}

\item \emph{Almost solution}. The error associated to the new function $\tilde u(t)$ satisfies
\bee
S[\tilde u]  &= & (c'(t) - \ve f_1(t) ) (\partial_c\tilde u + O_{H^1}(\ve a_0'(\ve\rho) |c_0-c|))   \\
&&  - (\rho'(t) -c(t)  -  \ve f_2(t) )  (\tilde u_y + O_{H^1}(\ve a_0'(\ve\rho) (|c_0-c| + |\rho_0-\rho|))) \\
& &+ \tilde S[\tilde u](t),
\eee
with 
\be\label{Stilde}
\| \tilde S[\tilde u](t) -\ve a_0'(\ve \rho(t)) (c_0(t)-c(t)) D_c(t) \|_{H^1(\R)} \lss \ve^{3/2}e^{-\ga \ve |\rho(t)|}.
\ee
Finally, one has (\ref{SIn0}).
%\be\label{SIn}
%\abs{\int_\R Q_c \tilde S[\tilde u]} +\abs{\int_\R yQ_c \tilde S[\tilde u]} \lss  \ve^2 e^{-\ve\ga|\rho(t)|} +  \ve^3.
%\ee
\end{enumerate}

\end{prop}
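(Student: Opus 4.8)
The plan is to treat the localized object $\tilde u = \eta_\ve(R+w)$ as a controlled perturbation of the non-localized approximate solution $v := R+w$ of Proposition \ref{prop:decomp}, confining all the error introduced by the cut-off $\eta_\ve$ to the transition zone $\{-3/\ve \le y \le -1/\ve\}$. The point is that on this zone $|y| \ge 1/\ve$, so the soliton $R = Q_c(y)$ and all its derivatives are super-exponentially small, $\lss e^{-\ga/\ve}$, while the first-order term $w = \ve d(t) A_c$ is merely bounded there but carries the small amplitude $d(t) = a_0'(\ve\rho(t))$; every cut-off derivative $\chi_t,\chi_x,\dots$ supplies an extra factor of $\ve$.

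For the $H^1$-bound \eqref{H1} on the redefined $w = \ve d(t)\eta_\ve A_c$, the crux is the interplay between the non-integrable left tail of $A_c$ and the smallness of $d(t)$. Since $A_c$ is bounded with $\lim_{-\infty}A_c = -2\sqrt c\,\beta_c \neq 0$ by \eqref{LI}, its mass after truncation is concentrated on a window of length $O(\ve^{-1})$ on which $A_c \sim \text{const}$, so $\|\eta_\ve A_c\|_{L^2(\R)} \lss \ve^{-1/2}$; meanwhile $A_c' \in \mathcal S(\R)$ and $\ve\,\eta'(\ve y + 2)A_c$ is $O(\sqrt\ve)$ in $L^2$. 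Combined with $|d(t)| \lss e^{-\ga\ve|\rho(t)|}$ from \eqref{ahyp} and the prefactor $\ve$, this gives $\|w\|_{L^2} \lss \sqrt\ve\, e^{-\ga\ve|\rho|}$ and $\|w_x\|_{L^2} \lss \ve\, e^{-\ga\ve|\rho|}$, hence \eqref{H1}. This $\sqrt\ve$ loss, forced by the non-decaying dispersive tail of $A_c$, is the source of the $\sqrt\ve$ in \eqref{MT1}. The almost-orthogonality \eqref{AO} is then immediate: using $\int_\R A_c Q_c = \int_\R y A_c Q_c = 0$ from \eqref{Or3}, the defect equals $\ve d\int_\R (1-\eta_\ve) A_c Q_c$ (resp.\ with weight $y$), and $1-\eta_\ve$ is supported on $\{y \le -1/\ve\}$ where $Q_c \lss e^{-\ga/\ve}$ while $A_c$ is bounded; the integral is thus smaller than any power of $\ve$, in particular $\lss \ve^{10}$.

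For the almost-solution statement I would expand $S[\eta_\ve v]$ by Leibniz. Writing $\chi = \eta_\ve$ one obtains $S[\chi v] = \chi S[v] + \mathcal C$ with
\[
\mathcal C = \chi_t v + \chi_{xxx}v + 3\chi_{xx}v_x + 3\chi_x v_{xx} + p\chi^{p-1}\chi_x v^p + (\chi^p-\chi)(v^p)_x,
\]
every term of which is supported in the transition zone. There the $R$-contributions are super-exponentially small, while the $w$-contributions carry $w = \ve d A_c$ together with at least one cut-off derivative of size $O(\ve)$; estimating on the window of length $O(\ve^{-1})$ yields $\|\mathcal C\|_{H^1(\R)} \lss \ve^{3/2}e^{-\ga\ve|\rho|}$, the dominant piece being $\chi_t w$. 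For $\chi S[v]$ I insert \eqref{Sdeco}: since $\chi$ is independent of $c$, one has $\chi\,\partial_c v = \partial_c\tilde u$ exactly, whereas $\chi\,\partial_y v = \tilde u_y - \chi_y v$ produces one further transition-zone remainder; the pieces of $\tilde S[v]$ entering through $\partial_t A_c$ in \eqref{Marca} that are proportional to $(c'-\ve f_1)$ and $(\rho'-c-\ve f_2)$ are reabsorbed into the coefficients of $\partial_c\tilde u$ and $\tilde u_y$, which is precisely the origin of the $O_{H^1}(\ve a_0'(\cdots))$ corrections in the statement. What remains of $\tilde S[v]$ is $\ve a_0'(c_0-c)D_c$ plus a remainder bounded by \eqref{Rig}; because $\tilde u$ vanishes for $y \le -3/\ve$ and the transition-zone contributions are $O(\ve^{3/2})$, the restriction $y > -2/\ve$ in \eqref{Rig} upgrades to all of $\R$, giving \eqref{Stilde}. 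Finally \eqref{SIn0} transfers from Proposition \ref{prop:decomp}: pairing $\tilde S[\tilde u]-\tilde S[v]$ and the truncation of $Q_c$ against $Q_c$ and $yQ_c$ costs only a negligible $e^{-\ga/\ve}$, since $Q_c$ is super-exponentially small on the support of $1-\eta_\ve$.

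The main obstacle is the bookkeeping in this last part: correctly sorting the many terms of $\mathcal C$ and of $\partial_t A_c$ into (i) coefficient corrections of order $\ve a_0'(|c_0-c|+|\rho_0-\rho|)$, (ii) the isolated term $\ve a_0'(c_0-c)D_c$, and (iii) the genuine $\ve^{3/2}e^{-\ga\ve|\rho|}$ remainder, while simultaneously verifying that the non-$L^2$ tails of $A_c$, $\partial_c A_c$ and $E_c$ never spoil the \emph{full-line} $H^1$ bound, because each such tail always appears multiplied either by a derivative of the cut-off or by the exponentially small amplitude $d(t)$.
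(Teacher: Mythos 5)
Your proposal is correct and follows essentially the same route as the paper: restrict the $H^1$ norm of $w$ to $\{y\geq -3/\ve\}$ where the bounded tail of $A_c$ occupies a window of length $O(\ve^{-1})$ (giving the $\sqrt{\ve}$, with the factor $e^{-\ga\ve|\rho|}$ coming from $d(t)=a_0'(\ve\rho)$), use $\supp(1-\eta_\ve)\subset\{y\leq -1/\ve\}$ together with \eqref{Or3} and the decay of $Q_c$ for \eqref{AO}, and expand $S[\eta_\ve\tilde u]=\eta_\ve S[\tilde u]+(\hbox{cut-off commutators})$ with all commutator terms confined to the transition zone and bounded by $\ve^{3/2}e^{-\ga\ve|\rho|}$ in $H^1$, before transferring \eqref{Sdeco}, \eqref{Rig} and \eqref{SIn0}. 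The bookkeeping you describe (reabsorbing the $\partial_tA_c$ pieces into the coefficients of $\partial_c\tilde u$ and $\tilde u_y$, isolating $\ve a_0'(c_0-c)D_c$) matches the paper's treatment around \eqref{Marca}.
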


\medskip

\begin{proof}
The proof of (\ref{H1}) follows from a direct computation. Indeed,
\[
\|w(t) \eta_\ve\|_{H^1(\R)} \lss \|w(t)\|_{H^1(y\geq -\frac 3\ve)} ,
\]
but from (\ref{Ac}),
\[
\| \ve d(t) A_c(y) \|_{H^1(y\geq -\frac 3\ve)}  \lss \sqrt{\ve}e^{-\ve\ga|\rho(t)|}.
\]
Let us now consider (\ref{AO}). Here we have, using (\ref{Ac}), 
\[
\int_\R w(t,x) \eta_\ve (y) Q_c(y) = \int_\R w(t,x) (\eta (\ve y +2)-1) Q_c(y). 
\]
Note that $\eta (\ve y +2)-1 \equiv 0$ for $y\geq -\frac 1\ve$. Using the exponential decay of $Q_c(y)$, we have
\bee
\abs{\int_\R w(t,x) \eta_\ve (y) Q_c(y)}  & \lss &  \int_{y\leq -\frac 2\ve}  \ve |y| e^{\sqrt{c} y} + \int_{y\in (-\frac 2\ve, -\frac 1\ve)}  \ve |y| e^{- \frac 12 (\ve y+2)} e^{\sqrt{c} y} \\
& \lss &  e^{-\ga/\ve} \ \lss \ve^{10}. 
\eee
The proof for $yA_c$ is very similar. We skip the details. For the proof of (\ref{Stilde}), we proceed as follows. First of all, a simple computation shows that
\[
S[\eta_\ve \tilde u]  =   \eta_\ve S[\tilde u] + (\eta_\ve)_t \tilde u + 3\ve \eta_\ve' \tilde u_{xx} + 3\ve^2 \eta_\ve^{(2)} \tilde u_{x} + \ve^{3} \eta_\ve^{(3)} \tilde u  + \ve (\eta_\ve^p)'  \tilde u^p.
\]
Since $\supp \eta_\ve^{(k)} \subseteq [-\frac 3\ve, -\frac 1\ve]$ for $k=1,2$ and $3$, we have
\[
3\ve \eta_\ve' \tilde u_{xx} + 3\ve^2 \eta_\ve^{(2)} \tilde u_{x} + \ve^{3} \eta_\ve^{(3)} \tilde u  + \ve (\eta_\ve^p)'  \tilde u^p =  O_{H^1(\R)} (\ve^{3/2} e^{-\ve\ga|\rho(t)|}) + O_{H^1(\R)}(\ve^{10}).
\]
Similarly, from the definition of $\rho'(t)$ and (\ref{r1})
\[
(\eta_\ve)_t \tilde u  =  -\rho'(t)\ve \eta_\ve' \tilde u  =   O_{H^1(\R)}(\ve^{3/2} e^{-\ve\ga|\rho(t)|}) + O_{H^1(\R)}(\ve^{10}).
\]
Collecting the terms above, we have
\[
S[\eta_\ve \tilde u]  =   \eta_\ve S[\tilde u]+ O_{H^1(\R)}(\ve^{3/2} e^{-\ve\ga|\rho(t)|}) + O_{H^1(\R)}(\ve^{10}).
\]
Finally, from the decomposition (\ref{Sdeco}), one has $S[\tilde u] = \hbox{\emph{dynamical system}} + \tilde S[\tilde u]$, and from (\ref{Stilde}), (\ref{defW}) and (\ref{Ac}),
\[
\| \eta_\ve \tilde S[\tilde u] \|_{H^1(\R)} \lss \ve^{3/2} e^{-\ve\ga|\rho(t)|} + \ve |c(t)-c_0(t)|  e^{-\ve\ga|\rho(t)|}+  \ve^3.
\]
Note that the proof of (\ref{SIn0}) does not vary at all. Finally, one has
\bee
 \eta_\ve \Big[  (c' - \ve f_1)\partial_c\tilde u    +\ (\rho' -c -  \ve f_2 ) \partial_\rho \tilde u \Big]   &  =&  (c' - \ve f_1)\partial_c(\eta_\ve\tilde u) -  (\rho' -c -  \ve f_2 ) \partial_y(\eta_\ve \tilde u) \\
& &   + \ \ve (\rho' -c-  \ve f_2 ) \eta_\ve ' \tilde u.
\eee
Since $\ve \eta_\ve ' \tilde u =O_{H^1(\R)}(\ve^{3/2} e^{-\ve\ga|\rho(t)|}) $, from this last estimate, we get the final conclusion. 
\end{proof}
%
%Coming back to the original equation (\ref{CP})-(\ref{aexp}), we have
%$$
% \tilde u_t + (\tilde u_{xx} + \tilde u^p)_x +\ve a_0'(\ve x) Q_{c_0}(x-\rho_0) \tilde u =  S[\tilde u] +\ve a_0'(\ve x) [Q_{c_0}(x-\rho_0) -Q_{c}(y)]\tilde u. 
%$$
%With a little abuse of notation, we redefine
%$$
% \tilde S[\tilde u] := \tilde S[\tilde u] +\ve a_0'(\ve x) [Q_{c_0}(x-\rho_0) -Q_{c}(y)]\tilde u. 
%$$
Finally, we recall that
\be\label{SSS}
\| \tilde S[\tilde u](t)  - \ve a_0'(\ve \rho(t)) (c_0(t)-c(t)) D_c(t,y) \|_{H^1(\R)} \lss \ve^{3/2} e^{-\ve\ga|\rho(t)|} .
\ee
with $|D_c(t,y) | + |\partial_xD_c(t,y) |\lss e^{- \ga_0|y|}$, for some fixed constant $\ga_0>0$.

\bigskip

\section{Lyapunov stability}\label{4} 
 
\medskip
 
In this section we prove the following 
 
\medskip

\begin{prop}\label{prop:I} The following holds for any $0<\ve <\ve_0$. There exist $K_0>0$ independent of $\ve$ and unique $C^1$ functions $c, \rho : [0, T] \to \R$ such that, for all $t\in [0, T]$,
\be\label{INT41}
\|u(t)-\tilde u(t,c(t),\rho(t)) \|_{H^1(\R)} \leq K_0 \sqrt{\ve},
\ee
\[
|c(t) -c_0(t)| +|\rho(t) -\rho_0(t)| \lss K_0 \sqrt{\ve}, 
\]
and
\bea\label{INT42}
& & | \rho'(t) -c(t)  -\ve f_2(t)| +\ \frac{1}{\sqrt{\ve}} |c'(t) - \ve f_1(t) | \lss K_0 \sqrt{\ve}.
\eea
Finally, one has
\be\label{crm}
|c(0) -1 | + |\rho(0) + \ve^{1+\delta_0} | \lss \sqrt{\ve},
\ee
with constants independent of $K_0$.
\end{prop}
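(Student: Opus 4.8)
The plan is to combine modulation theory with an almost-conserved Lyapunov functional in a bootstrap (continuity) argument, in the spirit of Martel--Merle \cite{MMcol1} and the virial technique of \cite{MMnon}. First I would set up the geometric decomposition: as long as $u(t)$ stays $H^1$-close to the approximate solution built in Section \ref{3}, I write
\be
u(t,x) = \tilde u(t,c(t),\rho(t)) + z(t,x),
\ee
and fix the two parameters $(c(t),\rho(t))$ by imposing the orthogonality conditions $\int_\R z(t)\,Q_{c(t)} = \int_\R z(t)\,\Lambda Q_{c(t)} = 0$ (compatible with the almost-orthogonality (\ref{Or3}) and (\ref{AO}) already built into $\tilde u$). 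The implicit function theorem, applied using the nondegeneracy $\int_\R Q_c\,\Lambda Q_c\neq 0$ valid for $p<5$, produces the \emph{unique} $C^1$ modulation parameters on a maximal subinterval of $[0,T]$, and evaluating the decomposition at $t=0$ against $u_0=Q$ yields the initial estimates (\ref{crm}). I would then organize the proof as a bootstrap: assume (\ref{INT41}) and the parameter bounds hold with constant $2K_0$ on a maximal interval $[0,T^\ast)$, prove they in fact hold with $K_0$, and conclude by continuity that $T^\ast=T$.

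Second, I would derive the modulation equations giving (\ref{INT42}). Differentiating the orthogonality conditions in time and inserting the equation for $z$ (obtained by subtracting $S[\tilde u]$ from the gKdV flow and using the decomposition (\ref{Sdeco})) produces a $2\times 2$ linear system for $c'(t)-\ve f_1(t)$ and $\rho'(t)-c(t)-\ve f_2(t)$, whose right-hand side is controlled by the source error (\ref{SSS}) and by quadratic-in-$z$ terms; the extra $\ve^{-1/2}$ gain on the scaling equation in (\ref{INT42}) comes from the projection against $Q_c$, which annihilates the leading-order drift. In parallel I would construct an almost-conserved Lyapunov functional $\mathcal F(t)$, of mixed energy--mass type modeled on $E[u]+c(t)M[u]$ and corrected by the modulated soliton, designed so that its leading part is the coercive quadratic form
\be
\mathcal F(t) \;\approx\; \tfrac12 \int_\R z\,\mathcal L z\,dx \;\gtrsim\; \|z(t)\|_{H^1}^2,
\ee
the coercivity modulo the two orthogonality directions following directly from the spectral description of $\mathcal L$ in Lemma \ref{surL} together with the subcriticality $p<5$ (which makes $\partial_c\!\int_\R Q_c^2>0$).

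Finally, the heart of the argument is to show that $\mathcal F$ has very small variation. Differentiating and using the mass/energy balance relations (\ref{dME}) together with the estimate (\ref{SSS}), I expect a bound of the form
\be
\Big| \tfrac{d}{dt}\mathcal F(t) \Big| \lss \ve^{3/2} e^{-\ve\ga|\rho(t)|}\,\|z(t)\|_{H^1} + \ve\, e^{-\ve\ga|\rho(t)|}\,\|z(t)\|_{H^1}^2 + (\cdots),
\ee
where the crucial structural fact is that the control $a=-\ve a_0'(\ve x)Q_{c_0}(x-\rho_0)$ is localized at the soliton and is therefore of size $\ve\,e^{-\ga_0\ve|\rho|}$ there, supplying the exponential weight to every $z$-dependent contribution. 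Since $\rho'(t)\approx c(t)\ge c_m>0$, I can change variables from $t$ to $\rho$ and integrate this weight,
\be
\int_0^T e^{-\ve\ga|\rho(t)|}\,dt \;\lss\; \frac{1}{c_m}\int_\R e^{-\ve\ga|s|}\,ds \;\lss\; \ve^{-1},
\ee
so that both terms above integrate to $O(\ve)$ over the whole interaction time $T\sim\ve^{-1-\delta_0}$; combined with $\mathcal F(0)\lss\ve$ this gives $\mathcal F(T)\lss\ve$, hence $\|z(T)\|_{H^1}\lss\sqrt{\ve}$, closing the bootstrap and proving (\ref{INT41}). The hard part, and the reason the sharp virial estimates of \cite{MMnon} are needed, is precisely the terms hidden in $(\cdots)$: the non-sign-definite contributions driven by $c'(t)$ and $\rho'(t)-c(t)$, and the interaction with the slowly decaying (infinite-mass) tail modeled by $A_c$, are not absorbed by the coercive quadratic part and must be bounded through a localized virial identity controlling weighted norms such as $\int_\R \varphi_c\,z^2$. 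The delicate point throughout is to ensure that every residual term genuinely carries the integrable factor $e^{-\ve\ga|\rho|}$, since over the long time $T$ any term lacking this weight would destroy the final $O(\sqrt{\ve})$ estimate.
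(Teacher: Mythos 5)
Your overall architecture --- bootstrap on a maximal interval, modulation by the implicit function theorem, a mixed energy--mass Lyapunov functional made coercive via Lemma \ref{surL}, a localized virial estimate to handle the non-sign-definite modulation terms, and integration of the weight $e^{-\ve\ga|\rho(t)|}$ using $\rho'\geq c_m>0$ --- is exactly the strategy of the paper (Lemmas \ref{DEFZ}, \ref{VL}, \ref{Ka} and the functional (\ref{F})). However, your choice of orthogonality conditions contains a genuine error. You impose $\int_\R zQ_{c}=\int_\R z\Lambda Q_{c}=0$; but both $Q_c$ and $\Lambda Q_c=\frac1c[\frac1{p-1}Q_c+\frac12 yQ_c']$ are \emph{even} functions of $y$, while the generator of translations $\partial_y\tilde u\approx -Q_c'$ is odd. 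Hence $\langle Q_c',Q_c\rangle=\langle Q_c',\Lambda Q_c\rangle=0$, the $2\times2$ Jacobian of your map $(c,\rho)\mapsto(\int_\R zQ_c,\int_\R z\Lambda Q_c)$ has an identically vanishing column in the $\rho$ direction, and the implicit function theorem does not determine $\rho(t)$; the nondegeneracy $\int_\R Q_c\Lambda Q_c\neq0$ that you invoke only pins down the scaling direction. The same parity obstruction destroys coercivity: $z=Q_c'$ satisfies both of your orthogonality conditions yet $\int_\R z\mathcal{L}z=0$, so $\mathcal F\gtrsim\|z\|_{H^1}^2$ fails. The paper instead uses $\int_\R z\,Q_c=\int_\R z\,yQ_c=0$ (see (\ref{defz})): $yQ_c$ is odd with $\langle Q_c',yQ_c\rangle=-\frac12\int_\R Q_c^2\neq0$, so the Jacobian is approximately diagonal and invertible for $p<5$, and these are also the conditions actually built into the corrector, cf.\ (\ref{Or3}) and (\ref{AO}) --- contrary to your claim, those identities express orthogonality to $Q_c$ and $yQ_c$, not to $\Lambda Q_c$.

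A secondary, more minor point: in your closing step the term $\ve e^{-\ve\ga|\rho|}\|z\|_{H^1}^2$ does not integrate to $O(\ve)$; with $\int_0^T\ve e^{-\ve\ga|\rho(s)|}ds\lss1$ and $\|z\|_{H^1}^2\lss (K^*)^2\ve$ it only integrates to $O((K^*)^2\ve)$, which is of the same size as the quantity you are trying to bound and cannot be absorbed by taking the bootstrap constant large. This is why the paper keeps $\int_{0}^t\ve e^{-\ga\ve|\rho(s)|}\|z(s)\|_{H^1}^2\,ds$ inside (\ref{IntF}) and closes with Gronwall's inequality (see (\ref{RMCF})) rather than by direct absorption. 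Note also that the integrated gain $\int_0^{T^*}|c'-\ve f_1|\lss K^*\ve$ is obtained by pairing the virial inequality (\ref{dereta}) with the pointwise bound (\ref{c1}); the factor $\ve^{-1/2}$ in (\ref{INT42}) comes from the extra weight $\ve e^{-\ga\ve|\rho|}$ multiplying the linear-in-$z$ term of (\ref{c1}), not merely from projecting onto $Q_c$.
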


A direct conclusion of the previous result is the following

\medskip

\begin{cor} 
Let $T_0>0$ be the maximal time of existence of $u(t)$. Then
\[
T_0 \geq  \ve^{-1-\delta_0}.
\]
\end{cor}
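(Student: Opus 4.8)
The plan is to read this corollary as the closing step of the bootstrap: Proposition \ref{prop:I} supplies an a priori $H^1$ bound valid on the whole interval $[0,T]$, and together with the blow-up alternative coming from the local Cauchy theory of Proposition \ref{Cau} it forces the maximal existence time $T_0$ to reach $\ve^{-1-\delta_0}$. First I would recall that, by definition \eqref{T}, $T=\min\{T_0,\ve^{-1-\delta_0}\}$, so it suffices to rule out the case $T_0<\ve^{-1-\delta_0}$, i.e. $T=T_0$.

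The key observation is that Proposition \ref{prop:I} yields a uniform $H^1$ bound on the exact solution. Indeed, the modulated approximate solution $\tilde u(t)=\eta_\ve(R(t)+w(t))$ is bounded in $H^1$ uniformly in $t$: the principal part $R(t)=Q_{c(t)}$ has $\|Q_{c(t)}\|_{H^1(\R)}\lss 1$ since $c(t)\in[c_m,c_M]$ by \eqref{ODE2} and the second estimate in Proposition \ref{prop:I}, while the correction obeys $\|w(t)\|_{H^1(\R)}\lss\sqrt{\ve}$ by \eqref{H1}. Combining this with \eqref{INT41} gives
\[
\sup_{t\in[0,T]}\|u(t)\|_{H^1(\R)}\le \sup_{t\in[0,T]}\big(\|\tilde u(t)\|_{H^1(\R)}+K_0\sqrt{\ve}\big)\lss 1,
\]
a bound independent of $t$ (and, since $K_0$ is $\ve$-independent, uniform in $\ve$ for $\ve$ small).

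With this in hand I would argue by contradiction. Suppose $T_0<\ve^{-1-\delta_0}$, so $T=T_0$ and the estimate above holds on $[0,T_0)$, applying \eqref{INT41} at each $t<T_0$ and letting $t\to T_0^-$. By the local well-posedness of Proposition \ref{Cau} the solution can be continued as long as its $H^1$ norm stays finite, so the only obstruction to extending past $T_0$ is $\|u(t)\|_{H^1(\R)}\to\infty$ as $t\to T_0^-$. Since the uniform bound just obtained precludes this, $T_0$ cannot be maximal, a contradiction. Hence $T_0\ge\ve^{-1-\delta_0}$, which in turn upgrades \eqref{T} to $T=\ve^{-1-\delta_0}$, so that all conclusions of Proposition \ref{prop:I} actually hold up to the desired final time.

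The corollary is therefore essentially immediate once Proposition \ref{prop:I} is granted; the substantive content lies in that proposition's bootstrap. The only genuine difficulty here is the mild technical point of passing from the closed-interval statement \eqref{INT41} to a bound on the half-open maximal interval $[0,T_0)$, which is handled by the limiting argument above together with the continuation criterion of the local theory; no new estimate is required.
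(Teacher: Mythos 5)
Your argument is correct and coincides with what the paper intends: the paper labels this corollary a ``direct conclusion'' of Proposition \ref{prop:I}, the implicit reasoning being exactly the combination of the uniform $H^1$ bound from \eqref{INT41} (plus the boundedness of $\tilde u$ in $H^1$) with the blow-up alternative of the local theory in Proposition \ref{Cau}. You have simply written out the details the paper leaves implicit.
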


\begin{rem}[Notation]
For the sake of brevity, in the forthcoming computations, we will denote 
\[
c_1' := c'-\ve f_1, \quad \hbox{ and } \quad \rho_1' := \rho' -c - \ve f_2.
\]
\end{rem}

\begin{proof}[Proof of Proposition \ref{prop:I}] Let $K^*>1$ be a constant to be fixed later. Since $u(0)= Q(x)$, by the local continuity in $H^1(\R)$ of the gKdV flow, there exists a time $T^*>0$  such that, for all $t\in [0, T^*]$,  we can find continuous functions $\la(t), r(t)\in \R$, such that 
\be\label{Tstar}
 \|u(t) -  \tilde u( \cdot \ , \la(t), r(t))\|_{H^1(\R)}\leq K^* \sqrt{\ve},
\ee
\be\label{Tstar1}
|\la(t)- c_0(t)| +| r(t) -\rho_0(t)| \lss  K^* \ve^{1/2(1-\delta_0)},% \|u(t) -  \tilde u( \cdot \ , \la(t), r(t))\|_{H^1(\R)}.
\ee
and 
\[
|\la(0) - 1| + |r(0) + \ve^{1+\delta_0}| \lss \sqrt{\ve},
\]
with a constant independent of $K^*$ large. Without loss of generality, we can assume $T^* = T^*(K^*)$ as the least upper bound of times such that the properties above are satisfied. The objective is to prove that we can take $T^*\geq  T$ for $K^*$ large enough, by proving a bootstrap estimate for suitable well chosen parameters $\la(t)$, $r(t)$. Our first step is to choose such parameters.

\medskip

\begin{lem}\label{DEFZ} Assume $0<\ve<\ve_0(K^*)$ small enough. There exist unique $C^1$ functions $c(t), \rho(t)$ such that, for all $t\in [0,T^*]$,
\be\label{defz}
z(t)=u(t)-\tilde u(t,c(t),\rho(t)) \; \hbox{satisfies}\; \int_\R z(t,x) y Q_c(y) dx  = \int_\R z(t,x) Q_c(y)dx=0.
\ee
Moreover, we have,  for all $t\in [0,T^*]$,
\be\label{TRANS3}
 \|z(0)\|_{H^1(\R)} + | c(0) -1 | + |\rho(0)+ \ve^{1+\delta_0}| \lss \sqrt{\ve}, 
\ee
\be\label{TRANS4}
  \|z(t)\|_{H^1(\R)}  \lss K^* \sqrt{\ve},  \qquad  |c(t) -c_0(t)| +|\rho(t) - \rho_0(t)| \lss K^* \sqrt{\ve}.
\ee
In addition, $z(t)$ satisfies the following equation
\be\label{Eqz1}
 z_t +  \big\{ z_{xx}  +  (\tilde u +z)^m - \tilde u^m  \big\}_x + \ve a_0'(\ve x)Q_{c_0} z  +  \tilde S[\tilde u]     + c_1'(t) \partial_c \tilde u  - \rho_1'(t) \partial_y \tilde u =0.
\ee
Finally, there exists $\ga>0$ independent of $K^*$ such that for every $t\in [0, T^*]$,
\be\label{rho1}
 |\rho_1'(t) |  \lss   \Big[\int_\R z^2 e^{- \ga\sqrt{c}|y|} \Big]^{1/2}  +  \int_\R e^{- \ga\sqrt{c}|y|}z^2(t) + \abs{\int_\R yQ_c \tilde S[\tilde u]},
\ee
and
\be\label{c1}
|c_1'(t) |\lss  \int_\R e^{-\ga\sqrt{c} |y|} z^2(t)  +  \ve e^{-\ga\ve|\rho(t)| } \Big[ \int_\R e^{-\ga\sqrt{c} |y|} z^2(t)\Big]^{1/2} + \abs{\int_\R Q_c \tilde S[\tilde u]}.
\ee
\end{lem}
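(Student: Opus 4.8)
The plan is to run the by-now standard modulation argument in four steps.

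\emph{Step 1: construction of the parameters.} For each fixed $t\in[0,T^*]$ I would consider the two-component map
\[
\Phi(c,\rho):=\Big(\textstyle\int_\R (u(t)-\tilde u(t,c,\rho))\,Q_c(\cdot-\rho),\ \int_\R (u(t)-\tilde u(t,c,\rho))\,(\cdot-\rho)Q_c(\cdot-\rho)\Big).
\]
At $(c,\rho)=(c_0(t),\rho_0(t))$ the factor $u(t)-\tilde u$ is $O(\sqrt\ve)$ in $H^1(\R)$ by the a priori bound \eqref{Tstar}, while a direct computation shows the Jacobian $\partial_{(c,\rho)}\Phi$ is, to leading order, diagonal with entries $-\int_\R\Lambda Q_c\,Q_c$ and $-\tfrac12\int_\R Q_c^2$, the leading off-diagonal terms (namely $\int_\R Q_c'\,Q_c=0$ and $\int_\R (\cdot)\,\Lambda Q_c\,Q_c=0$) vanishing by parity. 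Since $p\neq5$ gives $\int_\R\Lambda Q_c Q_c=\tfrac12\tfrac{d}{dc}\|Q_c\|_{L^2(\R)}^2\neq0$, this matrix is uniformly invertible for $c\in[c_m,c_M]$, and $w$ together with the $O(\sqrt\ve)$ factor $u-\tilde u$ perturb it by only $O(\sqrt\ve)$. The implicit function theorem then produces a unique nearby zero $(c(t),\rho(t))$, depending on $t$ in a $C^1$ way by the regularity of the flow and of $\tilde u$; this is \eqref{defz}. The initial bound \eqref{TRANS3} comes from $u(0)=Q$, $c_0(0)=1$ and $\|w\|_{H^1(\R)}\lesssim\sqrt\ve$, and \eqref{TRANS4} from \eqref{Tstar}--\eqref{Tstar1} and the continuity of the projection.

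\emph{Step 2: the equation for $z$.} Since $a=-\ve a_0'(\ve x)Q_{c_0}(x-\rho_0)$, the exact solution satisfies $S[u]=0$, whereas Proposition \ref{prop:decomp} gives $S[\tilde u]=c_1'\,\partial_c\tilde u-\rho_1'\,\partial_y\tilde u+\tilde S[\tilde u]$ with $c_1':=c'-\ve f_1$, $\rho_1':=\rho'-c-\ve f_2$. Writing $z=u-\tilde u$ and using
\[
S[u]-S[\tilde u]=z_t+\{z_{xx}+(\tilde u+z)^p-\tilde u^p\}_x+\ve a_0'(\ve x)Q_{c_0}\,z
\]
yields \eqref{Eqz1} at once.

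\emph{Step 3: the modulation equations.} I would differentiate the two orthogonality conditions in \eqref{defz} in time, using $\partial_t[Q_c(x-\rho)]=c'\Lambda Q_c-\rho'Q_c'$, and substitute $z_t$ from \eqref{Eqz1}, integrating the dispersive and nonlinear fluxes by parts. This produces a $2\times2$ linear system $M(c_1',\rho_1')^{\!\top}=\mathrm{RHS}$ whose matrix $M$ is the one of Step 1 up to $O(\ve)$, hence uniformly invertible. The right-hand side collects: the quadratic remainder of $(\tilde u+z)^p-\tilde u^p$, bounded by $\int_\R e^{-\ga\sqrt c|y|}z^2$ using the localization of $Q_c$; the linear-in-$z$ terms from the linearized operator, from the potential $\ve a_0'Q_{c_0}$ and from $w$; and the projections $\int_\R Q_c\tilde S[\tilde u]$, $\int_\R yQ_c\tilde S[\tilde u]$ of the error, controlled by \eqref{SIn0}. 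Inverting $M$ gives \eqref{rho1} and \eqref{c1}.

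\emph{Main obstacle.} The delicate point is the linear-in-$z$ contribution in Step 3, and specifically the fact that the $c_1'$ equation must yield a purely \emph{quadratic} leading term (plus the small remainder $\ve e^{-\ga\ve|\rho|}[\int_\R e^{-\ga\sqrt c|y|}z^2]^{1/2}$), rather than the $[\int_\R e^{-\ga\sqrt c|y|}z^2]^{1/2}$ term permitted for $\rho_1'$. This improvement is not automatic: it relies on the orthogonality \eqref{Or}, $\int_\R F_1 Q_c=0$, which is exactly how $f_1$ was fixed, so that the leading linear coupling of $z$ against $\partial_c\tilde u$ in the $Q_c$-projection cancels. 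Tracking which terms are genuinely $O(\ve)$ (through the factor $e^{-\ga\ve|\rho|}$ that measures the distance of the soliton from the support of $a_0'$) against those that are merely localized is where the care is needed; the remaining integration by parts and the uniform bound on $M^{-1}$ are routine.
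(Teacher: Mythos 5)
Your proposal follows essentially the same route as the paper: the paper likewise obtains (\ref{defz})--(\ref{TRANS4}) from the Implicit Function Theorem applied to the very map $J=(J_1,J_2)$ you write down, treats (\ref{Eqz1}) as the direct computation $0=S[u]=S[\tilde u]+(S[u]-S[\tilde u])$, and for (\ref{rho1})--(\ref{c1}) simply refers to \cite{Mu3} for the standard projection argument that your Step 3 sketches correctly. One small correction to your closing remark: the purely quadratic leading term in (\ref{c1}) comes from $\mathcal L Q_c'=0$ (so that $\int_\R(\mathcal L z)_y\,Q_c=-\int_\R z\,\mathcal L Q_c'=0$) together with the cancellation of $c\int_\R zQ_c'$ against $\rho'\int_\R zQ_c'$ up to $\rho_1'+\ve f_2$, whereas the orthogonality (\ref{Or}) fixing $f_1$ is what makes the source term $\int_\R Q_c\tilde S[\tilde u]$ of size $O(\ve^2)$ as in (\ref{SIn0}) -- both mechanisms are needed, but they act on different terms.
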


\begin{proof}
The proof of (\ref{defz})-(\ref{TRANS3}) is a standard consequence of the Implicit Function Theorem, applied for each time $t\in [0, T^*]$. Indeed, for fixed $t\in [0,T]$, let us define the map 
\[
(v,c,\rho) \in H^1(\R) \times \R_+\times \R  \longmapsto J(v,c,\rho) \in \R^2,
\]
where $J =(J_1,J_2)$ and
\bee
J_1(v,c,\rho) & :=&   \int_\R (v-\tilde u(t,c,\rho))(x-\rho)Q_c(x-\rho)dx,\\
 J_2(v,c,\rho) & :=&   \int_\R (v-\tilde u(t,c,\rho))Q_c(x-\rho)dx.
\eee
It is clear that 
\[
J_1( \tilde u(t,c_0(t),\rho_0(t)),c_0(t),\rho_0(t)) = J_2( \tilde u(t,c_0(t),\rho_0(t)),c_0(t),\rho_0(t)) \equiv 0.
\]
Moreover, the respective Jacobian determinant of $J$ with respect to the variables $(c,\rho)$ is nonzero everywhere. Therefore, from the Implicit Function Theorem, there exists a small $\eta_0(t)>0$ (which can be chosen continuous on $t$) such that, for all $v \in H^1(\R)$ satisfying $\|v - \tilde u(t,c_0(t),\rho_0(t)) \|_{H^1(\R)} < \eta_0$, there is a smooth pair of parameters $(c(v),\rho(v)) \in \R^2$, satisfying $J(v,c(v),\rho(v)) \equiv 0.$ Since the interval $[0,T^*]$ is compact, we can ensure $\eta_0>0$ independent of $t$. 

\medskip

Note that from (\ref{Tstar})-(\ref{Tstar1}), the function $u(t)$ satisfies, for each $t\in [0,T]$,
\be\label{Cond0}
\|u(t) - \tilde u(t,c_0(t),\rho_0(t)) \|_{H^1(\R)} \lss K_0\sqrt{\ve} \ll \eta_0,
\ee
provided $\ve_0=\ve_0(\eta_0)$ is chosen even smaller. Therefore there exists a smooth pair of parameters $(c(u(t)),\rho(u(t))) =:(c(t),\rho(t)) \in \R^2$, such that $J(u(t),c(t),\rho(t)) \equiv 0.$ This proves (\ref{defz}).

\medskip

The proof of (\ref{TRANS3}) is direct from the initial condition $u_0(x)=Q(x)$, and (\ref{r1}). Finally, (\ref{TRANS4}) follows from  (\ref{Cond0}) and the fact that $J(u(t),c(t),\rho(t)) \equiv 0.$

\medskip

On the other hand, (\ref{Eqz1}) is a direct computation. For the proof of (\ref{rho1}) and (\ref{c1}), see e.g. \cite{Mu3}. 
\end{proof}
From (\ref{SIn0}) and (\ref{TRANS3}), a crude estimate of the parameter $|c_1'(t)|$ gives $|c_1'(t)| \lesssim \ve$, so that
\[
\int_0^{T^*} |c_1'(t)|dt \lesssim \ve^{-\delta_0},
\]
which is not a good estimate. In the following lines, we prove a sharp Virial estimate \cite{MMnon,Mu3} which allows to improve the preceding bound.

\medskip

\begin{center}
\line(1,0){50}
\end{center}

\medskip
 
First of all, we define some auxiliary functions. Let $\phi \in C^\infty(\R)$ be an \emph{even} function satisfying the following properties
\be\label{psip}
\begin{cases}
\phi' \leq 0 \; \hbox{ on } [0, +\infty); \quad  \phi (x) =1 \; \hbox{ on } [0,1], \\
\phi (x) = e^{-x}  \; \hbox{ on } [2, +\infty) \quad\hbox{and}\quad  e^{-x} \leq \phi (x) \leq 3e^{-x}  \; \hbox{ on } [0,+\infty).
\end{cases}
\ee
Now, set $\psi(x) := \int_0^x \phi $. It is clear that $\psi$ an odd function. Moreover, for $|x|\geq 2$,
\be\label{asy}
\psi(+\infty) -\psi (|x|) = e^{-|x|}.%, \quad \psi(x) + \psi(+\infty) =e^x.
\ee
Finally, for $A>0$, denote 
\be\label{psiA}
\psi_A(x) := A\psi(\frac xA); \quad e^{-|x|/A} \leq \psi_A'(x)   \leq 3e^{-|x|/A}. 
\ee
We claim the following 

\medskip

\begin{lem}\label{VL} There exist $K, A_0, \delta_0>0$  such that for all $t\in [0, T^*]$,
\be\label{dereta}
 \partial_t \int_\R  z^2(t,x) \psi_{A_0}(y)   \leq   -\delta_0  \int_\R ( z_x^2 + z^2 )(t,x) e^{-\frac 1{A_0} |y|}  + KA_0 (K^*)^{p+1} \ve^{5/2}.
\ee
\end{lem}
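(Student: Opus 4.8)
The goal is a Virial-type monotonicity estimate for the weighted $L^2$-norm $\int_\R z^2 \psi_{A_0}(y)$, where $z$ solves the perturbation equation (\ref{Eqz1}) and $\psi_{A_0}$ is the smoothed, slowly growing weight from (\ref{psiA}). The plan is to differentiate this quantity in time, substitute the evolution equation (\ref{Eqz1}) for $z_t$, and integrate by parts to expose a coercive negative quadratic term, absorbing all remaining contributions into either the coercive term or the $\ve^{5/2}$ error.

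First I would compute $\partial_t \int_\R z^2\psi_{A_0}(y)$. Since $y=x-\rho(t)$, the weight carries a drift term $-\rho'(t)\,\partial_y$; together with $2\int z\,z_t\,\psi_{A_0}$, I would substitute $z_t$ from (\ref{Eqz1}), namely
\[
z_t = -\big\{ z_{xx} + (\tilde u+z)^m - \tilde u^m\big\}_x - \ve a_0'(\ve x)Q_{c_0}z - \tilde S[\tilde u] - c_1'\partial_c\tilde u + \rho_1'\partial_y\tilde u.
\]
The dominant term comes from the linear dispersive part $-(z_{xx})_x$: after two integrations by parts against $z\psi_{A_0}$ one obtains the characteristic Virial structure $-3\int z_x^2\,\psi_{A_0}' + \tfrac12\int z^2\,\psi_{A_0}'''$, and since $\psi_{A_0}'\sim e^{-|y|/A_0}$ dominates the lower-order $\psi_{A_0}'''\sim A_0^{-2}\psi_{A_0}'$, choosing $A_0$ large makes this combine into a negative definite term of the form $-\delta_0\int(z_x^2+z^2)e^{-|y|/A_0}$. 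The drift $-\rho'\int z^2\psi_{A_0}'$ has a good sign since $\rho'\sim c>0$ and $\psi_{A_0}'>0$, so it helps rather than hurts.

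The remaining terms must each be controlled. The nonlinear term $\{(\tilde u+z)^m-\tilde u^m\}_x$ expands as $(mR^{m-1}z)_x + $ higher order in $z$; the linear-in-$z$ piece combines with $\ve a_0'Q_{c_0}z$ and must be handled by a \emph{coercivity} argument — here is where I expect the main obstacle. One uses the orthogonality conditions (\ref{defz}), $\int z Q_c = \int z\,yQ_c = 0$, together with the spectral gap for $\mathcal L$ (Lemma \ref{surL}) to show the quadratic form controls $\int(z_x^2+z^2)e^{-\ga\sqrt c|y|}$ from below, so that any negative contributions from the potential terms get absorbed into $-\delta_0\int(z_x^2+z^2)e^{-|y|/A_0}$ for $\ve$ small (the potential $\ve a_0'Q_{c_0}$ is $O(\ve)$ and localized). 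The error source $\tilde S[\tilde u]$ is estimated by Cauchy--Schwarz against $z\psi_{A_0}'$: using (\ref{SSS}) and $\|z\|_{H^1}\lss K^*\sqrt\ve$ from (\ref{TRANS4}), together with the exponential localization of the leading part of $\tilde S$, one gets a contribution $\lss A_0(K^*)^{p+1}\ve^{5/2}$ after Young's inequality splits off any piece that feeds the coercive term. The modulation terms $c_1'\partial_c\tilde u$ and $\rho_1'\partial_y\tilde u$ are controlled using the bounds (\ref{rho1})--(\ref{c1}) on $|c_1'|,|\rho_1'|$, which are themselves quadratically small in $z$, again absorbable. The cubic and higher terms in $z$ are bounded using $\|z\|_{L^\infty}\lss\|z\|_{H^1}\lss K^*\sqrt\ve$, contributing the $(K^*)^{p+1}$ dependence. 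Collecting everything and choosing $A_0$ large then $\ve_0$ small yields (\ref{dereta}).

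The hard part is the coercivity: ensuring the weighted quadratic form $-3\int z_x^2\psi_{A_0}' + \int mR^{m-1}zz_x\psi_{A_0}+\ldots$ is genuinely negative definite on the orthogonal complement of $\{Q_c,yQ_c\}$ uniformly in $\ve$. This requires the localized coercivity estimates for $\mathcal L$ (as in \cite{MMnon,MMcol1}), and care that the weight $\psi_{A_0}'$, which is only \emph{slowly} decaying compared to $e^{-\ga\sqrt c|y|}$, does not destroy the spectral lower bound; this is precisely why $A_0$ must be taken large but fixed before sending $\ve\to0$.
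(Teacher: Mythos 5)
Your proposal is correct and follows essentially the same route as the paper: differentiate the weighted norm, substitute (\ref{Eqz1}), integrate by parts to produce the virial quadratic form $-\int_\R \psi_{A_0}'(3z_x^2+cz^2-pQ_c^{p-1}z^2)+\dots$, invoke its localized positivity under the orthogonality conditions (\ref{defz}) (which the paper, like you, delegates to Appendix B of \cite{MMnon}), and absorb the nonlinear, modulation and $\tilde S[\tilde u]$ contributions via (\ref{rho1})--(\ref{c1}), (\ref{SSS}) and the bootstrap bound $\|z\|_{H^1(\R)}\lss K^*\sqrt{\ve}$. The only cosmetic difference is that the paper estimates the $O(\ve)$ potential term $\ve\int_\R a_0'(\ve x)Q_{c_0}z^2\psi_{A_0}$ directly as an absorbable $O(\ve A_0\int z^2 e^{-|y|/A_0})$ rather than folding it into the coercivity step.
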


\begin{proof} Let $t\in [0, T^*]$. Replacing the value of $ z_t$ given by (\ref{Eqz1}),  we have
\bea
 \partial_t \int_\R  z^2 \psi_{A_0}(y)  & = &   2\int_\R  z  z_t  \psi_{A_0}(y) -\rho'(t) \int_\R  z^2 \psi_{A_0}'(y) \nonumber \\
&  = &  - 2\ve \int_\R a_0'(\ve x) Q_{c_0} z^2 \psi_{A_0}(y)  + 2\int_\R ( z   \psi_{A_0}(y) )_x (  z_{xx}+  p  R^{p-1}  z )  \label{e0} \\
%& & + 2 m \int_\R ( z   \psi_{A_0}(y) )_x  (a_\ve-2) R^{m-1}  z  \label{e0a}\\
& &    -(c +\ve f_2)(t)\int_\R  z^2 \psi_{A_0}' - 2\rho'_1(t)\int_\R  z \partial_\rho \tilde u \psi_{A_0}\label{e1} \\
& &   + 2\int_\R ( z   \psi_{A_0}(y) )_x   [(\tilde u +  z)^p -\tilde u^p -p\tilde u^{p-1} z ] \label{e2}\\
& &  - 2c_1'(t)\int_\R  z \partial_c\tilde u \psi_{A_0}    - \rho_1'(t) \int_\R  z^2 \psi_{A_0}' \label{e3}\\
%& & + \frac {1}{\tilde a_{\infty}} \int_\R ( z \psi_{A_0}(y))_x (2- a_\ve ) Q_{c_2}^m. \label{e4} 
& &   + 2p\int_\R z( z   \psi_{A_0}(y) )_x  (\tilde u^{p-1} - R^{p-1}) -2\int_\R z \psi_{A_0}\tilde S[\tilde u] \label{e4}.
\eea
The first term in (\ref{e0}) can be estimated as follows:
\[
\abs{\ve \int_\R a_0'(\ve x)Q_{c_0} z^2 \psi_{A_0}(y)} \lss\ve A_0 \int_\R z^2(t)e^{-\frac 1{A_0}|y|} .
\]
On the other hand, note that
\bee
|(\ref{e2})| & \lss & \abs{ \int_\R  z_x   \psi_{A_0}(y)   [(\tilde u +  z)^p -\tilde u^p -p \tilde u^{p-1} z ]}  \\
& & \quad  +  \abs{\int_\R     \psi_{A_0}'(y) z [(\tilde u +  z)^p -\tilde u^p -p\tilde u^{p-1} z ] } \\
& \lss &   A_0 K^*\ve^{1/2}\int_\R   z^2 (t)e^{-\ga\sqrt{c}|y|} + K^*\ve^{1/2} \int_\R   z^2(t)e^{-\frac 1{A_0}|y|}  + \abs{\int_\R z^{p+1}  \psi_{A_0}'(y) }\\
& \lss & K^*A_0 \ve^{1/2}\int_\R   z^2(t)e^{-\frac 1{A_0}|y|} + A_0 \ve \|z(t)\|_{H^1(\R)}^{p+1} \\
& \lss & K^*A_0 \ve^{1/2}\int_\R   z^2(t)e^{-\frac 1{A_0}|y|} +  (K^*)^{p+1}A_0 \ve^{(p+3)/2}.
\eee
for $A_0$ large, but independent of $\ve$.
Now, by using (\ref{rho1}) and (\ref{c1})  it is easy to check that for $A_0$ large enough, and some constants $\delta_0, \ve_0$ small, one has
\bee
 |(\ref{e3})|  &  \lss &   |c_1'(t)| \abs{\int_\R  z \partial_c\tilde u \psi_{A_0} }+  K^* \ve^{1/2} \int_\R  z^2(t) e^{-\frac 1{A_0}|y|} \\
 & \leq &  \frac{\delta_0}{100}\int_\R z^2(t)e^{-\frac 1{A_0}|y|} + KK^* A_0\ve^{5/2} e^{-\ve\ga|\rho(t)|}.
\eee
On the other hand, the terms (\ref{e0}) and (\ref{e1}) goes similarly to the terms $B_1$ and $B_2$ in Appendix B of \cite{MMnon}. Indeed, we have
\bee
(\ref{e0}) + (\ref{e1}) & =& -\int_\R \psi_{A_0}' (  3z_x^2  + c z^2 -  pQ_c^{p-1}  z^2 ) - p\int_\R (Q_c^{p-1})' z^2 \psi_{A_0}  \\
%%& & + 2 m \int_\R ( z   \psi_{A_0}(y) )_x  (a_\ve-2) R^{m-1}  z  \label{e0a}\\
& &   + \int_\R  z^2 \psi_{A_0}^{(3)} - 2\rho_1'(t) \int_\R  z \partial_\rho \tilde u\psi_{A_0}  \\
& &  +2p\int_\R (z\psi_{A_0})_x z (R^{p-1} -Q_c^{p-1}) -\ve f_2 \int_\R z^2 \psi_{A_0}' .
\eee
%(\ref{e0}) + (\ref{e1}) & =& 2\int_\R ( z   \psi_{A_0}(y) )_x (  z_{xx} -\la z+  mQ_c^{m-1}  z ) + O(\ve A_0 e^{-\ve\ga|\rho(t)|} \|z(t)\|_{L^2(\R)}^2) \\
%%& & + 2 m \int_\R ( z   \psi_{A_0}(y) )_x  (a_\ve-2) R^{m-1}  z  \label{e0a}\\
%& &   -(c(t)-\la)\int_\R  z^2 \psi_{A_0}' + 2(c(t)-\la-\rho'(t)) \frac{1}{\tilde a}\int_\R  z Q_c' \psi_{A_0}.
%\eee
We finally get, taking $\ve$ small, depending on $A_0$,
\[
(\ref{e0}) + (\ref{e1})\leq  -\frac{\delta_0}{10}\int_\R ( z_x^2 +  z^2)(t)e^{-\frac 1{A_0}|y|}.
\]
Finally, the term (\ref{e4}) can be estimated as follows
\bee
|(\ref{e4})| & \lss &  \abs{\int_\R z( z   \psi_{A_0}(y) )_x (\tilde u^{p-1} -  R^{p-1}) } + \abs{\int_\R z \psi_{A_0}\tilde S[\tilde u]}\\
& \lss & \abs{\int_\R z^2   \psi'_{A_0}(y) (\tilde u^{p-1} - R^{p-1}) }   \\
& & \quad +  \abs{\int_\R z z_x   \psi_{A_0}(y)  (\tilde u^{p-1} - R^{p-1}) } +   A_0 (K^*)^2 \ve^{5/2} e^{-\ve \ga|\rho(t)|} \\
& \lss &  A_0 \ve \int_\R (z^2(t) + z_x^2(t)) e^{-\frac 1{A_0} |y|}  +  A_0 (K^*)^2 \ve^{5/2} e^{-\ve \ga|\rho(t)|}.
\eee
We have used that $\psi_{A_0}$ decreases exponentially as $y\to -\infty$, and (\ref{SSS}). Collecting these estimates, we finally get (\ref{dereta}).
\end{proof}

\medskip

\begin{cor} One has, from (\ref{c1}) and (\ref{dereta}),
\be\label{intc1}
\int_{0}^{t} |c'_1(s)| ds  \lesssim K^* \ve,
\ee
for all $t\in [0, T^*]$, by taking $A_0$ large enough, independent of $\ve$ and $K^*$.
\end{cor}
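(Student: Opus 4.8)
The plan is to obtain the time-integrated bound by feeding the virial inequality (\ref{dereta}) into the pointwise bound (\ref{c1}) for $c_1'$ and integrating in $t$. Integrating (\ref{c1}) over $[0,t]$ produces three types of contributions: a localized quadratic term $\int_0^t\!\int_\R e^{-\ga\sqrt c|y|}z^2\,ds$, a mixed term $\int_0^t \ve e^{-\ve\ga|\rho|}\big(\int_\R e^{-\ga\sqrt c|y|}z^2\big)^{1/2}ds$, and the source term $\int_0^t \abs{\int_\R Q_c\tilde S[\tilde u]}\,ds$. I will control each using the virial functional and the linear growth of $\rho$.

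The core estimate is the bound on the space-time local norm $N(t):=\int_0^t\!\int_\R z^2 e^{-|y|/A_0}\,ds$. First I fix $A_0$ large but independent of $\ve$ and $K^*$, so that simultaneously Lemma~\ref{VL} applies and $e^{-\ga\sqrt c|y|}\le e^{-|y|/A_0}$ holds (possible since $c\ge c_m>0$). Integrating (\ref{dereta}) in time and rearranging gives
\be\label{virint}
\delta_0\, N(t)\le \int_\R z^2(0)\psi_{A_0} -\int_\R z^2(t)\psi_{A_0} + KA_0(K^*)^{p+1}\ve^{5/2}\,t .
\ee
Since $\psi_{A_0}$ is bounded by $A_0\psi(+\infty)$, the boundary terms are controlled (up to a bounded power of $K^*$, which is immaterial for the subsequent bootstrap) by $\|z(0)\|_{L^2}^2\lesssim\ve$ and $\|z(t)\|_{L^2}^2\lesssim \ve$ from (\ref{TRANS3})--(\ref{TRANS4}); and because $t\le T\le \ve^{-1-\delta_0}$ with $\delta_0<\tfrac12$, the last term is $\lesssim A_0(K^*)^{p+1}\ve^{3/2-\delta_0}\ll K^*\ve$ for $\ve$ small (with $\ve_0$ allowed to depend on $K^*$). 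Hence $N(t)\lesssim K^*\ve$, which already disposes of the localized quadratic term.

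For the mixed term I apply Cauchy--Schwarz in $s$ together with the decay estimate $\int_0^t e^{-2\ve\ga|\rho(s)|}ds\lesssim \ve^{-1}$; the latter follows from the change of variables $u=\rho(s)$, using $\rho'\ge c_m>0$ from (\ref{r1}), so the total exponential mass in time is $O(\ve^{-1})$. This yields a bound $\lesssim \ve\cdot\ve^{-1/2}\cdot N(t)^{1/2}\lesssim K^*\ve$. For the source term I insert (\ref{SIn0}): the $\ve^2 e^{-\ve\ga|\rho|}$ piece integrates to $\lesssim \ve$ by the same $O(\ve^{-1})$ decay bound, the constant piece $\ve^3$ gives $\lesssim \ve^3 t\lesssim \ve^{2-\delta_0}$, and the most delicate piece $\ve|c-c_0|e^{-\ve\ga|\rho|}$ is handled with $|c-c_0|\lesssim K^*\sqrt\ve$ from (\ref{TRANS4}) and absorbed using the same time-decay. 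Collecting the three contributions gives (\ref{intc1}).

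The step I expect to be the main obstacle is keeping every contribution strictly at the target order $K^*\ve$ despite the very long interaction time $t\sim\ve^{-1-\delta_0}$. The virial source in (\ref{virint}) is multiplied by $t$, so it is essential that Lemma~\ref{VL} delivers a quintic-in-$\sqrt\ve$ gain (the power $\ve^{5/2}$) rather than merely $\ve^2$, which is precisely what forces the restriction $\delta_0<\tfrac12$. Equally delicate is that the time integral $\int_0^t e^{-\ve\ga|\rho|}ds$ costs a full power of $\ve^{-1}$; this must be paid for by the quadratic (or Cauchy--Schwarz) smallness of the remaining factors, and is the reason the crude $O(\ve^{-\delta_0})$ bound is upgraded to $O(\ve)$ only after the virial control of $N(t)$ is in hand.
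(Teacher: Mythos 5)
Your strategy --- integrate the virial inequality (\ref{dereta}) in time to control $N(t)=\int_0^t\int_\R z^2e^{-|y|/A_0}\,ds$, then feed this into (\ref{c1}) term by term --- is precisely what the corollary's phrasing (``from (\ref{c1}) and (\ref{dereta}), \dots by taking $A_0$ large enough'') points to; the paper gives no further detail, so this is the intended route. Your treatment of the first two contributions is sound: the localized quadratic term is bounded by $N(t)\lesssim A_0(K^*)^2\ve$ (and you are right both to bound the boundary terms $\pm\int_\R z^2\psi_{A_0}$ by $A_0\|z\|_{L^2}^2$ rather than discard them by sign, since $\psi_{A_0}$ is odd, and to note that the stray powers of $K^*$ are harmless because every such term carries an extra positive power of $\ve$ or is absorbed in the final bootstrap); the mixed term closes by Cauchy--Schwarz in time together with $\int_0^t e^{-2\ve\ga|\rho(s)|}ds\lesssim\ve^{-1}$, which you correctly justify via $\rho'\ge c_m>0$.

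The gap is in the source term. From (\ref{SIn0}) the piece $\ve|c-c_0|e^{-\ve\ga|\rho|}$, combined with the only available bound $|c-c_0|\lesssim K^*\sqrt\ve$ from (\ref{TRANS4}) and the time-decay $\int_0^t e^{-\ve\ga|\rho|}ds\lesssim\ve^{-1}$, yields $\ve\cdot K^*\sqrt\ve\cdot\ve^{-1}=K^*\sqrt\ve$, not $K^*\ve$; your claim that this piece is ``absorbed using the same time-decay'' does not survive the arithmetic. No Cauchy--Schwarz gain is available here because, unlike the analogous term (\ref{a03}) in the proof of Lemma \ref{Ka}, this contribution carries no factor of $z$ against which the virial estimate could be played. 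A Gronwall coupling of $|c-c_0|$ to $\int_0^t|c_1'|$ does not rescue it either, since already the initial discrepancy $|c(0)-c_0(0)|\lesssim\sqrt\ve$ from (\ref{TRANS3}) feeds back a contribution of size $\sqrt\ve$. As written, your argument therefore proves $\int_0^t|c_1'|\lesssim K^*\sqrt\ve$; reaching the stated $K^*\ve$ requires an extra ingredient, e.g.\ a cancellation $\int_\R Q_cD_c=0$ in the leading part of $\int_\R Q_c\tilde S[\tilde u]$, or a pointwise bound on $|c-c_0|$ better than $\sqrt\ve$ in the region where $e^{-\ve\ga|\rho|}$ is not small. (The weaker bound $K^*\sqrt\ve$ would in fact still suffice for the only downstream use of (\ref{intc1}), in Lemma \ref{Ka}, but it does not establish the corollary as stated.)
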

% In other words, we improve the estimate on the integral of $|c_1'(t)|$ (a crude integration of (\ref{c1}) gives $\int_{0}^t |c'_1(s) ds|  \leq K\ve^{-\frac 1{100}}$.)

\medskip

The main part of the proof is the introduction of the following Lyapunov functional (\cite{MMcol1,Mu2}): Let
\be\label{F}
\mathcal F(t) := \frac 12 \int_\R (z_x^2 +c(t) z^2) - \frac{1}{p+1}\int_\R  [(\tilde u+ z)^{p+1} -\tilde u^{p+1} - (p+1)\tilde u^{p}z]. %-\frac 13 \int_\R a_\ve z^3.
\ee
From \cite{MMcol1} and the fact that $p<5$, there exists a constant, independent of $K^*$ and $\ve$ such that for every $t\in [0, T^*]$
\be\label{Coer2}
\mathcal F(t) \gtrsim \|z(t)\|_{H^1(\R)}^2 . 
\ee
The next step is to obtain independent estimates on $\mathcal F(t)$.  We follow \cite{Mu3}. It is not difficult to check that 
\bea\label{Fp}
\mathcal F'(t) & =&     -\int_\R z_t [ z_{xx} -c z +   (\tilde u+z)^p -\tilde u^p ] + \frac 12 c'(t)\int_\R z^2  \nonu\\
& & -\int_\R  \tilde u_t[ (\tilde u+z)^p -\tilde u^p -p\tilde u^{p-1}z]. 
\eea

\begin{lem}\label{Ka} There exists a constant $\ga>0$ such that, for any $t\in [0, T^*]$,
\bea\label{IntF}
\mathcal F(t) -\mathcal F(0) & \lss &  (K^*)^4 \ve^{2-\frac 1{100}}  + (K^*)^3 \ve^{\frac 32-\frac 1{100}} \nonu\\
&& + K^* \ve   + \int_{0}^t \ve e^{-\ve\ga |\rho(s)|} \|z(s)\|_{H^1(\R)}^2 ds . 
\eea
\end{lem}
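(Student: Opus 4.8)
The plan is to start from the exact expression \eqref{Fp} for $\mathcal F'(t)$ and substitute the evolution equation \eqref{Eqz1}, replacing every $z_t$ by $-\big\{z_{xx}+(\tilde u+z)^p-\tilde u^p\big\}_x-\ve a_0'(\ve x)Q_{c_0}z-\tilde S[\tilde u]-c_1'(t)\partial_c\tilde u+\rho_1'(t)\partial_y\tilde u$. Since $\mathcal F$ is the Weinstein-type functional adapted to the modulated soliton, the genuinely dispersive contribution should organize itself into exact $x$-derivatives (which integrate to zero) plus terms that are quadratic in $z$ and spatially localized by the soliton profile; only the control, the error $\tilde S$, and the non-vanishing of $c_1'$, $\rho_1'$ break the would-be conservation of $\mathcal F$.

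The key algebraic observation I would isolate first is the following. Writing $P:=z_{xx}+(\tilde u+z)^p-\tilde u^p$, the leading piece $-\int z_t(P-cz)$ contains $\int P_x(P-cz)=-c\int P_x z=c\int Pz_x$, and an integration by parts in the nonlinearity gives $c\int Pz_x=-c\int\big[(\tilde u+z)^p-\tilde u^p-p\tilde u^{p-1}z\big]\tilde u_x$. Combined with the last summand $-\int\tilde u_t\big[(\tilde u+z)^p-\tilde u^p-p\tilde u^{p-1}z\big]$ of \eqref{Fp} and the expansion $\tilde u_t=c'\partial_c\tilde u-\rho'\tilde u_y+(\text{explicit }t\text{-terms})$, the two most dangerous contributions merge into $(\rho'-c)\int\tilde u_y\big[(\tilde u+z)^p-\tilde u^p-p\tilde u^{p-1}z\big]=(\rho_1'+\ve f_2)\int\tilde u_y[\cdots]$, whose integrand is $O(z^2)$ and localized, hence $\lesssim(|\rho_1'|+\ve e^{-\ve\ga|\rho|})\int z^2 e^{-\ga|y|}$. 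The residual $c'\partial_c\tilde u$ and the explicit $\partial_t A_c$ terms pair with the same $O(z^2)$ factor and are of equal or better order.

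Next I would bound the remaining source pairings one by one, always integrating by parts so that at most one derivative ever falls on $z$ (only $\|z\|_{H^1(\R)}\lesssim K^*\sqrt\ve$ is available from \eqref{TRANS4}). The control term gives $\ve\int a_0'(\ve x)Q_{c_0}z(P-cz)\lesssim\ve e^{-\ve\ga|\rho|}\|z\|_{H^1(\R)}^2$; the error term gives, through \eqref{SSS}, $\int\tilde S(P-cz)\lesssim\big(\ve^{3/2}e^{-\ve\ga|\rho|}+\ve|c-c_0|e^{-\ve\ga|\rho|}\big)\|z\|_{H^1(\R)}$; and the modulation pairings are controlled by \eqref{rho1}, \eqref{c1}, the almost-orthogonality \eqref{defz} and \eqref{SIn0}, yielding bounds of the form $|c_1'|\|z\|_{H^1(\R)}$ and $|\rho_1'|\int z^2 e^{-\ga|y|}$. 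Finally $\frac12 c'\int z^2$ splits through $c'=c_1'+\ve f_1$ into a $|c_1'|\|z\|^2$ part and an $\ve e^{-\ve\ga|\rho|}\|z\|^2$ part.

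The last and decisive step is the time integration over $[0,t]$. Three inputs drive it: first, $\int_0^T\ve e^{-\ve\ga|\rho(s)|}\,ds=O(1)$, because $\rho'\ge c_m>0$ lets one change variables $ds\lesssim d\rho$, which turns the control, error, and $\ve f_1$ contributions into the displayed integral term $\int_0^t\ve e^{-\ve\ga|\rho(s)|}\|z\|_{H^1(\R)}^2\,ds$ and into the $K^*\ve$ term; second, the improved bound $\int_0^t|c_1'|\lesssim K^*\ve$ of \eqref{intc1}; and third, the virial estimate \eqref{dereta}, integrated in time, which yields $\int_0^t\int z^2 e^{-\ga|y|}\lesssim(K^*)^2\ve$ and so converts the $\rho_1'$- and $c_1'$-pairings (each quadratic in the localized $L^2$-norm) into the powers $(K^*)^3\ve^{3/2}$ and $(K^*)^4\ve^2$; the small loss $\ve^{-1/100}$ absorbs the $\delta_0$-dependent factors coming from $T\sim\ve^{-1-\delta_0}$. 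The main obstacle is exactly this bootstrap coupling: the modulation bounds \eqref{rho1}, \eqref{c1} are only usable because the virial functional controls $\int_0^t\int z^2 e^{-\ga|y|}$, and one must keep the derivative count at $H^1$ throughout; the delicate $\ve^{-1}d(t)\partial_t A_c$ term flagged after Lemma~\ref{lem:dSKdVw} enters $\tilde u_t$ and must be handled via the decomposition \eqref{Marca}, so that its dangerous part is reabsorbed into the modulation system rather than estimated crudely.
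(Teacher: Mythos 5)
Your proposal follows essentially the same route as the paper: substitute \eqref{Eqz1} into \eqref{Fp}, exploit the algebraic cancellation between $c\int[(\tilde u+z)^p-\tilde u^p]z_x$, the modulation pairings, and the $-\int\tilde u_t[\cdots]$ term so that everything surviving is either quadratic and localized in $z$ or carries a small explicit factor, and then integrate in time using $\int_0^t|c_1'|\lesssim K^*\ve$ from \eqref{intc1}, the virial bound \eqref{dereta}, and $\int_0^t\ve e^{-\ve\ga|\rho(s)|}ds\lesssim 1$ from $\rho'\geq c_m>0$. The exponents you obtain, $(K^*)^3\ve^{3/2}$ and $(K^*)^4\ve^2$ up to the $\ve^{-1/100}$ loss absorbing the $\delta_0$-dependence, match the paper's conclusion, so the argument is correct and not materially different.
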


\begin{proof}
Replacing (\ref{Eqz1}) in (\ref{Fp}) we get
%\bea\label{Eqz1}
%& & z_t + z_{xxx}  + \big\{a_\ve [ (\tilde u +z)^m - \tilde u^m ] \big\}_x \nonumber  \\ 
%& & \quad  +\big\{ [a(\ve(x+\rho_0))  - a_\ve] (\tilde u+z)^m \big\}_x -  \rho_0' (\tilde u+z)_x + S[\tilde u] =0.
%\eea
\bea
& & \mathcal F'(t) =\nonu\\
&  &=  \ve \int_\R a_0'(\ve x) Q_{c_0} z [ z_{xx} -c z +   (\tilde u+z)^p -\tilde u^p ]   + c(t)  \int_\R  [ (\tilde u+z)^p -\tilde u^p ]  z_x \label{Fp1} \\
& &\quad - \rho_1'(t)  \int_\R \partial_y \tilde u [z_{xx} -cz + (\tilde u+z)^p -\tilde u^p ] \label{Fp2} \\
& &\quad + c_1'(t) \int_\R \partial_c \tilde u [ z_{xx} -cz +  (\tilde u+z)^p -\tilde u^p  ]\label{Fp2b} \\
& &\quad +  \int_\R \tilde S[\tilde u] [ z_{xx} -cz +  (\tilde u+z)^p -\tilde u^p ]  + \frac 12 c_1'(t) \int_\R z^2+\frac 12 \ve f_1(t) \int_\R z^2 \label{Fp3b} \\
& &\quad -\int_\R  \tilde u_t [ (\tilde u+z)^p -\tilde u^p -p\tilde u^{p-1}z]. \label{Fp4}
\eea 
We consider the case $p=2$, the other cases being similar (see \cite{Mu2} for more details). First of all, note that 
\[
\abs{ \ve \int_\R a_0'(\ve x) Q_{c_0} z [ z_{xx} -c z +   (\tilde u+z)^p -\tilde u^p ]  } \lss \ve e^{-\ga \ve |\rho_0(t)|} \|z(t)\|_{H^1(\R)}^2.
\]
Next, after some simplifications, we get
\[
 c(t)  \int_\R  [ (\tilde u+z)^p -\tilde u^p ]  z_x    =  c(t)  \int_\R  [2\tilde u z + z^2 ] z_x  =  -c(t) \int_\R   \tilde u_x z^2   .
\]
Now we estimate (\ref{Fp2}). Since $\partial_y \tilde u =Q_c' + O(\eta_\ve w_y) + O_{H^1(\R)}(\ve^{3/2} e^{-\ve \ga |\rho(t)|})$ (cf. Proposition \ref{CV}), one has 
\bea\label{a01}
(\ref{Fp2})  & = &  -\rho'_1(t) \int_\R \partial_y \tilde u [z_{xx} -cz +   2\tilde u z + z^2  ]  \nonu\\
& = & - \rho'_1(t) \int_\R \tilde u_x   z^2 + O(\ve e^{-\ve\ga |\rho(t)|} \|z(t)\|^2_{L^2(\R)}).
\eea
Similarly, we have from (\ref{defz})
\bea\label{a02}
(\ref{Fp2b})  & = &  c_1' (t) \int_\R \partial_c \tilde u [ z_{xx} -cz +  2\tilde u z + z^2  ]  =   c_1' (t) \int_\R  \partial_c \tilde u   z^2 \nonu\\
& &  \qquad + O(\ve e^{-\ve\ga |\rho(t)|} \|z(t)\|^2_{L^2(\R)}   + \ve^{1/2} |c_1'| e^{-\ve\ga |\rho(t)|} \|z(t)\|_{L^2(\R)}).
\eea
On the one hand, we have from (\ref{SSS}) and (\ref{TRANS4}),
\bea\label{a03}
& & \abs{  \int_\R \tilde S[\tilde u] \big\{ z_{xx} -cz + 2\tilde u z + z^2  \big\}  }  \lss  \nonu\\
& & \qquad \lss \abs{  \int_\R\partial_x \tilde S[\tilde u] z_{x} } +  (1+K^* \ve^{1/2})\abs{\int_\R \tilde S[\tilde u] z}  +  \abs{\int_\R \tilde S[\tilde u] \tilde u z } \nonu \\
& & \qquad \lss   \abs{  \int_\R\partial_x \tilde S[\tilde u] z_{x} } +   (1+K^* \ve^{1/2})\abs{\int_\R \tilde S[\tilde u] z} \nonu\\
& & \qquad  \lss K^*\ve^{2}e^{-\ve\ga|\rho(t)|} + (K^*)^2 \ve^{3/2} e^{-\ve\ga|\rho(t)|} \Big[\int_\R e^{-\ga_0|y|} z^2\Big]^{1/2} .
\eea
Concerning the second and third terms in (\ref{Fp3b}), 
\[
\frac 12 c_1'(t) \int_\R z^2 +  \frac 12 \ve f_1(t) \int_\R z^2  \lesssim  (|c_1'(t)| +\ve e^{-\ve\ga|\rho(t)|}) \|z(t)\|_{L^2(\R)}^2.
\]
Finally, 
\bea\label{a04}
(\ref{Fp4})  & =&       - \int_\R  (\tilde u_t + \rho' \tilde u_x  - c' \partial_c \tilde u) z^2 +  \rho'\int_\R  \tilde u_x z^2 \nonu\\
& &  -c'\int_\R  \partial_c \tilde u z^2 +O(\ve e^{-\ve\ga|\rho(t)|} \|z(t)\|_{L^2(\R)}^2).
\eea
We obtain
\bee
\mathcal F'(t) & \lesssim  & |c_1'(t)| \|z(t)\|_{L^2(\R)}^2 +  \ve e^{-\ga\ve |\rho(t)|} \|z(t)\|^2_{L^2(\R)} + \ve\|z(t)\|_{H^1(\R)}^3 + K^*\ve^{2}e^{-\ve\ga|\rho(t)|} \\
& & \qquad  + \ (K^*)^2 \ve^{3/2} e^{-\ve\ga|\rho(t)|} \Big[\int_\R e^{-\ga_0|y|} z^2\Big]^{1/2}.
\eee
Using (\ref{intc1}), we finally get after integration in time (here we use the condition $t\leq T^* \lss \ve^{-1-2\delta_0}$)
\bee
\mathcal F(t) -\mathcal F(0) & \lesssim & (K^*)^3 \ve^{\frac 32-\delta_0}  + (K^* \ve)^2 +  \int_{0}^t  \ve e^{-\ga\ve |\rho(s)|} \|z(s)\|_{H^1(\R)}^2 ds \\
& & \qquad + \ (K^*)^2 \ve^{3/2}  \int_0^t  e^{-\ve\ga|\rho(s)|} \Big[\int_\R e^{-\ga_0|y|} z^2\Big]^{1/2}ds.
\eee
Note that thanks to the Cauchy-Schwarz inequality and Lemma \ref{VL}, we have 
\[
(K^*)^2 \ve^{3/2}  \int_0^t  e^{-\ve\ga|\rho(s)|} \Big[\int_\R e^{-\ga_0|y|} z^2\Big]^{1/2}ds \lss (K^*)^2 \ve^{3/2},
\]
for $\ve$ small (depending on $K^*$). Indeed, we just need to justify that $\abs{\int_{0}^{t} \ve e^{-\ga\ve |\rho(s)|} ds} \lss 1$, independent of $\ve$ and $K^*$. It is not difficult to see that the estimate above  holds since $\rho'(s) \geq  \frac{9}{10}c(s) \geq \frac{8}{10} \min \{c_f,1\}>0 $.   Therefore
\[
\mathcal F(t) -\mathcal F(0)  \lesssim  (K^*)^3 \ve^{\frac 32-\delta_0}  + (K^* \ve)^2 +  \int_{0}^t  \ve e^{-\ga\ve |\rho(s)|} \|z(s)\|_{H^1(\R)}^2 ds, 
\]
as desired. The cases $p=3$ and $4$ are similar.
\end{proof}
We are finally in position to improve (\ref{Tstar}). Indeed, since from Lemma \ref{DEFZ}, $\mathcal F(0) \lss  \ve,$ using (\ref{Coer2}) and Lemma (\ref{IntF}) we get
\bee
\|z(t)\|_{L^2(\R)}^2   & \lss &   \ve +  (K^*)^4 \ve^{2-\delta_0} + (K^*)^3 \ve^{\frac 32-\delta_0}  + (K^*)^2\ve^2 \\
& &   +  \int_{0}^t  \ve e^{-\ga\ve |\rho(s)|} \|z(s)\|_{H^1(\R)}^2 ds.
\eee
Now, by Gronwall's inequality (see e.g. \cite{Mu2} for a detailed proof),
\be\label{RMCF}
\|z(t)\|_{H^1(\R)}^2 \lss \ve  + (K^*)^3 \ve^{\frac 32-\delta_0}.
\ee
with constant independent of $K^*$ and $\ve$. 

\medskip

Let us come back to the main proof. From estimate (\ref{RMCF}), and taking $\ve$ small, and $K^*$ large enough, we obtain that for all $t\in [0, T^*]$,
\be\label{KKa}
\|z(t)\|_{H^1(\R)}^2 \leq  \frac 14 (K^*)^2 \ve.
\ee
Therefore, we improve the estimate on $z(t)$ (\ref{TRANS4}), and therefore (\ref{Tstar})-(\ref{Tstar1}) are also improved. The proof of Proposition \ref{prop:I} is complete.
\end{proof}

\bigskip

\section{Proof of the Main Theorems}\label{5}

\medskip

We are now in position to give a direct proof of Theorem \ref{MT}. The proof is very similar to the corresponding proof of Lemma \ref{ODE0}. Indeed, we have (\ref{INT41}) for all time $t\in [0, T]$; in particular, at $t=T$ one has
\be\label{Conc1}
\|u(T)  -\tilde u(T, c(T), \rho(T)) \|_{H^1(\R)} \lss \sqrt{\ve}.
\ee
Note that, from (\ref{c1}) and (\ref{rho1}), since $c(t)>c_m$ by (\ref{r1}),
\bee
c^{-\frac1{p-1}}(t) c'(t) & =&  -\ve \la_p a_0'(\ve \rho(t)) c(t)  + c^{-\frac1{p-1}}(t) c_1'(t) \\
& =& -\ve \la_p a_0'(\ve \rho(t)) \rho'(t) +  \ve^2 \la_p a_0'(\ve \rho(t)) f_2(t)  \\
& & + \ve \la_p a_0'(\ve \rho(t)) \rho_1'(t) + c^{-\frac1{p-1}}(t) c_1'(t).
\eee
Hence, if $p=2$, and using (\ref{intc1}),
\[
\log c(t) -\log c(0) = -\la_2 [a_0(\ve \rho(t)) - a_0(\ve \rho(0))]  + O(\sqrt{\ve}),
\]
from which we obtain
\be\label{C2}
c(t) = e^{-\la_2 a_0(\ve \rho(t))}(1+O(\sqrt{\ve})), \quad p=2.
\ee
Similarly, if $p=3$ or $4$,
\be\label{Cp}
c(t) = \Big[  1-  \la_p \frac{(p-2)}{p-1} a_0(\ve \rho(t)) \Big]^{\frac{p-1}{p-2}} (1+O(\sqrt{\ve})).
\ee
Now, we perform a detailed asymptotic analysis of $(c(t), \rho(t))$. First of all, note that from (\ref{C2})-(\ref{Cp}), $c(t)$ is strictly positive for all time. Indeed, the case $p=2$ is direct, and for $p=3$ or $p=4$, we have $c(t) \geq 1 -K\sqrt{\ve}$ in the case where $a_\infty$ is negative (or $c_f>1$, see (\ref{ainf}) and (\ref{ahyp})), and
\[
c(t) \geq \Big[  1-  \la_p \frac{(p-2)}{p-1} \|a_0\|_\infty \Big]^{\frac{p-1}{p-2}}  - K \sqrt{\ve},
\]
for the case $a_\infty>0$ ($c_f<1$). From (\ref{Linf}) we have $\|a_0\|_{\infty} = a_\infty= \frac{(p-1)}{\la_p(p-2)}(1-c_f^{\frac{p-2}{p-1}})$. Therefore,
\[
c(t) \geq c_f - K\sqrt{\ve}>0.
\]
We conclude that, for $\ve>0$ small, $c(t) \geq \frac{99}{100}\min\{ 1,c_f\}>0.$ Hence $\rho(t)$ is increasing and $\rho(t) -\rho(0) \geq  \frac{99}{100} \min\{1,c_f\} t $,
which implies that $ \rho(T) \gtrsim T$. Moreover, from (\ref{ahyp}), $a(\ve \rho(T)) = a_\infty + O(\ve^{10})$. Taking $t=T$ in (\ref{C2}) and (\ref{Cp}), we have
\[
c(T) = e^{ -\la_2 a_\infty} (1+O(\sqrt{\ve})) = c_f (1+O(\sqrt{\ve})), \quad p=2,
\]
and
\[
c(T) = \Big[  1-  \la_p \frac{(p-2)}{p-1} a_\infty \Big]^{\frac{p-1}{p-2}} (1+O(\sqrt{\ve}))= c_f(1+O(\sqrt{\ve})), \quad p=3,4,
\]
as desired. Finally, from (\ref{defW}), one has
\[
\| \tilde u(T, c(T), \rho(T))  - Q_{c_f} (\cdot - \rho(T)) \|_{H^1(\R)} \lss \sqrt{\ve}.
\] 
Using (\ref{Conc1}) and the triangle inequality, the first estimate in (\ref{MT1}) follows. Concerning the second one, it is a consequence of (\ref{rho1}).

\medskip

Finally, Corollary \ref{Cor2} is just a consequence of the behavior of $c(t)$ in (\ref{C2}), (\ref{Cp}) and (\ref{ahyp}).

\bigskip


\begin{thebibliography}{99}


\bibitem{Benj} T. B. Benjamin, \emph{The stability of solitary waves}, Proc. Roy. Soc. London A \textbf{328}, (1972) 153--183. 

\bibitem{BM} K. Beauchard and M. Mirrahimi, \emph{Practical stabilization of a quantum particle in a one-dimensional infinite square potential well}, SIAM J. Control Optim., \textbf{48} (2009), no. 2, p. 1179--1205.

\bibitem{BC} K. Beauchard and J.M. Coron, \emph{Controllability of a quantum particle in a moving potential well}, J. Funct. Anal. \textbf{232} (2006), pp. 328--389.

\bibitem{BSS} J. L. Bona, P. Souganidis, and W. Strauss,  \emph{Stability and instability of solitary waves of Korteweg-de Vries type}, Proc. Roy. Soc. London \textbf{411} (1987), 395--412.

\bibitem{BZ} J. L. Bona, and  B.Y. Zhang, \emph{The initial-value problem for the forced Korteweg-de Vries equation},  Proc. Roy. Soc. Edinburgh Sect. A \textbf{126} (1996), no. 3, 571--598.

\bibitem{CC} J.-M. Coron and E. Cr\'epeau, \emph{Exact boundary controllability of a nonlinear KdV equation with critical lengths}, J. Eur. Math. Soc., \textbf{6} (2004), pp. 367--398.

\bibitem{C} E. Cerpa, \emph{Exact controllability of a nonlinear Korteweg-de Vries equation on a critical spatial domain}, SIAM J. Control Optim., \textbf{46} (2007), 877--899.

\bibitem{C1} E. Cerpa and E. Cr\'epeau, \emph{Boundary controllability for the nonlinear Korteweg-de Vries equation on any critical domain}, Ann. Inst. H. Poincar\'e Anal. Non Lin\'eaire, \textbf{26} (2009), pp. 457--475.

\bibitem{Cre} E. Crepeau, \emph{Exact controllability of the Korteweg-de Vries equation around a non-trivial stationary solution}, Internat. J. Control, \textbf{74} (2001), pp. 1096--1106.

\bibitem{Gr1} R. Grimshaw, \emph{Slowly varying solitary waves. I. Korteweg-de Vries equation}, Proc. Roy. Soc. London Ser. A \textbf{368} (1979), 359-376.

\bibitem{H} J. Holmer, \emph{Dynamics of KdV solitons in the presence of a slowly varying potential}, Int. Math. Res. Not. IMRN \textbf{2011}, no. 23, 5367--5397.

\bibitem{HPZ} J. Holmer, G. Perelman, and M. Zworski, \emph{Effective dynamics of double solitons for perturbed mKdV}, Comm. Math. Phys. \textbf{305} (2011), no. 2, 363--425. 

\bibitem{KN1} D. J. Kaup, and A. C. Newell, \emph{Solitons as particles, oscillators, and slowly changing media: a singular perturbation theory}, Proc. Roy. Soc. London Ser. A \textbf{361} (1978), 413--446.

\bibitem{KPV} C. E. Kenig, G. Ponce, and L. Vega, \emph{Well-posedness and scattering results for the generalized Korteweg--de Vries equation via the contraction principle}, Comm. Pure Appl. Math. \textbf{46}, (1993) 527--620. 

\bibitem{KK} K. Ko and H. H. Kuehl, \emph{Korteweg-de Vries soliton in a slowly varying medium}, Phys. Rev. Lett. \textbf{40} (1978), no. 4, 233--236.

\bibitem{LT} H. Lange, and H. Teismann, \emph{Controllability of the nonlinear Schr\"odinger equation in the vicinity of the ground state}, Math. Meth. Appl. Sci. \textbf{30} (2007) 1483--1505.

\bibitem{LRZ} C. Laurent, L. Rosier, and B.Y. Zhang, \emph{Control and stabilization of the Korteweg-de Vries equation on a periodic domain}, Comm. Partial Differential Equations \textbf{35} (2010), no. 4, 707--744.

\bibitem{Lo} P. Lochak, \emph{On the adiabatic stability of solitons and the matching of conservation laws}, J. Math. Phys. \textbf{25} (1984), no. 8, 2472--2476.

\bibitem{LP0} F. Linares, and A. Pazoto, \emph{On the exponential decay of the critical generalized Korteweg-de Vries with localized damping}, Proc. Amer. Math. Soc., \textbf{135} (2007), pp.  1515--1522.

\bibitem{LP} F. Linares, and A. Pazoto, \emph{Asymptotic behavior of the Korteweg-de Vries equation posed in a quarter plane}, J. Differential Equations \textbf{246} (2009), no. 4, 1342--1353.

\bibitem{MMblow} Y. Martel, and F. Merle, \emph{Blow up in finite time and dynamics of blow up solutions for the $L^2$-critical generalized KdV equation}, J. Amer. Math. Soc. \textbf{15} (2002), no. 3, 617--664.

\bibitem{MMnon} Y. Martel and F. Merle, \emph{Asymptotic stability of solitons of the subcritical gKdV equations revisited}, Nonlinearity \textbf{18} (2005) 55--80.

\bibitem{MMcol1} Y. Martel and F. Merle, \emph{Description of two soliton collision for the quartic gKdV equations}, Ann. of Math. (2) \textbf{174} (2011), no. 2, 757--857.

\bibitem{MMcol3} Y. Martel and  F. Merle, \emph{Inelastic interaction of nearly equal solitons for the quartic gKdV equation}, Invent. Math. \textbf{183} (2011), no. 3, 563--648.

\bibitem{MMP} C. P. Massarolo, G. P. Menzala, and A. Pazoto, \emph{On the uniform decay for the Korteweg-de Vries equation with weak damping}, Math. Methods Appl. Sci. \textbf{30} (2007), no. 12, 1419--1435.

\bibitem{MV} F. Merle, and L. Vega, \emph{$L^2$ stability of solitons for the KdV equation}, Int. Math. Res. Notices \textbf{2003}, no. 13, 735--753.

\bibitem{Mi} M. Mirrahimi, \emph{Lyapunov control of a quantum particle in a decaying potential}, Ann. I.H.P. (c) Nonlinear Analysis, \textbf{26} (2009), pp. 1743--1765.

\bibitem{Mu2} C. Mu\~noz, \emph{On the soliton dynamics under slowly varying medium for generalized Korteweg- de Vries equations}, Anal. PDE \textbf{4} (2011), no. 4, 573--638.

\bibitem{Mu3} C. Mu\~noz, \emph{Dynamics of soliton-like solutions for slowly varying, generalized gKdV equations: refraction vs. reflection},  SIAM J. Math. Anal. \textbf{44} (2012), no. 1, 1--60.

\bibitem{PR}  A. Pazoto, L. Rosier, \emph{Uniform stabilization in weighted Sobolev spaces for the KdV equation posed on the half-line}, Discrete Contin. Dyn. Syst. Ser. B \textbf{14} (2010), no. 4, 1511--1535.

\bibitem{PSV}  A. Pazoto, M. Sep\'ulveda, and O. Vera Villagr\'an, \emph{Uniform stabilization of numerical schemes for the critical generalized Korteweg-de Vries equation with damping}, Numer. Math. \textbf{116} (2010), no. 2, 317--356. 

\bibitem{QL} Q. Lin, \emph{Soliton dynamics for a non-hamiltonian perturbation of mKdV}, Differential Integral Equations \textbf{26} (2013), no. 1-2, 81--104.

\bibitem{R} L. Rosier, \emph{Exact boundary controllability for the Korteweg-de Vries equation on a bounded domain}, ESAIM Control Optim. Calc. Var.,  \textbf{2} (1997), pp. 33--55.

\bibitem{R1} L. Rosier, \emph{Exact boundary controllability for the linear Korteweg-de Vries equation on the half-line}, SIAM J. Control Optim. \textbf{39} (2000), no. 2, 331--351.

\bibitem{RZ0} L. Rosier, and B.Y. Zhang, \emph{Global stabilization of the generalized Korteweg-de Vries equation posed on a finite domain},  SIAM J. Control Optim. \textbf{45} (2006), no. 3, 927--956. 

\bibitem{RZ1} D. L. Russell and B.Y. Zhang, \emph{Controllability and stabilizability of the third-order linear dispersion equation on a periodic domain}, SIAM J. Control Optim., \textbf{31} (1993), pp. 659--676.

\bibitem{RZ} D. L. Russell, and B.Y. Zhang, \emph{Exact controllability and stabilizability of the Korteweg-de Vries equation}, Trans. Amer. Math. Soc. \textbf{348} (1996), no. 9, 3643--3672.

\bibitem{RZ2} D. L. Russell and B.Y. Zhang, \emph{Smoothing and decay properties of the Korteweg-de Vries equation on a periodic domain with point dissipation}, J. Math. Anal. Appl., \textbf{190} (1995), pp. 449--488.

\bibitem{Sun} S. M. Sun, \emph{The Korteweg-de Vries equation on a periodic domain with singular-point dissipation}, SIAM J. Control Optim., \textbf{34} (1996), pp. 892--912.

\bibitem{We1} M.I. Weinstein, \emph{Modulational stability of ground states of nonlinear Schr\"odinger equations},  SIAM J. Math. Anal.  \textbf{16}  (1985),  no. 3, 472--491.


\bibitem{BYZ} B. Y. Zhang, \emph{Exact boundary controllability of the Korteweg-de Vries equation}, SIAM J. Control Optim. \textbf{37} (1999), no. 2, 543--565.

\end{thebibliography}
\end{document}